\newcommand{\8}{\infty}
\newcommand{\Z}{\mathbb{Z}}
\newcommand{\id}{\mathbf{1}}
\newcommand{\diag}{\operatorname{diag}}
\newcommand{\Den}{\operatorname{Den}}
\newcommand{\SL}{\operatorname{SL}}
\newcommand{\GL}{\operatorname{GL}}
\newcommand{\HH}{\mathbb{H}}
\newcommand{\R}{\mathbb{R}}
\newcommand{\A}{\mathbb{A}}
\newcommand{\Q}{\mathbb{Q}}
\newcommand{\C}{\mathbb{C}}
\newcommand{\PP}{\mathbb{P}}
\newcommand{\afrak}{\mathfrak{a}}
\newcommand{\bfrak}{\mathfrak{b}}
\newcommand{\p}{\mathfrak{p}}
\newcommand{\q}{\mathfrak{q}}
\newcommand{\g}{\mathfrak{g}}
\newcommand{\cfrak}{\mathfrak{c}}
\newcommand{\ufrak}{\mathfrak{u}}
\newcommand{\vfrak}{\mathfrak{v}}
\newcommand{\Mfrak}{\mathfrak{M}}
\newcommand{\kfrak}{\mathfrak{k}}
\newcommand{\cohom}{H}
\newcommand{\Hom}{\operatorname{Hom}}
\newcommand{\N}{\operatorname{N}}
\newcommand{\Ehat}{\widehat{E}}
\newcommand{\Ocal}{\mathscr{O}}
\newcommand{\Hcal}{\mathcal{H}}
\newcommand{\Lcal}{\mathcal{L}}
\newcommand{\Ccal}{\mathcal{C}}
\newcommand{\Cl}{\textrm{Cl}}
\newcommand{\Ical}{\mathcal{I}}
\newcommand{\Ecal}{\mathcal{E}}
\newcommand{\Dcal}{\mathcal{D}}
\newcommand{\Rcal}{\mathcal{R}}
\newcommand{\re}{\operatorname{Re}}
\newcommand{\Res}{\operatorname{res}}
\newcommand{\Mat}{\operatorname{Mat}}
\newcommand{\im}{\operatorname{Im}}
\newcommand{\fin}{f}
\newcommand{\hooklongrightarrow}{\lhook\joinrel\longrightarrow}
\newcommand\restr[2]{{
  \left.\kern-\nulldelimiterspace 
  #1 
  \vphantom{\big|} 
  \right|_{#2} 
  }}
\newcommand{\Ad}{\operatorname{Ad}}
\newcommand{\Gal}{\textrm{Gal}}
\newcommand{\res}{\operatorname{res}}
\newcommand{\Eis}{\operatorname{Eis}}
\newcommand{\cohomt}{\widetilde{H}}
\newcommand{\omegatilde}{\widetilde{\omega}}
\newcommand{\alg}{\operatorname{alg}}
\newcommand{\integ}{\operatorname{int}}
\newcommand{\SU}{\operatorname{SU}}
\newcommand{\U}{\operatorname{U}}
\newtheoremstyle{mytheoremstyle} 
    {2em}                    
    {2em}                    
    {\itshape}                   
    {}                           
    {\normalsize \bfseries \scshape}                   
    {.}                          
    {0,5em}                       
    {}  
\theoremstyle{mytheoremstyle}
\newtheorem{thm}{Theorem}[section]
\newtheorem*{thm*}{Theorem}
\newtheorem*{que*}{Question}
\newtheorem{cor}[thm]{Corollary}
\newtheorem*{cor*}{Corollary}
\newtheorem{lem}[thm]{Lemma}
\newtheorem{prop}[thm]{Proposition}
\theoremstyle{remark}
\newtheorem{rmk}{Remark}[section]
\numberwithin{equation}{section}
\renewcommand\section{\@startsection {section}{1}{\z@}%
                                   {-3.5ex \@plus -1ex \@minus -.2ex}%
                                   {5.3ex \@plus.2ex}%
                                   {\centering \normalfont\normalsize\scshape}}
\renewcommand\subsection{\@startsection{subsection}{2}{\z@}%
                                    {-0.5in \@plus -0ex \@minus -0ex}%
                                   {1.5ex \@plus.0ex}%
                                     {\normalfont\normalsize\itshape}}
\renewcommand\subsubsection{\@startsection{subsubsection}{3}{\z@}%
                                     {-3.25ex\@plus -1ex \@minus -.2ex}%
                                     {1.5ex \@plus .2ex}%
                                     {\normalfont\small\itshape}}
\title{\Large An upper bound on the denominator of Eisenstein classes \\ in Bianchi manifolds}
\author{\normalsize Romain Branchereau}
\begin{document}
\maketitle
\abstract{A general conjecture of Harder relates the denominator of the Eisenstein cohomology of certain locally symmetric spaces to special values of $L$-functions. In this paper we consider the locally symmetric space $Y_\Gamma=\SL_2(\Ocal) \backslash \HH_3$ associated to $\SL_2(K)$ where $K$ is an imaginary quadratic field. Berger proves a lower bound on the denominator of the Eisenstein cohomology in certain cases. In this paper, we show how results of Ito and Sczech can be used to prove an upper bound on the denominator in terms of a special value of $L$-function. When the class number of $K$ is one, we combine this result with Berger's result to obtain the exact denominator.
}
{
  \tableofcontents
}

\section{Introduction}
Let $K=\Q(\sqrt{D})$ be an imaginary quadratic field with ring of integers $\Ocal$.  If the class number of $K$ is larger than one, we suppose\footnote{This is required to define the canonical period used in the normalization of the $L$-function.} that $D \equiv 1 \mod 8$. Let $Y_\Gamma = \Gamma \backslash \HH_3$ where $\Gamma=\SL_2(\Ocal)$ and $\HH_3$ is the hyperbolic $3$-space. It is a non-compact space and let $X_\Gamma$ be its Borel-Serre compactification with boundary $\partial X_\Gamma$. The boundary has $h=\vert \Cl(K) \vert$ connected components, one at every cusp of $Y_\Gamma$, where $h$ is the class number of $K$.

The cohomology of $Y_\Gamma$ (or more general arithmetic groups) can be studied through their cuspidal and Eisenstein part; see \cite{harder1,schwermer1}. The inclusion $Y_\Gamma \hooklongrightarrow X_\Gamma$ is a homotopy equivalence, hence $H^1(Y_\Gamma;\C)$ is isomorphic to $H^1(X_\Gamma;\C)$. By restriction to the boundary we get a map
\begin{align}
    \res \colon H^1(Y_\Gamma; \C) \longrightarrow H^1(\partial X_\Gamma; \C),
\end{align}
whose kernel is the cuspidal (or interior) cohomology that we denote by $H^1_!(Y_\Gamma;\C)$. It can be identified with the image of the compactly supported cohomology $H^1_c(Y_\Gamma;\C)$ inside $H^1(Y_\Gamma;\C)$. The Eisenstein cohomology $H_{\Eis}^1(Y_\Gamma;\C)$ is the image of $ H^1(\partial X_\Gamma; \C)$ by Harder's Eisenstein map
\begin{align} \label{eismapintro}
    \Eis \colon H^1(\partial X_\Gamma;\C) \longrightarrow H^1(Y_\Gamma; \C).
\end{align} 
It is a complement to the cuspidal cohomology, so that we have a splitting
\begin{align}\label{splitting}
    H^1(Y_\Gamma;\C) = H^1_!(Y_\Gamma;\C) \oplus H^1_{\Eis}(Y_\Gamma;\C).
\end{align}

Let $H=K(j(\Ocal))$ be the Hilbert class field of $K$. Let $\chi$ be an unramified Hecke character of infinity type $(-2,0)$ {\em i.e.} that satisfies $\chi((\alpha))=\alpha^{-2}$ on principal ideals. Let $F/H$ be a Galois extension of $H$ all the values of $\chi$ and the $h$-th roots of unity. By the work of Harder \cite{harder1}, we know that the Eisenstein map is rational, in the sense that it preserves the natural $F$-structures
\begin{align}
    \Eis \colon H^1(\partial X_\Gamma;F) \longrightarrow H^1(Y_\Gamma; F).
\end{align}
It is natural to ask how the integral structures behave. Let $\cohomt_1(Y_\Gamma;{\Ocal_F})$ be the free part of the homology with coefficients in the ring of integers ${\Ocal_F}$ of $F$, and let $\cohomt^1(Y_\Gamma;{\Ocal_F})$ be its dual with respect to the natural pairing between homology and cohomology. It is the cohomology of differential $1$-forms whose integral along any integral homology class is in ${\Ocal_F}$.
It is known that in general the Eisenstein map is not integral, in the sense that the image of
\begin{align} \label{eisintmap}
    \Eis \colon \cohomt^1(\partial X_\Gamma;{\Ocal_F}) \longrightarrow H^1(Y_\Gamma; F)
\end{align}
is not contained in $\cohomt^1(Y_\Gamma;{\Ocal_F})$. Hence we have have two integral structures on $H_{\Eis}^1(Y_\Gamma;\C)$. The first one is given by the integral Eisenstein classes
\begin{align}
    \Lcal_{0}\coloneqq \cohomt^1(X_\Gamma;{\Ocal_F}) \cap H_{\Eis}^1(X_\Gamma;\C) \subset H_{\Eis}^1(X_\Gamma;\C).
\end{align} The second one is coming from the integral structure on the boundary. We denote by 
\begin{align}
    \Lcal_{\Eis}\coloneqq \Eis(\cohomt^1(\partial X_\Gamma;{\Ocal_F}))
\end{align}
the image of the integral cohomology by $\Eis$. We call the denominator of the Eisenstein cohomology the $\Ocal_F$-ideal
\begin{align}
    \Den(\Lcal_{\Eis}) \coloneqq \left \{ \left . \lambda \in \Ocal_F \right \vert \lambda \Lcal_{\Eis} \subset \Lcal_0 \right \},
\end{align}
that relates these two natural integral structures. It was previously studied for Hilbert modular varieties in \cite{maennel}, for the degree 2 cohomology on Bianchi manifolds in \cite{feldhusen} and in the special case $K=\Q(i)$ in \cite{koenig}. See also \cite{harder2} for the latter case. Finally, in the case of Bianchi manifolds, Berger \cite{berger1} proves a lower bound on the denominator ideal in terms of an $L$-function. The main result of this paper is to give an upper bound.

By composing the Hecke character $\chi$ with the norm from $H$ to $K$ we get a Hecke character $\chi \circ \N_{H/K}$ of the same type on the Hilbert class field. Let $L(\chi \circ \N_{H/K},0)$ be the associated Hecke $L$-function at $s=0$. Let $L^{\integ}(\chi \circ \N_{H/K},0)$ be the normalization by a suitable and canonical complex period to make it an algebraic integer. 

\begin{thm*}(Theorem \ref{mainthm}) \label{thmdenomintro} We have the upper bound (in the sense of divisibility) on the denominator
\begin{align}\frac{1}{2\sqrt{D}}L^{\integ}(\chi \circ \N_{H/K},0) {\Ocal_F} \subset \Den(\Lcal_{\Eis}).
\end{align}
\end{thm*}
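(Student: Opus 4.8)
The plan is to unwind the definition of $\Den(\Lcal_{\Eis})$: the asserted divisibility is equivalent to the statement that, for every integral boundary class $\omega\in\cohomt^1(\partial X_\Gamma;\Ocal_F)$, the Eisenstein class $\frac{1}{2\sqrt{D}}L^{\integ}(\chi\circ\N_{H/K},0)\cdot\Eis(\omega)$ pairs integrally with the free part of $H_1(X_\Gamma;\Z)$, i.e.\ lies in $\cohomt^1(X_\Gamma;\Ocal_F)$. First I would reduce the set of test cycles. Since $\Eis(\omega)$ is an Eisenstein class it annihilates the interior homology, and by Poincar\'e--Lefschetz duality on the oriented $3$-manifold $X_\Gamma$ -- half-lives-half-dies applied to the cusp $2$-tori making up $\partial X_\Gamma$ -- the part of $H_1(X_\Gamma;\Q)$ that $\Eis(\omega)$ detects is exactly the image of $H_1(\partial X_\Gamma;\Q)$. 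On that part the pairing with $\Eis(\omega)$ is computed by the constant term $\res\Eis(\omega)$, and by the theory of Eisenstein series this constant term is $\omega$ twisted by the value at the cohomological point of the standard intertwining operator, the ``scattering matrix'' $M$ acting on $H^1(\partial X_\Gamma)$; its entries lie in $F$ but carry denominators, and bounding them is the whole problem (together with the bounded denominator of the comparison between $H_1(X_\Gamma;\Z)$ and the image of $H_1(\partial X_\Gamma;\Z)$).

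The key input -- this is where the results of Ito and Sczech enter -- is to make $M$ explicit. Sczech's Eisenstein cocycle for $\PSL_2$ over the imaginary quadratic field $K$, built with the help of Ito's $\SL_2(\Ocal)$-equivariant analogue on $\HH_3$ of $\operatorname{Im}\log\eta$, furnishes a concrete $1$-cocycle on $\Gamma$ that represents a known multiple of the Eisenstein cohomology and whose values are finite Kronecker--Eisenstein lattice sums. Evaluating it on the cycles coming from the boundary expresses the entries of $M$ as explicit algebraic numbers, which by the functional equation of the relevant Hecke $L$-functions are ratios of the special values $L(\chi\psi,0)$ with $\psi$ running over the characters of $\Cl(K)$. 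Assembling the contributions of the $h$ cusps -- equivalently, passing to the Hilbert class field $H$, on which $\chi\circ\N_{H/K}$ has $L$-function $\prod_{\psi}L(\chi\psi,s)$ by Artin formalism -- shows that, after dividing by the canonical complex period $\Omega$ (which is precisely the period used to normalise $L^{\integ}$, and whose existence in the class-number-$>1$ case is what the hypothesis $D\equiv 1\bmod 8$ guarantees), the common denominator of $M$ divides $L^{\integ}(\chi\circ\N_{H/K},0)$ up to an elementary factor.

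Combining the two steps, $\frac{1}{2\sqrt{D}}L^{\integ}(\chi\circ\N_{H/K},0)$ clears $M$: for integral $\omega$ it makes $\frac{1}{2\sqrt{D}}L^{\integ}(\chi\circ\N_{H/K},0)\cdot\res\Eis(\omega)$ an integral class on $\partial X_\Gamma$, and tracing this back through the duality of the first step -- the factor $2\sqrt{D}$ absorbing the discriminant that occurs in the pairing and in the constant term, together with the bounded denominator of the $H_1(X_\Gamma;\Z)$ versus $H_1(\partial X_\Gamma;\Z)$ comparison -- yields $\frac{1}{2\sqrt{D}}L^{\integ}(\chi\circ\N_{H/K},0)\cdot\Eis(\omega)\in\cohomt^1(X_\Gamma;\Ocal_F)\cap H^1_{\Eis}(X_\Gamma;\C)=\Lcal_0$. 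Since this holds for every integral $\omega$, we get $\frac{1}{2\sqrt{D}}L^{\integ}(\chi\circ\N_{H/K},0)\in\Den(\Lcal_{\Eis})$, which is the assertion.

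The main obstacle I expect is the precise arithmetic matching in the second step: showing that Sczech's and Ito's cocycle represents the Eisenstein class with the \emph{exact} scalar -- not merely up to a rational constant or up to an interior class -- and that its lattice-sum values are genuinely $\Ocal_F$-integral once twisted by the values of $\chi$ and the $h$-th roots of unity; and then tracking every power of $2$ and every $\sqrt{D}$, including the denominator of the canonical period $\Omega$, so that the loss incurred in passing between the integral structures on $\partial X_\Gamma$ and on $X_\Gamma$ is no worse than $\frac{1}{2\sqrt{D}}$. The class-number-$>1$ case adds the bookkeeping of descending the computation from $H$ back to $K$ while controlling the permutation action of $\Cl(K)$ on the cusps, and one still has to dispatch the point that $H_1(X_\Gamma;\Z)_{\mathrm{free}}$ need not split as interior $\oplus$ Eisenstein over $\Z$, so that the denominator of this splitting does not enlarge the stated bound.
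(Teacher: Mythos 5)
Your proposal assembles the right ingredients (the Ito--Sczech cocycle, the product $\prod_\varphi L(\varphi\chi,0)=L(\chi\circ\N_{H/K},0)$ over class group characters, the period normalization), but the architecture has a gap that I do not think can be closed as stated, and it locates the denominator in the wrong place. First, the reduction to boundary cycles is circular. You propose to test $\Eis(\omega)$ only against the image of $H_1(\partial X_\Gamma;\Z)$ and the interior part, and to absorb the discrepancy into ``the bounded denominator of the comparison between $H_1(X_\Gamma;\Z)$ and the image of $H_1(\partial X_\Gamma;\Z)$''; but that index is precisely the quantity the theorem is bounding. Indeed, by Proposition \ref{resteis} the restriction $\res(E_{\chi,\afrak})=\delta_{\afrak,\afrak_c}\omega_{\chi,r}-\delta_{\afrak,\bar\afrak_c}\bar\omega_{\chi,r}$ is \emph{already integral}: the scattering contribution in this cohomological normalization is a sign, not an $L$-value. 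So $\Eis(\omega)$ pairs integrally with every cycle coming from the boundary and pairs to zero with the annihilator of $H^1_{\Eis}$; if your reduction sufficed, the denominator would be trivial, contradicting Berger's lower bound (e.g.\ $G_2(L_\Ocal)=12$ for $D=-43$). The entire denominator lives in the index you defer to the end, and bounding that index is equivalent to the theorem. Relatedly, the entries of the relevant matrix are not ``ratios of special values'': the determinant is the product of the $L(\varphi\chi,0)$, obtained from the Dedekind determinant formula (Lemma \ref{langlem}), while the individual entries are single Kronecker--Eisenstein values $\chi(\afrak\bfrak^{-1})G_2(\afrak^{-1}\bfrak)/w(\afrak^{-1}\bfrak)$.

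The paper sidesteps the index problem entirely by producing a second, \emph{globally} integral basis of $H^1_{\Eis}$: the forms $\Ehat_{\chi,\afrak}$ summed over all of $\afrak\times\afrak$ rather than over coprime pairs. Their periods over arbitrary loops $[u_0,\gamma u_0]$ are computed in closed form (Theorem \ref{eisensteinszcech}) by pushing the path to the boundary, and equal Sczech's cocycle, which is integral after multiplication by $2$ and the period normalization; since $\gamma\mapsto[u_0,\gamma u_0]$ surjects onto $H_1(Y_\Gamma;\Z)$ (Proposition \ref{surjectivegamma}), this gives integrality against \emph{all} integral cycles, with no boundary-versus-interior decomposition needed. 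The $L$-value then enters not through the constant term but through inverting the change of basis $\Ehat^{\alg}_{\chi,\afrak}=M_\chi E_{\chi,\afrak}$ furnished by Proposition \ref{continuation}: one multiplies by the adjugate of $4\sqrt{D}M_\chi$, whose entries are integral, and $\det M_\chi=L^{\alg}(\chi\circ\N_{H/K},0)$ produces exactly the factor $\frac{1}{2\sqrt{D}}L^{\integ}(\chi\circ\N_{H/K},0)$ in front of $E_{\chi,\afrak}$. To repair your argument you would need either to prove the unconditional integrality of the Ito forms against all of $H_1(X_\Gamma;\Z)$ (which is the Sczech-cocycle computation) or to bound the lattice index directly, and the latter is not addressed.
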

\noindent The theorem tells us that given a class $\Eis(\omega)$ in $\Lcal_{\Eis}$, we need to multiply it at most by the value $2^{-1}D^{-\frac{1}{2}}L^{\integ}(\chi \circ \N_{H/K},0)$ to make it integral.

We can then combine this result with Berger's result to obtain an equality in certain cases. Suppose that $K=\Q(\sqrt{D})$ has class number one and no non-trivial units. Then the Hilbert class group is $H=K$. Furthermore, since the image of $\chi$ will lie in $K$, we can take $F=K$ as well. Let $\q$ be a prime ideal of $\Ocal$. Let $K_\q$ be the completion at $\q$ with ring of integers $\Ocal_{\q}$. We fix a complex embedding of $K_\q$ and consider cohomology with coefficients in $\Ocal_\q$. We define the lattice
\begin{align} \label{eisatq}
    \Lcal_{\Eis,\q}\coloneqq \Eis(\cohomt^1(\partial X_\Gamma;{\Ocal_\q}))
\end{align}
and the denominator at $\q$
\begin{align}
    \Den(\Lcal_{\Eis,\q}) \coloneqq \left \{ \left . \lambda \in \Ocal_{\q} \right \vert \lambda  \Lcal_{\Eis,\q} \subset \cohomt_{\Eis}^1( X_\Gamma;{\Ocal_\q}) \right \}.
\end{align}
When $K$ has class number one, the algebraic $L$-function is $L^{\alg}(\chi,0)=\frac{1}{2}G_2(L_\Ocal)$ where 
\begin{align}
    G_2(L) \coloneqq \restr{\sideset{}{'}\sum_{\omega \in L} \frac{1}{\omega^2 \vert \omega \vert^\lambda}}{\lambda=0}
\end{align}
and $L_\Ocal=\Omega^{-2}\Ocal$ is a suitable normalization of the lattice $\Ocal \subset K$. The values of $G_2(L_\Ocal)$ are well known and are integral away from $2$, see Table \ref{tab:table1}. After comparing our setting to Berger's setting and combining both results we obtain the following.
\begin{cor*}(Corollary \ref{maincor}) Let $\q$ be a prime ideal coprime to $2D$ and suppose that $K$ has class number one and no non-trivial units. Then the denominator at $\q$ of the Eisenstein cohomology is exactly
    \[ \Den(\Lcal_{\Eis,\q})=G_2(L_\Ocal)\Ocal_{\q}. \]
\end{cor*}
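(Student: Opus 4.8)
The plan is to combine the upper bound from Theorem \ref{mainthm} with Berger's lower bound, after carefully matching the two normalizations of the relevant $L$-value. First I would specialize Theorem \ref{mainthm} to the case at hand: since $K$ has class number one, we have $H=K$ and $\N_{H/K}$ is the identity, so $\chi\circ\N_{H/K}=\chi$ and $L^{\integ}(\chi\circ\N_{H/K},0)$ is simply $L^{\integ}(\chi,0)$. Using the identity stated in the excerpt, $L^{\alg}(\chi,0)=\tfrac12 G_2(L_\Ocal)$, I would trace through the relation between $L^{\integ}$ and $L^{\alg}$ (they differ only by the canonical period already absorbed, plus possibly the factor $\tfrac{1}{2\sqrt D}$ appearing in the theorem) to rewrite the upper bound as a statement purely about $G_2(L_\Ocal)$. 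After localizing at a prime $\q$ coprime to $2D$, the factor $\tfrac{1}{2\sqrt D}$ becomes a $\q$-adic unit, and Theorem \ref{mainthm} gives
\begin{align}
  G_2(L_\Ocal)\,\Ocal_\q \subset \Den(\Lcal_{\Eis,\q}).
\end{align}

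Next I would invoke Berger's result \cite{berger1} for the reverse inclusion. The subtlety here is that Berger works with a possibly different integral structure on the boundary cohomology and with his own normalization of the $L$-value, so I would need a comparison lemma identifying $\Lcal_{\Eis,\q}$ (as defined in \eqref{eisatq}) and $\cohomt_{\Eis}^1(X_\Gamma;\Ocal_\q)$ with the lattices appearing in Berger's lower bound, up to $\q$-adic units, again using coprimality to $2D$ to discard any spurious factors supported at $2$ or at primes dividing $D$. Granting this, Berger's bound reads $\Den(\Lcal_{\Eis,\q})\subset G_2(L_\Ocal)\,\Ocal_\q$, and combining the two inclusions yields the claimed equality $\Den(\Lcal_{\Eis,\q})=G_2(L_\Ocal)\,\Ocal_\q$.

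The main obstacle I anticipate is precisely the bookkeeping in this comparison step: reconciling the integral/cohomological conventions of this paper (built on Ito--Sczech cocycles and the period $\Omega$ entering $L_\Ocal=\Omega^{-2}\Ocal$) with those in Berger's work, and verifying that all discrepancies are concentrated at primes dividing $2D$ so that they vanish after localizing at $\q$. A secondary point to check is that the hypothesis "no non-trivial units" (which restricts $D$ to a short list, e.g. $D=-2,-7,-11,\dots$, excluding $-1$ and $-3$) is exactly what is needed to make the canonical period well-defined and to ensure $G_2(L_\Ocal)$ is a $\q$-integer for $\q\nmid 2D$, as recorded in Table \ref{tab:table1}; I would cite that table for the integrality and non-vanishing of $G_2(L_\Ocal)$ away from $2$, which guarantees the ideal $G_2(L_\Ocal)\,\Ocal_\q$ is a genuine (nonzero, possibly unit) ideal and the statement is non-vacuous.
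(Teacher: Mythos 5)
Your proposal follows essentially the same route as the paper: the upper bound comes from Theorem \ref{mainthm} after noting that $4\sqrt{D}$ (and the extra $\tfrac{1}{2\sqrt D}$) are $\q$-adic units for $\q$ coprime to $2D$, and the lower bound comes from Berger's Theorem 29 via a comparison of his adelic Eisenstein classes with the classical ones. The "comparison lemma" you flag as the main obstacle is exactly what the paper supplies in Section \ref{bergerrel}, culminating in Proposition \ref{compeis} ($\Eis(\Psi,s)=\tfrac12 E_\chi(u,s)$), which gives $\Lcal_{\Eis,\q}=\Eis(\Psi^0)\Ocal_\q$ and hence $\Den(\Lcal_{\Eis,\q})=\Den(\Eis(\Psi^0))$.
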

\begingroup
\begin{table}[h!] \captionsetup{width=.6\textwidth,font={normalsize,it},justification=centering}
\renewcommand{\arraystretch}{3}
    \centering
    \begin{tabular}{ |c|c|c|c|c|c|c|c| } 
  \hline
  $D$ &  $-7$ & $-8$ & $-11$ & $-19$ & $-43$ & $-67$ & $-167$ \\
  \hline
  $G_2(L_\Ocal)$ & $\frac{1}{2}$ & $\frac{1}{2}$ & $2$ & $2$ & $12$ & $2\cdot 19$ & $2^2 \cdot 181$ \\
  \hline
\end{tabular}
    \caption{The values of $G_2(L_\Ocal)$ for the seven imaginary quadratic fields of class number one and having no non-trivial units.}
    \label{tab:table1}
\end{table}
\endgroup
\begin{rmk} Berger's result gives a lower bound on the denominator of the Eisenstein cohomology $\Lcal_{\Eis}(S_{K_\fin})$ of the adelic space $S_{K_\fin}$ associated to $\SL_2(K)$. The space $S_{K_\fin}$ has several connected components, and one of them is $Y_\Gamma$. Hence, since $Y_\Gamma \subset S_{K_\fin}$ we have $\Den(\Lcal_{\Eis}(S_{K_\fin})) \subset \Den(\Lcal_{\Eis})$. Thus, in general we cannot combine Berger's lower bound on $\Den(\Lcal_{\Eis}(S_{K_\fin}))$ with our upper bound on $\Den(\Lcal_{\Eis})$. However, when the class number is one we have $S_{K_\fin}=Y_\Gamma$, yielding the equality of the corollary.
In the case where the class number of $K$ is greater than one, one would need to give a lower bound on the denominator of the Eisenstein cohomology of the other connected components of $S_{K_\fin}$; see Remark \ref{indications}.
\end{rmk}

\paragraph{A few words on the proof of the theorem.} Let $\chi$ be an unramified Hecke character of type $(-2,0)$. For every ideal class $\afrak$ corresponding to a cusp of $Y_\Gamma$, we have a form $\omega_{\chi,\afrak}$ on the corresponding boundary component. They are integral forms spanning the cohomology of the boundary $H^1(\partial X_\Gamma;\C)$. Their image by the Eisenstein map are forms $E_{\chi,\afrak}$ that span the $h$-dimensional Eisenstein cohomology $H_{\Eis}^1(X_\Gamma;\C)$. These forms are not integral, they define classes in $H_{\Eis}^1(X_\Gamma;F)$ but not in $\cohomt_{\Eis}^1(X_\Gamma;\Ocal_F)$.

On the other hand, we have another basis $\Ehat_{\chi,\afrak}$ of the Eisenstein cohomology. These forms appear in the work of Ito \cite{ito}. A more general construction of these Eisenstein classes
 was done by Bergeron-Charollois-Garcia in \cite{bcg,bcg21} using the Mathai-Quillen formalism. 
 
 For a fractional ideal $\afrak$ of $K$ we define the Sczech cocycle $\Phi_{\afrak} \colon \Gamma \longrightarrow \C$ by
\begin{align}
    \Phi_\afrak \begin{pmatrix}
    a & b \\ c & d
    \end{pmatrix} \coloneqq 
    \begin{cases}
    I \left ( \frac{a+d}{c} \right )G_2(\afrak)-D(a,c,\afrak) & \textrm{if} \quad c \neq 0, \nonumber \\[0.3cm]
   I \left ( \frac{b}{d} \right )G_2(\afrak) & \textrm{if} \quad c = 0,
    \end{cases}
\end{align}
where $G_2(\afrak)$ is an Eisenstein series and $D(a,c,\afrak)$ a Dedekind sum; see Section \ref{eisdefs} for the definitions. The form $\Ehat_{\chi,\afrak}$ is related to the Sczech cocycle as follows
\begin{align*}  \Phi_\afrak(\gamma)=\chi(\afrak)\int_{u_0}^{\gamma u_0} \Ehat_{\chi,\afrak},
\end{align*}
 where $u_0$ is any point on $\HH_3$ and $\gamma \in \Gamma$. This formula is proved by using the idea of \cite{bcg21} to move the path of integration $[u_0 , \gamma u_0]$ to infinity. More precisely, choose a cusp $r$ of $Y_\Gamma$ and let $[r,\gamma^{-1} r]$ be the modular symbol joining the two cusps $r$ and $\gamma^{-1} r$. There is a homotopy between $[u_0 , \gamma u_0]$ and $[r,\gamma^{-1} r]$; see Figure \ref{movingtoboundarya}. The integral along the modular symbol $[r,\gamma^{-1} r]$ gives the Dedekind sum, whereas the term $I \left ( \frac{a+d}{c} \right )G_2(\afrak)$ is a contribution from the cusps. Note that this formula already appears in the work of Ito \cite[Theorem.~3]{ito}. A similar relation between Harder's Eisenstein cohomology for adelic spaces and Sczech's cocycle appears in work of Weselmann, see in particular \cite[Bemerkung.~2 on p.~116]{weselmann}.

 After a suitable normalization, the Sczech cocycle takes values in $\Ocal_H$. Hence, contrary to the forms $E_{\chi,\afrak}$, the forms  $\Ehat_{\chi,\afrak}$ are integral and define classes in $\cohomt^1(X_\Gamma;{\Ocal_F})$.
The two bases $E_{\chi,\afrak}$ and $\Ehat_{\chi,\afrak}$ of the Eisenstein cohomology $H^1(X_\Gamma;\C)$ are related by a matrix $M_\chi$ in $\Mat_h(F)$ whose determinant is $L(\chi \circ \N_{H/K},0)$. This explains the appearance of the $L$-function in the denominator.

\subsection*{Acknowledgements} This project is a generalization of some results of my thesis. I thank my advisors Nicolas Bergeron and Luis Garcia for suggesting me this topic and for their support. I was funded from the European Union’s Horizon 2020 research and innovation programme under the Marie Skłodowska-Curie grant agreement N$\textsuperscript{\underline{\scriptsize o}}$754362 \includegraphics[width=6mm]{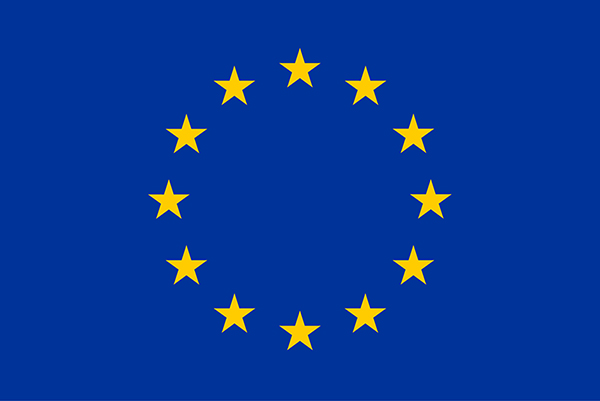} during the first stages of this project, then by the University of Manitoba. I thank Pierre Charollois and Sheng-Chi Shih for helpful comments that gretaly improved the exposition.

\section{Eisenstein series, $L$-functions and Sczech's cocycle} \label{eisdefs}

Let $K=\Q(\sqrt{D})$ be a quadratic imaginary field with no non-trivial units. Suppose that $K$ has class number one or that $D \equiv 1 \pmod 8$. Let $\Ocal$ be its ring of integers, that we view as a lattice in $\C$ after fixing some embedding of $K$ in $\C$. 

\subsection{Canonical periods} \label{seccanperiod}

\paragraph{When the class number of $K$ is larger than one.} We follow \cite[Section.~5, p.~103]{sczech2}. Let $\tau=\frac{1+\sqrt{D}}{2}$ so that $\Ocal=\Z+\tau\Z$. Let
\begin{align}
    \eta(\tau)=e^{\frac{\pi i \tau}{12}} \prod_{n=1}^{\8} \left (1-e^{2 n i \pi \tau} \right )
\end{align}
be the Dedekind eta function and define 
\begin{align}
    u \coloneqq -\frac{2^{12}\eta(2\tau)^{24}}{\eta(\tau)^{24}}.
\end{align}
It is a unit in the Hilbert class field $H$. Define the elliptic curve
\begin{align}
   E : y^2=4x^3-ax-b
\end{align}
where 
\begin{align}
a \coloneqq 12D(u-16), \qquad b \coloneqq \left (2\sqrt{D} \right )^3 \sqrt{u(j-1728)},   
\end{align}
$j$ is the $j$-invariant $j(\tau)$ and the square root is chosen such that $b$ is a positive real number. Then $a$ and $b$ are in $\Ocal_H$, the period lattice is $L_\Ocal=\Omega \Ocal$ where
\begin{align} \label{perioddef}
    \Omega \coloneqq \frac{\pi}{\left ( 144 \vert D \vert \right )^{\frac{1}{4}}}\frac{\eta(\tau)^4}{\eta(2\tau)^2},
\end{align}
and the discriminant of the elliptic curve is 
\begin{align}
    \Delta(L_\Ocal)=12^6D^3u.
\end{align}
The Weierstrass equation is related to the period lattice $L_\Ocal$ by $a=g_2(L_\Ocal)$ and $b=g_3(L_\Ocal)$, where for any lattice $L\subset \C$ we define
\begin{align}
    g_2(L) \coloneqq 60\sideset{}{'} \sum_{\omega \in L} \frac{1}{\omega^4}, \qquad g_3(L) \coloneqq 140\sideset{}{'} \sum_{\omega \in L} \frac{1}{\omega^6}.
\end{align}

For any fractional ideal $\afrak$ of $K$ let $\sigma=\sigma_\afrak=(\afrak,H/K)$ be the Artin symbol. Let $E_{\afrak} =E^{\sigma_\afrak}$ be the elliptic curve given by the Weierstrass equation
\begin{align} \label{conjell}
    E_{\afrak} \colon y^2=4x^3-a^{\sigma}x-b^{\sigma}, 
\end{align} 
which is also defined over $\Ocal_H$. Since $j(\Ocal)^{\sigma_\afrak}=j(\afrak^{-1})$ we have that $E_{\afrak} \simeq \C/L_{\afrak}$ where $L_{\afrak} = \Omega(\afrak)\afrak^{-1}$ for some complex period $\Omega(\afrak) \in \C^\times$ that we fix. Hence $g_k(L_\afrak)=g_k(L)^{\sigma}$ is is in $\Ocal_H$ for $k=2,3$. Let $\lambda(\afrak)$ be the complex number
\begin{align}
    \lambda(\afrak) \coloneqq \frac{\Omega(\afrak)}{\Omega},
\end{align}
so that $L_\afrak=\lambda(\afrak)\afrak^{-1}L_\Ocal$.
It has the following properties \cite[Appendix.~D(e) and D(f) on p.~371]{robert}
\begin{enumerate}
\item $\lambda(\afrak) \in H^\times$,
\item $\lambda(\alpha\afrak)=\alpha\lambda(\afrak)$,
\item $\lambda(\afrak\cfrak)=\lambda(\cfrak)\lambda(\afrak)^{\sigma_\cfrak}$.
\end{enumerate}
\paragraph{When the class number of $K$ is one.} Suppose  that $K=\Q(\sqrt{D})$ has class number one and has non-trivial units. This is the case for the seven values of $D$ listed in the table below and extracted from \cite[Tableau.~B.1]{robert}. None of these values of $D$ is congruent to $1$ modulo $8$, so we cannot use the above construction of $\Omega$.
For each value of $D$ consider the elliptic curve
\begin{align}
   E : y^2=4x^3-ax-b
\end{align}
where $a$ and $b$ are liste below in Table \eqref{tableab}. Comparing the invariant $j(E)=1728\frac{a^3}{a^3-27b^2}$ with a list\footnote{See for example \cite[p.261]{cox}.} of $j$-invariants $j(\Ocal)$ we find that they agree. Hence, the elliptic curve $E$ has complex multiplication by $\Ocal$ and $E(\C) \simeq \C / L_\Ocal$ where $L_\Ocal = \Omega \Ocal$ for some complex period $\Omega \in \C^\times$ that we fix.

\begingroup
\begin{table}[h!] \captionsetup{width=.75\textwidth,font={normalsize,it},justification=centering}
\renewcommand{\arraystretch}{2}
    \centering
    \begin{tabular}{ |c|c|c|c|c|c|c|c|c|c| } 
  \hline
  $D$  & $-7$ & $-8$ & $-11$ & $-19$ & $-43$ & $-67$ & $-167$ \\
  \hline
  $a$ & $5\cdot7 $ & $2 \cdot 3 \cdot 5 $ & $2^3 \cdot 3 \cdot 11$ & $2^3 \cdot 19$ & $2^4 \cdot 5 \cdot 43$ & $2^3 \cdot 5 \cdot 11 \cdot 67$ & $2^4 \cdot 5 \cdot 23 \cdot 29 \cdot163$ \\
  \hline
  $b$ & $7^2$ & $2^2 \cdot7$ & $7 \cdot11^2$ & $19^2$ & $3 \cdot 7 \cdot 43^2$ & $7 \cdot 31 \cdot 67^2$ & $7 \cdot 11 \cdot 19 \cdot 127 \cdot 163^2$ \\
  \hline
\end{tabular}
    \caption{The coefficients $a$ and $b$ for the seven imaginary quadratic fields of class number one and having no non-trivial units.}
    \label{tableab}
\end{table}
\endgroup

\subsection{Kronecker-Eisenstein series} Let $L=\Z\omega_1+\Z \omega_2$ be any lattice in $\C$ where $\omega_1$ and $\omega_2$ are complex numbers such that  $\omega_1/\omega_2$ has positive imaginary part. We define
\begin{align}
\Dcal(L) \coloneqq I(\omega_1\overline{\omega}_2)=2 i \vert \omega_2 \vert^2 \im \left ( \frac{\omega_1}{\omega_2}\right ).
\end{align}
where $I(z)\coloneqq z-\bar{z}$. The order of $L$ is the order
\begin{align}
    \Ocal(L) \coloneqq \left \{ \left . \lambda \in \C \right \vert  \lambda L=L \right \}
\end{align}
of $\Ocal$, and the lattice $L$ is homothetic to an ideal in $\Ocal(L)$. In particular, if $L=\afrak$ is a fractional ideal then $\Ocal(\afrak)=\Ocal$.
Consider the character
\begin{align}
    \theta(z) \coloneqq \exp \left (2i\pi{\frac{I(z)}{\Dcal(L)}} \right ).
\end{align}
For a non-negative integer $k$ and a complex number $s$ we define the Kronecker-Eisenstein series

\begin{align}
    G(s,k,p,q,L) \coloneqq \sideset{}{'}\sum_{\omega \in L} \theta(w\bar{p})\frac{\overline{q+\omega}^k}{\vert q+\omega \vert^{2s+k}},
\end{align}
which converges for $\re(s)>1$ and the $'$ means that we remove $\omega=-q$ from the summation if $q$ is in $L$. This is the series considered by Weil in \cite[section~VIII]{weilelliptic}. The function admits a meromorphic continuation to the whole plane with only possible poles at $s=0$ (if $k=0$ and $q$ is in $L$) and at $s=\frac{1}{2}$ (if $k=0$ and $p$ is in $L$); see \cite[section~VIII, p.~80]{weilelliptic}. Moreover, it satisfies the functional equation
\begin{align} \label{funceq2}
    \Ecal(s,k,p,q,L)=\theta(p\bar{q})\Ecal(1-s,k,p,q,L)
\end{align}
where
\begin{align}
    \Ecal(s,k,p,q,L) \coloneqq \left ( \frac{2i\pi}{\Dcal(L)}\right )^{-s}\Gamma \left (s+\frac{k}{2} \right )G(s,k,p,q,L).
\end{align}
For a positive integer $k$ we set
\begin{align}
G_k(z,L) \coloneqq G\left ( \frac{k}{2},k,0,z,L\right )= \restr{\sideset{}{'}\sum_{\omega \in L} \frac{1}{(z+\omega)^k \vert z+k \vert^\lambda}}{\lambda=0}
\end{align}
and
\begin{align}
G(z,L) \coloneqq \frac{2i\pi}{\Dcal(L)}G(0,2,0,z,L).
\end{align}
They satisfy the following homogeneity properties
\begin{align} \label{propeis}
    G_k(\alpha z, \alpha L) & =\alpha^{-k}G_k(z,L) \nonumber \\
    G(\alpha z, \alpha L) & = \frac{\bar{\alpha}}{\alpha} G(z,L).
\end{align}
When $z=0$ we set
\begin{align}
    G(L) & \coloneqq G(0,L), \nonumber \\
    G_k(L) & \coloneqq G_k(0,L).
\end{align}

Let $L_\afrak$ be the lattices of Section \ref{seccanperiod}. The following result is well known and was first proved by Damerell \cite{damerell}. See also \cite[Appendix D(c)]{robert}.
\begin{prop} \label{eisint}
    The value $2\sqrt{D}G_2(L_\afrak)$ is an algebraic integer in $\Ocal_H$.
\end{prop}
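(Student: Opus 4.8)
The plan is to recognize $G_2(L_\afrak)$ as a CM value of a nearly holomorphic modular form of weight $2$ and to run it through the classical theory of complex multiplication, as in \cite{damerell} and \cite[Appendix~D(c)]{robert}. Writing $L_\afrak=\Z\omega_1+\Z\omega_2$, the value $G_2(L_\afrak)$ is the Hecke regularization of the (non-absolutely convergent) weight-$2$ Eisenstein series $\sum'_\omega \omega^{-2}$ over $L_\afrak$; it is the lattice incarnation of a nearly holomorphic modular form of weight $2$ (Weil's $e_2^\ast$, equivalently the ``algebraic part'' of the quasi-period $\eta(\omega_2;L_\afrak)/\omega_2$, with the transcendental term $2i\pi/\Dcal(L_\afrak)=\pi/A(L_\afrak)$ subtracted). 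In particular, by the weight-$2$ homogeneity in \eqref{propeis} and $L_\afrak=\Omega(\afrak)\afrak^{-1}$, one has $G_2(L_\afrak)=\Omega(\afrak)^{-2}G_2(\afrak^{-1})$, so the division by the square of the period --- exactly the operation that turns a CM value of a weight-$2$ nearly holomorphic form into an algebraic number --- is already present.

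Granting this, the soft statement $G_2(L_\afrak)\in H$ follows from Damerell's theorem: $\afrak^{-1}$ is a CM point with field of moduli $H$, the curve $E_\afrak\simeq\C/L_\afrak$ of \eqref{conjell} is defined over $\Ocal_H$ with $g_2(L_\afrak),g_3(L_\afrak)\in\Ocal_H$, and $\Omega(\afrak)$ is the period compatible with this $H$-rational structure, as recorded by $\lambda(\afrak)\in H^\times$ and the functional properties listed after \eqref{conjell}. For the sharp integral refinement I would keep track of an explicit identity --- as in \cite[Appendix~D]{robert} --- expressing $2\sqrt D\,G_2(L_\afrak)$ as a polynomial with $\Ocal_H$-coefficients in $g_2(L_\afrak)$, $g_3(L_\afrak)$, $\Delta(L_\afrak)^{-1}$ and the coordinates of a torsion point of small order on $E_\afrak$. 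Each ingredient is integral: $g_2(L_\afrak),g_3(L_\afrak)\in\Ocal_H$ by hypothesis; $E_\afrak$ admits a Weierstrass model over $\Ocal_H$, so its division coordinates are algebraic integers; and $\Delta(L_\afrak)=\Delta(L_\Ocal)^{\sigma_\afrak}=12^6D^3u^{\sigma_\afrak}$ with $u^{\sigma_\afrak}\in\Ocal_H^\times$, which together with $\Dcal(\afrak)=\pm\N(\afrak)\sqrt D$ is precisely what leaves a single power of $\sqrt D$ (and of $2$, absorbed into the displayed constant) in the denominator.

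\textbf{The main difficulty is this final bookkeeping.} The algebraicity $G_2(L_\afrak)\in H$ is routine once the object is correctly identified, but to see that nothing worse than $2\sqrt D$ divides the denominator one needs the explicit Damerell--Robert identity in hand and must track carefully the primes $2$, $3$, and $\sqrt D$ entering through $\Delta(L_\afrak)$ and through the $2$- and $3$-division values of $\wp$ and $\wp'$; in particular one must verify that the factor $3^6$ of $12^6=2^{12}3^6$ does not survive. An alternative to the torsion-point identity would be to work directly from the Chowla--Selberg / Legendre relation, expressing $G_2(L_\afrak)$ through the periods of the second kind of $E_\afrak$ and invoking Shimura's CM period relations; this makes the algebraicity cleaner but the exact power of $\sqrt D$ no more transparent.
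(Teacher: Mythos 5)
The paper offers no proof of this proposition at all: it records the statement as well known and simply cites Damerell \cite{damerell} and \cite[Appendix~D(c)]{robert}. Your proposal points to exactly the same sources and correctly describes the mechanism behind them --- $G_2(L_\afrak)$ is the CM value of the weight-$2$ nearly holomorphic Eisenstein series, the homogeneity $G_2(L_\afrak)=\Omega(\afrak)^{-2}G_2(\afrak^{-1})$ together with the $\Ocal_H$-rational Weierstrass model \eqref{conjell} supplies the algebraicity, and the sharp denominator comes out of an explicit Damerell--Robert identity --- so you are taking the same route the paper implicitly takes. The one caveat is that the entire content of the proposition is the integrality of $2\sqrt{D}\,G_2(L_\afrak)$ (the statement $G_2(L_\afrak)\in H$ being comparatively soft), and you explicitly leave the decisive bookkeeping undone: you never write down the identity expressing $2\sqrt{D}\,G_2(L_\afrak)$ in terms of $g_2$, $g_3$, $\Delta(L_\afrak)^{-1}$ and torsion coordinates, nor verify that the factor $3^6$ of $12^6$ and the extra powers of $2$ and $\sqrt{D}$ in $\Delta(L_\afrak)=12^6D^3u^{\sigma_\afrak}$ cancel. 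As written, your text is a proof plan whose hard step is deferred to \cite[Appendix~D(c)]{robert} --- which is precisely what the paper does, so for the purpose of matching the paper the citation suffices; but a self-contained argument would require carrying out that final computation.
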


\subsection{Sczech cocycle}
Let $L \subset \C$ be a lattice such that $\Ocal(L)=\Ocal$. For $a$ and $c$ in $\Ocal$, and $c$ nonzero, we define the Dedekind sum
\begin{align}
    D(a,c,L) \coloneqq \frac{1}{c} \sum_{r \in L / c L} G_1 \left ( \frac{ar}{c},L \right )G_1 \left ( \frac{r}{c},L \right ).
\end{align}
In \cite{sczech}, Sczech shows that the map $\Phi_L \colon \Gamma \longrightarrow \C$ defined by
\begin{align}
    \Phi_L \begin{pmatrix}
    a & b \\ c & d
    \end{pmatrix} = 
    \begin{cases}
    I \left ( \frac{a+d}{c} \right )G_2(L)-D(a,c,L) & \textrm{if} \quad c \neq 0, \nonumber \\
   I \left ( \frac{b}{d} \right )G_2(L) & \textrm{if} \quad c = 0
    \end{cases}
\end{align}
is a cocycle.

\begin{rmk}   
We will show in Theorem \ref{eisensteinszcech} that $\Phi_{\afrak}(\gamma)=\chi(\afrak)\int_{u_0}^{\gamma u_0}\Ehat_{\chi,\afrak}$ for a certain closed form $\Ehat_{\chi,\afrak}$ in $\Omega^1(Y_\Gamma)$. Since the form is closed, the integral is independent of the basepoint $u_0$ and we have
\begin{align}
    \Phi_{\afrak}(\gamma_1\gamma_2) & = \int_{u_0}^{\gamma_1 \gamma_2 u_0}\Ehat_{\chi,\afrak} \nonumber \\
    & =\int_{\gamma_2 u_0}^{\gamma_1 \gamma_2 u_0}\Ehat_{\chi,\afrak}+\int_{u_0}^{\gamma_2 u_0}\Ehat_{\chi,\afrak} \nonumber \\
   & =\int_{u_0}^{\gamma_1 u_0}\Ehat_{\chi,\afrak}+\int_{u_0}^{\gamma_2 u_0}\Ehat_{\chi,\afrak} \nonumber \\
   & = \Phi_{\afrak}(\gamma_1) + \Phi_{\afrak}(\gamma_2).
\end{align}
This gives an alternative proof of the cocycle property of the Sczech cocycle $\Phi_\afrak$.
\end{rmk}

Furthermore, Sczech proved that the cocycle is integral. The following is proved in \cite[Satz.~4]{sczech}.

\begin{prop}[Sczech] \label{sczechintegral} For any $\gamma$ in $\Gamma$, the value $2\Phi_{L_\afrak}(\gamma)$ is an algebraic integer in $\Ocal_H$.
\end{prop}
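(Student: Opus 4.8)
Following Sczech's argument, the plan is to split $\Phi_{L_\afrak}(\gamma)$ into a ``parabolic'' piece built from the weight-two value $G_2(L_\afrak)$ and a Dedekind-sum piece built from products of weight-one values $G_1(r/c,L_\afrak)$ at $c$-torsion points, to control the denominators of the two pieces separately, and then to exhibit a cancellation, at the primes dividing $c$, between them. The factor $2$ is already forced by the abelian case: if $c=0$ then $ad=1$ and the absence of non-trivial units gives $a=d=\pm1$, so $b/d\in\Ocal$ and $I(b/d)=v\sqrt{D}$ for the integer $v$ recording the $\sqrt{D}$-component of $b/d$; hence $2\Phi_{L_\afrak}(\gamma)=\pm v\bigl(2\sqrt{D}\,G_2(L_\afrak)\bigr)$, which lies in $\Ocal_H$ by Damerell's theorem (Proposition~\ref{eisint}). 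Thus $2$ is precisely the factor needed to absorb the normalization in Proposition~\ref{eisint}.

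For $c\neq0$, put $t=a+d\in\Ocal$. Since $\Ocal$ is stable under complex conjugation, $t\bar c\in\Ocal$ and $I(t/c)=I(t\bar c)/\N(c)$ with $I(t\bar c)\in\sqrt{D}\,\Z$ and $\N(c)\in\Z_{>0}$; writing $I(t\bar c)=m\sqrt D$, the term $I\bigl(\frac{a+d}{c}\bigr)G_2(L_\afrak)=\frac{m}{2\N(c)}\bigl(2\sqrt{D}\,G_2(L_\afrak)\bigr)$ has denominator supported on the primes dividing $2\N(c)$. For the Dedekind sum one first checks --- this is part of Sczech's computation, via the expression of $G_1(z,L)$ through the Weierstrass $\zeta$-function together with its quasi-periods and real-analytic correction term --- that the transcendental contributions cancel in $\frac1c\sum_{r\in L_\afrak/cL_\afrak}G_1(ar/c,L_\afrak)G_1(r/c,L_\afrak)$, so that $D(a,c,L_\afrak)$ is algebraic; rewriting this via the addition law in terms of $\wp$-division values $\wp(r/c,L_\afrak)$ and $G_1$-data of smaller level, and invoking the theory of complex multiplication together with the distribution relations for $G_1$, its denominators are again supported on the primes dividing $c$ (after the normalization of Proposition~\ref{eisint}). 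Hence $2\Phi_{L_\afrak}(\gamma)$ is integral at every prime not dividing $2\N(c)$.

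The remaining step --- clearing the primes $\q\mid c$ --- is where I expect the real difficulty: neither $I\bigl(\frac{a+d}{c}\bigr)G_2(L_\afrak)$ nor $D(a,c,L_\afrak)$ is individually $\q$-integral for such $\q$, and the content of the theorem is that their difference is. To see the cancellation I would establish a reciprocity law for these elliptic Dedekind sums --- an explicit identity expressing $D(a,c,L)$ through $D$ at ``smaller'' data plus $G_2$-terms with controlled denominators, in analogy with Dedekind--Rademacher reciprocity --- and combine it with the elementary value of $\Phi_{L_\afrak}$ on unipotent matrices, namely $I(n)\,G_2(L_\afrak)$, and with the cocycle relation $\Phi_{L_\afrak}(\gamma_1\gamma_2)=\Phi_{L_\afrak}(\gamma_1)+\Phi_{L_\afrak}(\gamma_2)$, so as to rewrite $\Phi_{L_\afrak}(\gamma)$ in a form where the $\q$-denominators for $\q\mid c$ telescope away. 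For the Euclidean fields ($D=-7,-8,-11$) this telescoping is literally the Euclidean algorithm on the pair $(a,c)$; in general --- in particular for the class-number-one fields for which $\SL_2(\Ocal)$ is not generated by elementary matrices, and in the higher class-number case --- the same reduction must be carried out locally, clearing the $\q$-part of the denominator of $D(a,c,L_\afrak)$ against that of the Eisenstein term for each $\q\mid c$ separately, using the distribution relations and the functional equation~\eqref{funceq2}. Combining the $c=0$ case, the primes $\q\nmid c$, and these local clearances then gives $2\Phi_{L_\afrak}(\gamma)\in\Ocal_H$; the genuine obstacle is to make the reciprocity law sharp enough that the cancellation of $\q$-denominators becomes visible.
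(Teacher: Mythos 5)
The paper does not actually prove this proposition: it is imported wholesale from Sczech's paper (Satz~4 of \cite{sczech}), so there is no internal argument to compare yours against, and the honest options are either to cite Sczech as the paper does or to genuinely reprove his theorem. Judged on its own terms, your proposal gets the easy parts right: for $c=0$ the reduction to Damerell's integrality of $2\sqrt{D}\,G_2(L_\afrak)$ (Proposition~\ref{eisint}) is correct, and for $c\neq 0$ the observation that $I\bigl(\tfrac{a+d}{c}\bigr)G_2(L_\afrak)=\tfrac{m}{2\N(c)}\bigl(2\sqrt{D}\,G_2(L_\afrak)\bigr)$ has denominator supported on the primes dividing $2\N(c)$ is also correct.

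But what you have written is a research plan, not a proof. Two essential steps are asserted rather than established. First, the claim that the transcendental contributions cancel in $D(a,c,L_\afrak)$ and that this Dedekind sum is algebraic with denominator supported on the primes above $c$ is nontrivial: it requires the full theory of the weight-one Eisenstein--Kronecker values $G_1(r/c,L_\afrak)$ at torsion points (elliptic units, quasi-periods, distribution relations), none of which you carry out. Second --- and this is the entire content of Satz~4 --- the cancellation of the $\q$-denominators of $I\bigl(\tfrac{a+d}{c}\bigr)G_2(L_\afrak)$ against those of $D(a,c,L_\afrak)$ for $\q\mid c$ is exactly the step you defer, by appeal to a reciprocity law for elliptic Dedekind sums that you neither state nor prove; the Euclidean-algorithm telescoping you invoke is only available for the few Euclidean rings $\Ocal$, and for general $\Ocal$ (where $\SL_2(\Ocal)$ need not be generated by elementary matrices) you offer no mechanism beyond saying the reduction ``must be carried out locally.'' Note also that you cannot shortcut this via the later material of the paper: Theorem~\ref{eisensteinszcech} realizes $\Phi_\afrak$ as a period of the closed form $\Ehat_{\chi,\afrak}$, but the integrality of those periods is precisely what the corollary following that theorem \emph{deduces from} the present proposition, so arguing in that direction would be circular. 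As it stands the central difficulty is named but not resolved, and the proposal does not constitute a proof.
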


\subsection{Hecke characters and $L$-functions}
Let $\chi$ be an unramified algebraic Hecke character of infinity type $(k,j)$. Let $F_\chi$ be the finite extension of $H$ obtained by adjoining the image of $\chi$. So $\chi$ is a character
\begin{align}
    \chi \colon \Ical_K \longrightarrow F_\chi^\times \subset \C^\times
\end{align}
on the group $\Ical_K$ of fractional ideals of $K$ such that on principal ideals $(\alpha)$ in $K$ we have 
\begin{align}
    \chi((\alpha))=\alpha^{k}\bar{\alpha}^j.
\end{align}

Let $F$ be a Galois extension of $H$ containing $F_\chi$ and the $h$-th roots of unity. If $\chi$ and $\widetilde{\chi}$ are two Hecke characters of the same type, then there is a character $\varphi \in \widehat{\Cl(K)}$ on the class group such that $\widetilde{\chi}=\varphi \chi$. Thus $F$ is a field containing all the fields $F_\chi$ as $\chi$ varies over all the Hecke characters as above.

\begin{prop} \label{proplambda}
For any fractional ideal $\afrak$ and any Hecke character $\chi$ of infinity type $(-2,0)$, the value $\chi(\afrak)\lambda(\afrak)^2$ is a unit in $\Ocal_{F}$.
\end{prop}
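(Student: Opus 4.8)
The plan is to show that the ideal $\chi(\afrak)\lambda(\afrak)^2\Ocal_F$ is trivial by showing both that $\chi(\afrak)\lambda(\afrak)^2\in\Ocal_F$ and that its inverse lies in $\Ocal_F$, using the two integrality statements already available (Proposition \ref{eisint} and the transformation law for $g_2$) together with the multiplicativity properties of $\lambda$. First I would reduce to the case where $\afrak$ is integral and coprime to a fixed bad set of primes, exploiting property (2) of $\lambda$ ($\lambda(\alpha\afrak)=\alpha\lambda(\afrak)$) and the fact that $\chi((\alpha))=\alpha^{-2}$: if $\afrak=(\alpha)\bfrak$ then $\chi(\afrak)\lambda(\afrak)^2=\chi(\bfrak)\lambda(\bfrak)^2$, so the quantity depends only on the ideal class of $\afrak$, and it suffices to check it on a set of representatives — or even, since the values form a group under the cocycle relation (3), on a generating set of $\Cl(K)$.

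The key computation is to relate $\chi(\afrak)\lambda(\afrak)^2$ to a ratio of Eisenstein values that are known to be algebraic integers. Recall $L_\afrak=\lambda(\afrak)\afrak^{-1}L_\Ocal$ and the homogeneity $G_2(\alpha z,\alpha L)=\alpha^{-2}G_2(z,L)$ from \eqref{propeis}. Applying this with $\alpha=\lambda(\afrak)$ and the lattice $\afrak^{-1}L_\Ocal$ gives
\begin{align}
    \lambda(\afrak)^2 G_2(L_\afrak)=G_2(\afrak^{-1}L_\Ocal).
\end{align}
Next I would use the standard homogeneity-plus-ideal formula $G_2(\afrak^{-1}L_\Ocal)=\chi(\afrak)^{-1}\cdot(\text{an algebraic integer in }\Ocal_H)$ up to the factor $2\sqrt D$; more precisely, by Proposition \ref{eisint} both $2\sqrt D\,G_2(L_\afrak)$ and $2\sqrt D\,G_2(L_\Ocal)$ lie in $\Ocal_H$, and the Galois-equivariance $G_2(L_\afrak)=G_2(L_\Ocal)^{\sigma_\afrak}$ shows that $2\sqrt D\,G_2(L_\afrak)$ is a Galois conjugate of a fixed element of $\Ocal_H$. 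Combining with the displayed identity, $\chi(\afrak)\lambda(\afrak)^2$ becomes the ratio $\bigl(2\sqrt D\,G_2(L_\Ocal)\bigr)/\bigl(2\sqrt D\,G_2(L_\afrak)\bigr)^{\!}$ times $\chi(\afrak)\chi(\afrak)^{?}$ — I would track the exact $\chi$-powers using that $G_2$ of a fractional ideal transforms by $\chi$ under the CM action — and both numerator and denominator are algebraic integers, so $\chi(\afrak)\lambda(\afrak)^2\in\Ocal_F$. Running the same argument with $\afrak$ replaced by $\afrak^{-1}$ and invoking property (3) of $\lambda$ (which gives $\lambda(\afrak)\lambda(\afrak^{-1})^{\sigma_\afrak}=\lambda((1))=1$ up to a unit) shows the inverse is also integral, hence $\chi(\afrak)\lambda(\afrak)^2\in\Ocal_F^\times$.

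The main obstacle I anticipate is \emph{keeping the normalizations consistent}: the period $\Omega(\afrak)$, hence $\lambda(\afrak)$, is only well-defined up to a unit, and likewise the choice of embeddings and the factor $2\sqrt D$ must be threaded through the Galois action $\sigma_\afrak$ correctly so that the claimed ratio is genuinely a ratio of \emph{conjugate} algebraic integers and not merely a ratio of elements of $H$. A secondary subtlety is checking that no prime of $\Ocal_F$ divides $2\sqrt D\,G_2(L_\afrak)$ to a different order than $2\sqrt D\,G_2(L_\Ocal)$; this should follow because the nonvanishing of $G_2(L_\Ocal)$ (equivalently $L^{\alg}(\chi,0)\neq 0$) and the Galois-conjugacy force all these values to generate the same ideal in $\Ocal_H$ — but one must phrase this carefully using that $\lambda(\afrak)\in H^\times$ is a global unit times the expected ratio, which is exactly what properties (1)–(3) encode. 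Once those bookkeeping points are settled, the statement follows formally.
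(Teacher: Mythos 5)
There is a genuine gap at the heart of your argument, and it is the step you flag as a ``secondary subtlety.'' You reduce the claim to showing that the ratio $\bigl(2\sqrt{D}\,G_2(L_\Ocal)\bigr)\big/\bigl(2\sqrt{D}\,G_2(L_\afrak)\bigr)$ of Galois-conjugate algebraic integers is a unit. But a ratio $x/x^{\sigma}$ with $x\in\Ocal_H$ is a unit if and only if the principal ideal $x\Ocal_H$ is $\sigma$-stable, since $x^{\sigma}\Ocal_H=\sigma(x\Ocal_H)$; Galois conjugacy alone only tells you the two ideals are conjugate, not equal. There is no reason for the ideal generated by $2\sqrt{D}\,G_2(L_\Ocal)$ to be Galois-stable --- these Eisenstein values are exactly the quantities whose nontrivial prime factorizations drive the whole paper --- so the conclusion does not follow. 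The intermediate claim ``$G_2(\afrak^{-1}L_\Ocal)=\chi(\afrak)^{-1}\cdot(\text{an algebraic integer in }\Ocal_H)$'' is likewise unjustified: after multiplying by $2\sqrt D$ it is essentially the integrality of the individual terms of $L^{\integ}(\chi,0)$, which the paper deduces in Proposition \ref{Lalgint} \emph{from} Proposition \ref{proplambda}, so taking it as an input here is circular. Your reduction to ideal classes via $\lambda(\alpha\afrak)=\alpha\lambda(\afrak)$ and $\chi((\alpha))=\alpha^{-2}$ is correct but does not bridge this gap.

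The paper's proof avoids the problem by replacing $G_2$ with the discriminant $\Delta$. The point is that $\Delta(L_\Ocal)=12^6D^3u$ with $u\in\Ocal_H^\times$, so the ideal $\Delta(L_\Ocal)\Ocal_H=(12^6D^3)\Ocal_H$ is generated by an element of $K$ and hence \emph{is} Galois-stable; consequently $c(\sigma)=\Delta(L_\Ocal)/\Delta(L_\Ocal)^{\sigma}=u/u^{\sigma}$ genuinely is a unit. Combining $\Delta(\alpha L)=\alpha^{-12}\Delta(L)$ with the classical formula $\bigl(\Delta(\afrak^{-1})/\Delta(\Ocal)\bigr)\Ocal_H=\afrak^{12}\Ocal_H$ (Lang, de Shalit) then gives $\lambda(\afrak)^{12}\Ocal_H=\afrak^{12}\Ocal_H$, hence $\lambda(\afrak)\Ocal_H=\afrak\Ocal_H$ by comparing prime factorizations; the separate identity $\chi(\afrak)\Ocal_F=\afrak^{-2}\Ocal_F$ comes from raising to the $h$-th power and using that $\afrak^h$ is principal. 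Multiplying the two ideal equalities yields the proposition. If you want to salvage your outline, you would need to substitute some input of this kind --- an exact determination of the ideal $\lambda(\afrak)\Ocal_H$ --- rather than an integrality statement about $G_2$-values.
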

\begin{proof}
 For $\sigma \in \Gal(H/K)$ we have $\Delta(L_\Ocal)^\sigma=12^6D^3u^{\sigma}$ where $u^{\sigma} \in \Ocal_H^\times$ is a unit. Hence
\begin{align}
c(\sigma) \coloneqq \frac{\Delta(L_\Ocal)}{\Delta(L_\Ocal)^\sigma}=\frac{u}{u^\sigma}   
\end{align}
is a unit. When $\sigma=\sigma_\afrak$ then $c(\sigma_\afrak)=\Delta(L_\Ocal)/\Delta(L_\afrak)$ . 
 Since $\Delta(\alpha L_\Ocal)=\alpha^{-12}\Delta(L_\Ocal)$, we have
    \begin{align} \label{lambdaeq}
       c(\sigma_\afrak)= \Delta(L_\Ocal)/\Delta(L_\afrak) =\frac{\Omega^{-12}\Delta(\Ocal)}{\Omega(\afrak)^{-12}\Delta(\afrak^{-1})}=\lambda(\afrak)^{12} \frac{\Delta(\Ocal)}{\Delta(\afrak^{-1})}.
    \end{align}
Moreover, by \cite[p.49]{deshalit} or\footnote{Note that in \cite{langdioph} the result is proved for a split prime ideal $\p$. However, any class in $\Cl(K)$ contains such an ideal and equality \eqref{lambdaeq} only depends on the ideal class of $\afrak$.} \cite[Theorem.~5 p.165]{langdioph} we have 
    \begin{align}
        \frac{\Delta(\afrak^{-1})}{\Delta(\Ocal)} \Ocal_H=\afrak^{12} \Ocal_H.
    \end{align}
    Combining the two gives $\lambda(\afrak)^{12}\Ocal_H=\afrak^{12} \Ocal_H$, and by comparing the prime factorizations we get 
\begin{align} \label{idealequality1}
    \lambda(\afrak) \Ocal_H=\afrak\Ocal_H.
\end{align}
    
On the other hand, since $\afrak^h=\alpha \Ocal$ for some $\alpha \in K^\times$ and $h$ is the class number, we have
$\chi(\afrak)^h\Ocal_{F}=\afrak^{-2h}\Ocal_{F}$. By comparing the prime decomposition we then also get 
\begin{align} \label{idealequality2}
\chi(\afrak)\Ocal_{F}=\afrak^{-2}\Ocal_{F}. 
\end{align}
Combining \eqref{idealequality1} and \eqref{idealequality2} we obtain
\begin{align}
  \chi(\afrak)\lambda(\afrak)^2\Ocal_{F}=\Ocal_{F}.  
\end{align}
\end{proof}
Let $\chi$ be a Hecke character of infinity type $(k,j)$. The Hecke $L$-function
\begin{align}
    L(\chi,s) \coloneqq \sum_{0 \neq \afrak \subseteq \Ocal} \frac{\chi(\afrak)}{\N(\afrak)^{s}}
\end{align}
converges for $\re(s)>1+\frac{k+j}{2}$, admits a meromorphic continuation to the whole plane and a functional equation.

Let $\{\afrak_1,\dots,\afrak_h\}$ be integral ideal representatives of the class group of $K$. Every integral ideal $\afrak$ can be written $\afrak=(\alpha)\afrak_i$ for some $i \in \{1, \dots,h \}$, where $(\alpha) \subset \afrak_i^{-1}$. Hence we can write the $L$-function as
\begin{align}
    L(\chi,s) & = \sum_{i=1}^h \frac{\chi(\afrak_i)}{\N(\afrak_i)^{s}} \frac{1}{w(\afrak_i^{-1})}\sum_{\alpha \in \afrak_i^{-1}-\{0\}} \frac{\chi((\alpha))}{(\alpha \bar{\alpha})^s} \nonumber \\
    & = \sum_{i=1}^h \frac{\chi(\afrak_i)}{\N(\afrak_i)^{s}} \frac{1}{w(\afrak_i^{-1})} \sum_{\alpha \in \afrak^{-1}_i-\{0\}} \frac{\alpha^{k}\bar{\alpha}^j}{\vert \alpha \vert^{2s}}, \nonumber \\
    & = \sum_{i=1}^h \frac{\chi(\afrak_i)}{\N(\afrak_i)^{s}} \frac{1}{w(\afrak_i^{-1})} G\left (s-\frac{k+j}{2},j-k,0,0;\afrak^{-1}_i \right )
\end{align}
where $w(\afrak_i)=2^{\vert \Ocal^\times \cap \afrak_i \vert } \in \{1,2\}$ depends on the number of units in $\afrak_i$. 

From now on, let $\chi$ be of infinity type $(-2,0)$. At $s=0$ we then have
\begin{align} \label{Lfuncsplit}
    L(\chi,0)=\sum_{i=1}^h \frac{\chi(\afrak_i)}{w(\afrak_i)}G_2(\afrak^{-1}_i).
\end{align}
Note that \eqref{Lfuncsplit} does not depend on the choice of representatives.
We define the algebraic $L$-function
\begin{align} \label{Lalg}
    L^{\alg}(\chi,s) \coloneqq \Omega^{-2}L(\chi,s),
\end{align}
where $\Omega$ is our the canonical period \eqref{perioddef}. We also define the integral $L$-function
\begin{align}
    L^{\integ}(\chi,s)\coloneqq 4\sqrt{D}L^{\alg}(\chi,s).
\end{align}
The normalizations are chosen such that we have the following result.
\begin{prop} \label{Lalgint} The value $L^{\alg}(\chi,0)$ is an algebraic number in $F$ and nonzero. More precisely, the value $L^{\integ}(\chi,0)$ is in $\Ocal_F$.
\end{prop}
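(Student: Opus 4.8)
The plan is to combine the explicit evaluation \eqref{Lfuncsplit} of $L(\chi,0)$ with the homogeneity properties \eqref{propeis} of the Eisenstein series and with Propositions~\ref{eisint} and \ref{proplambda}. The first step is to rewrite each normalized Eisenstein value in terms of the ``good'' lattice $L_{\afrak_i}$: recalling from Section~\ref{seccanperiod} that $L_{\afrak}=\lambda(\afrak)\,\afrak^{-1}L_{\Ocal}=\lambda(\afrak)\,\Omega\,\afrak^{-1}$, specializing the homogeneity property \eqref{propeis} to $z=0$ and applying it with $\alpha=\lambda(\afrak)\Omega$ and $L=\afrak^{-1}$ gives $\Omega^{-2}G_2(\afrak^{-1})=\lambda(\afrak)^2\,G_2(L_{\afrak})$. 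Plugging this into \eqref{Lfuncsplit} and using the definition \eqref{Lalg} of $L^{\alg}$, one obtains
\begin{align*}
L^{\alg}(\chi,0)=\Omega^{-2}L(\chi,0)=\sum_{i=1}^{h}\frac{\chi(\afrak_i)\,\lambda(\afrak_i)^2}{w(\afrak_i)}\,G_2(L_{\afrak_i}),
\end{align*}
a finite sum which, by the remark following \eqref{Lfuncsplit}, is independent of the chosen representatives $\afrak_1,\dots,\afrak_h$.

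For the integrality assertion, multiply by $4\sqrt{D}$ and regroup the $i$-th summand of $L^{\integ}(\chi,0)=4\sqrt{D}\,L^{\alg}(\chi,0)$ as
\begin{align*}
\frac{2}{w(\afrak_i)}\cdot\bigl(\chi(\afrak_i)\lambda(\afrak_i)^2\bigr)\cdot\bigl(2\sqrt{D}\,G_2(L_{\afrak_i})\bigr).
\end{align*}
Here $\tfrac{2}{w(\afrak_i)}\in\{1,2\}\subset\Z$; the middle factor $\chi(\afrak_i)\lambda(\afrak_i)^2$ is a unit in $\Ocal_F$ by Proposition~\ref{proplambda}; and $2\sqrt{D}\,G_2(L_{\afrak_i})\in\Ocal_H\subseteq\Ocal_F$ by Proposition~\ref{eisint} (Damerell). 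Hence every summand lies in $\Ocal_F$ and $L^{\integ}(\chi,0)\in\Ocal_F$. Since $\sqrt{D}\in K\subseteq F$, it follows in particular that $L^{\alg}(\chi,0)=\tfrac{1}{4\sqrt{D}}L^{\integ}(\chi,0)\in F$.

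It remains to prove non-vanishing, and here I would avoid the sum above — whose individual terms need not be controlled — and argue directly on the $L$-function. Twisting by the norm, $\psi\coloneqq\chi\cdot\N_{K/\Q}$ is a Hecke character of $K$ of infinity type $(-1,1)$, hence unitary (its absolute value is $1$ on principal ideals, and therefore on all ideals) and of infinite order, and one checks $L(\psi,w)=L(\chi,w-1)$, so $L(\chi,0)=L(\psi,1)$. Now $s=1$ is the right edge of the critical strip of the unitarily normalized $L$-function $L(\psi,\cdot)$, where $L(\psi,1)\neq 0$ by the classical non-vanishing of Hecke $L$-functions on the line $\re(s)=1$ for non-trivial unitary characters (the generalization of $\zeta(1+it)\neq0$). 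Since $\Omega\in\C^{\times}$, this gives $L^{\alg}(\chi,0)=\Omega^{-2}L(\chi,0)\neq0$, completing the proof.

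The entire arithmetic content is carried by the two cited inputs — Damerell's integrality $2\sqrt{D}\,G_2(L_{\afrak})\in\Ocal_H$ and the relation $\chi(\afrak)\lambda(\afrak)^2\in\Ocal_F^{\times}$ — together with the non-vanishing on $\re(s)=1$; the rest is bookkeeping. The only point that requires genuine care is that the powers of $2$ hidden in $w(\afrak_i)$ do not spoil integrality: the tight case is $w(\afrak_i)=2$, where the $i$-th summand of $L^{\integ}(\chi,0)$ is exactly a unit times $2\sqrt{D}\,G_2(L_{\afrak_i})$, so one really needs the $2\sqrt D$-version of Proposition~\ref{eisint} and not a weaker $\sqrt D$-version. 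I expect this — confirming that the normalizing constant in $L^{\integ}$ is precisely $4\sqrt{D}$ — to be the main (mild) obstacle; everything else is routine.
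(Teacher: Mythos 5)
Your proof is correct. The integrality half is essentially the paper's own argument: rewrite each summand of \eqref{Lfuncsplit} via the homogeneity $\Omega^{-2}G_2(\afrak_i^{-1})=\lambda(\afrak_i)^2G_2(L_{\afrak_i})$, then observe that $\tfrac{2}{w(\afrak_i)}\in\{1,2\}$, that $\chi(\afrak_i)\lambda(\afrak_i)^2$ is a unit by Proposition~\ref{proplambda}, and that $2\sqrt{D}\,G_2(L_{\afrak_i})\in\Ocal_H$ by Proposition~\ref{eisint}; your remark that the case $w(\afrak_i)=2$ forces the full strength of Damerell's $2\sqrt{D}$-statement is exactly the point of the chosen normalization. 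Where you genuinely diverge is the non-vanishing. The paper twists by $\N(\afrak)^2$ to get a character of type $(0,2)$, conjugates to the type-$(2,0)$ character $\Ccal$, applies the functional equation to write $L(\chi,0)=\tfrac{2\pi}{\sqrt{|D|}}L(\Ccal,1)$, and then invokes the argument of Greenberg for the non-vanishing of $L(\Ccal,1)$. You instead unitarize by a single power of the norm: $\psi=\chi\cdot\N$ has infinity type $(-1,1)$, is unitary of infinite order, and $L(\chi,0)=L(\psi,1)$ sits on the edge $\re(s)=1$, where Hecke's classical non-vanishing theorem for non-principal unitary Gr\"ossencharacters applies (and since $\psi$ has infinite order there is no exceptional real-character issue). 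Both routes are valid; yours trades the functional equation plus the citation of Greenberg for a softer and more standard analytic input, at the cost of not producing the explicit identity $L(\chi,0)=\tfrac{2\pi}{\sqrt{|D|}}L(\Ccal,1)$ that the paper records along the way.
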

\begin{proof} The integrality statement is due Damerell and follows from Proposition \ref{eisint}. By the homogeneity of $G_2$ we have $G_2(\afrak_i^{-1})=\Omega(\afrak_i)^2G_2(L_{\afrak_i})$ and hence
\begin{align}
    4 \sqrt{D}\Omega^{-2}L(\chi,0)=\sum_{i=1}^h \chi(\afrak_i)\lambda(\afrak_i)^2\frac{2}{w(\afrak_i^{-1})}2\sqrt{D}G_2(L_{\afrak_i}).
\end{align}
and all the terms are algebraic. 
Furthermore, by Proposition \ref{proplambda} we have that $\chi(\afrak_i)\lambda(\afrak_i)^2$ is a unit in $\Ocal_F$. The integrality of $L^{\integ}(\chi,0)$ follows since $2\sqrt{D}G_2(L_{\afrak_i})$ is in $\Ocal_H$ by Proposition \ref{eisint}, and $w(\afrak_i^{-1})=1$ or $2$.

The character $\chi$ has infinity type $(-2,0)$. Hence the Hecke character $\chi(\afrak)\N(\afrak)^2$ has type $(0,2)$, and $\Ccal(\afrak) \coloneqq \overline{\chi(\afrak)\N(\afrak)^2}$ has type $(2,0)$. Then we have
\begin{align}
    L(\chi,0)=L(\chi(\afrak)\N(\afrak)^2,2)= \frac{2\pi}{\sqrt{\vert D \vert}}L(\Ccal,1),
\end{align}
where the second equality is the functional equation; see \cite{Lgreenberg}. By arguing as in \cite{Lgreenberg} (in the case $n=2$), we have that $L(\Ccal,1)$ is nonzero.
\end{proof}

Composing with the norm, one gets a Hecke character on $H$
\begin{align}
    \chi \circ \N_{H/K} \colon \Ical_H \longrightarrow F^\times \subset \C^\times
\end{align}
of infinity type $(-2,0)$, where for any fractional ideal $\bfrak$ of $H$
\begin{align}
 \N_{H/K}(\bfrak)= K \cap \prod_{\sigma \in \Gal(H/K)} \sigma(\bfrak)  
\end{align}
is the relative ideal norm. We can then also consider the $L$-function
\begin{align}
 L(\chi \circ \N_{H/K},s)= \sum_{0 \neq \bfrak \subseteq \Ocal_H} \frac{\chi(\N_{H/K}(\bfrak))}{\N(\bfrak)^{s}}=\prod_{\varphi \in \widehat{\Cl(K)}}L(\varphi\chi,s).
\end{align}
It follows from Proposition \ref{Lalgint} that
\begin{align}
   L^{\alg}(\chi \circ \N_{H/K},0) \coloneqq \Omega^{-2h}L(\chi \circ \N_{H/K},0)
\end{align}
is a nonzero algebraic number in $F$ and that
\begin{align}
   L^{\integ}(\chi \circ \N_{H/K},0) \coloneqq 4^hD^{\frac{h}{2}}L^{\alg}(\chi \circ \N_{H/K},0)
\end{align}
is an algebraic integer in $\Ocal_F$.

\begin{rmk}
   Proposition \ref{Lalgint} on the integrality of $L^{\integ}(\chi,0)$ for a character of type  $(k,j)=(-2,0)$ is due to Damerell \cite{damerell}. Other integrality results are also know for more general characters of type $(k,j)$ with $k<0$ and $j \geq 0$ (so called {\em critical type}). We refer to \cite[Theorem.~3]{berger_2009} for a summary of $p$-integrality results due to Shimura, Coates-Wiles, Katz, Hida, Tilouine, de Shalit, Rubin and others. More recently, Kings and Sprang show in \cite{KS} that for any pair of coprime ideals in $K$ we have
    \begin{align}
     (1-\chi(\cfrak'))(\chi(\cfrak)\N(\cfrak)-1) L^{\alg}(\chi,0) \in \Ocal_{F_\chi} \left [ \frac{1}{d_K\N(\cfrak\cfrak')}\right ]
    \end{align}
    where
    \begin{align}
      L^{\alg}(\chi,0) \coloneqq \frac{(-k-1)!\Omega^k(2i\pi)^j}{(\Omega^\vee)^j}  L(\chi,0) 
    \end{align}
    for a suitable choice of complex periods $\Omega$ and $\Omega^\vee$ and $d_K$ is the discriminant of $K$.
\end{rmk}

\section{Eisenstein cohomology} \label{eisensteincohom}

Let $\HH_3$ be the hyperbolic $3$-space
\begin{align}
\HH_3 \coloneqq \left \{ u=z+jv \mid z \in \C, v \in \R_{>0} \right \},
\end{align}
where $ij=-ji$ and $i^2=j^2=-1$. For $u=z+jv$ let $\overline{u}=\overline{z}-jv$ and $\vert u \vert = u \overline{u}= \vert z \vert ^2 + v^2$. The group $\SL_2(\C)$ acts transitively on $\HH_3$ by 
\begin{align}
u \longmapsto(au+b)(cu+d)^{-1}=\frac{(au+b)(\overline{cu+d})}{\vert cz+d\vert^2+v^2},
\end{align}
and the stabilizer of $j$ is $\SU(2)$. Hence the symmetric space $\SL_2(\C)/\SU(2)$ is isomorphic to $\HH_3$. For a fractional ideal $\bfrak$ of $K$ let
\begin{align}
    \Gamma(\bfrak) \coloneqq \left \{ \left . \begin{pmatrix}
    a & b \\ c & d
    \end{pmatrix} \in \SL_2(K) \right \vert a,d \in \Ocal, b \in \bfrak, c \in \bfrak^{-1} \right \}
\end{align}
be the subgroup of $\SL_2(K)$ preserving $\Ocal \oplus  \bfrak$ by right multiplication. Let $\Gamma \coloneqq \Gamma(\Ocal)=\SL_2(\Ocal)$ and
\begin{align}
 Y_\Gamma \coloneqq \Gamma \backslash \HH_3.
\end{align} 
This space is a non-compact $3$-dimensional orbifold and can be compactified in several ways, one of them is the Borel-Serre compactification.

\subsection{Borel-Serre compactification}

We describe the Borel-Serre compactification of $Y_\Gamma$; see \cite{borelji} or \cite{jimp} for more on compactifications of locally symmetric spaces. We define the space
\begin{align}
 \HH_3^{\ast}\coloneqq \HH_3 \cup \bigsqcup_{r \in \PP^1(K)} \Hcal_r
\end{align}
where $\Hcal_r=\PP^1(\C)-\{r\}$.  We have a canonical map
\begin{align} \label{Hrmap}
    \Hcal_r & \longrightarrow \C \nonumber \\
    (x:y) & \longmapsto \frac{mx}{my-nx}
\end{align}
where $r=(m : n)$. Hence we can view the space $\HH_3^\ast$ as adding a copy of $\C$ at every point $\frac{m}{n}$ on the boundary of $\HH_3$. The topology on $\HH_3^\ast$ is defined as follows: let $\Hcal_\8$ be the boundary component at $\8$ corresponding to $r=(1:0)$. A sequence $u_k =z_k+jv_k$ converges to $(z_0:1) \in \Hcal_\8$ if $\lim_{k \rightarrow \8} v_k=\8$ and $\lim_{k \rightarrow \8} z_k=z_0$. If $\gamma$ maps $\8$ to $r$ then $u_k$ converges to $(x:y) \in \Hcal_r$ if $\gamma^{-1}u_k$ converges to $\gamma^{-1}(x:y) \in \Hcal_\8$. The action of $\Gamma$ extends to $\HH_3^{\ast}$ by sending $z \in \Hcal_r$ to $\gamma z \in \Hcal_{\gamma r}$, where $\SL_2(\C)$ acts on $\PP^1(\C)$ by
\begin{align}
\begin{pmatrix} a & b \\ c & d  \end{pmatrix}(x : y)=(ax+by:cx+dy).  
\end{align}
We define $X_\Gamma \coloneqq \Gamma \backslash \HH_3^{\ast}$. Let $C_\Gamma  \coloneqq \Gamma \backslash \PP^1(K)$ be the set of cusps of $Y_\Gamma$. We can represent cusps by fractional ideals in $K$ since we have a bijection \cite[Theorem.~18]{berger_2009}
\begin{align} \label{clbij}
    C_\Gamma & \longrightarrow \Cl(K) \nonumber \\
    c=\Gamma (m:n) & \longmapsto [\afrak_c]
    \end{align}
where $\afrak_{c} \coloneqq (m)+(n)$ and $\Cl(K)$ is the class group. We have a bijection 
\begin{align}
    \Gamma \backslash \bigsqcup_{r \in \PP^1(K)} \Hcal_r & \longrightarrow \bigsqcup_{c=[r] \in C_\Gamma} \Gamma_r \backslash \Hcal_r \nonumber \\
    (x:y)  & \longmapsto \gamma (x:y)
\end{align}
where $(x:y) \in \Hcal_r$, the element $\gamma$ maps $(1:0)$ to some $[r] \in C_\Gamma$ and $\Gamma_r$ is the unipotent radical of the stabilizer $\left \{ \left . \gamma \in \Gamma \; \right \vert \; \gamma c=c \right \}$ of $r$ in $\Gamma$. 
The Borel-Serre compactification is 
\begin{align}
    X_\Gamma \coloneqq Y_\Gamma \cup \bigsqcup_{c=[r] \in C_\Gamma} \Gamma_r \backslash \Hcal_r,
\end{align}
and the boundary of the Borel-Serre compactification is 
\begin{align}
\partial X_\Gamma =  \bigsqcup_{c=[r] \in C_\Gamma} \Gamma_r \backslash \Hcal_r.   
\end{align}
Note that
\begin{align}
    \Gamma_\8 = \left \{ \left . \begin{pmatrix}
        1 & b \\ 0 & 1
    \end{pmatrix} \; \right \vert \; b\in \Ocal \right \} \simeq \Ocal,
\end{align}
hence
\begin{align} \label{amap}
    \Gamma_\8 \backslash \Hcal_\8 & \longrightarrow \C/\Ocal \nonumber \\
    \Gamma_\8 (z:1) & \longmapsto z+\Ocal.
\end{align}
Let $r=(m:n)$ and $M=\begin{pmatrix}
    y & -x \\ -n & m
\end{pmatrix}$ in $\SL_2(K)$ that maps $r$ to $\8$. We then have
\begin{align}
    M\Gamma_rM^{-1}=(M\Gamma M^{-1})_\8.
\end{align}
We define the fractional ideal $\afrak_M=(m)+(n)$, so that
\begin{align}
 M \in \begin{pmatrix}
\afrak_M^{-1} & \afrak_M^{-1} \\
\afrak_M & \afrak_M
 \end{pmatrix}   
\end{align}
and
\begin{align}
    M\Gamma M^{-1}=\Gamma(\afrak_M^{-2});
\end{align}
see for example \cite[p.~12]{geer}. In particular \[
     M\Gamma_rM^{-1} = (M\Gamma M^{-1})_\8 = \left \{ \left . \begin{pmatrix}
        1 & b \\ 0 & 1
    \end{pmatrix} \; \right \vert \; b\in \afrak_M^{-2} \right \}.
\]
Thus we have a map
\begin{align} \label{amap2}
    \Gamma_r \backslash \Hcal_r \longrightarrow (M\Gamma M^{-1})_\8 \backslash \Hcal_\8 \simeq \C/\afrak_M^{-2},
\end{align}
where the first map is $(z:1) \mapsto M(z:1)$ and the second is \eqref{amap}.

\subsection{Equivariant homology and cohomology}

Since $Y_\Gamma$ is not a manifold  but rather an orbifold, we need to work with equivariant (co)homology. 

\paragraph{Restriction of differential forms.} At $\8$ the boundary component of $\HH_3^\ast$ can be embedded inside $\HH_3$ as a horocycle by the map
\begin{align}\iota_v \colon \Hcal_\8\simeq \C & \hooklongrightarrow \HH_3 \nonumber \\ 
z & \longmapsto z+jv.
\end{align}
For a differential form $\omega$ in $\Omega^k(\HH_3)$ we define the restriction map
\begin{align}
\res_\8 \colon \Omega^k(\HH_3) & \longrightarrow \Omega^k(\Hcal_\8) \nonumber \\
    \omega & \longmapsto \res_{\8}(\omega) \coloneqq \lim_{v \rightarrow \8} i_v^\ast \omega
\end{align} to be the restriction of $\omega$ to the boundary component at the cusp $\8$. Note that $\iota_v$ is $\Ocal$-equivariant in the sense that for every $\gamma=\begin{pmatrix}
        1 & b \\ 0 & 1
    \end{pmatrix}$ in $\Gamma_\8$ we have 
\begin{align}
    \begin{pmatrix}
        1 & b \\ 0 & 1
    \end{pmatrix} \iota_v(z)=\iota_v(z+b).
\end{align}
Hence it descends to a map $\iota_v \colon \Gamma_\8 \backslash \Hcal_\8 \rightarrow Y_\Gamma$, and so does the restriction map:
\begin{align}
    \res_\8 \colon \Omega^k(Y_\Gamma) \longrightarrow \Omega^k(\Gamma_\8 \backslash \Hcal_\8).
\end{align}

The other boundary components $\Hcal_r$ can be embedded as horosphere at the other cusps. Let $M$ be a matrix in $\SL_2(K)$ sending $r$ to $\8$. We define the embedding
\begin{align}
    \iota_{v,M}(z) \coloneqq M^{-1} \circ \iota_v \circ M \colon \Hcal_r \longrightarrow \HH_3.
\end{align}
Its image is a horosphere, see figure \ref{horosphere}. If $N$ is another matrix sending $r$ to $\8$, then
\begin{align}
    M=\begin{pmatrix}
      a & b \\ 0 & a^{-1}  
    \end{pmatrix}N
\end{align}
and $\iota_{N,r}= M^{-1} \circ \iota_{v/a^2} \circ M$. Hence the embedding depends on the choice of $M$, but taking the limits gives a well-defined map
\begin{align}
\res_r \colon \Omega^k(\HH_3) & \longrightarrow \Omega^k(\Hcal_r) \nonumber \\
    \omega & \longmapsto \res_{r}(\omega) \coloneqq \lim_{v \rightarrow \8} i_{M,v}^\ast \omega,
\end{align}
which also descends to the quotient
\begin{align}
    \res_r \colon \Omega^k(Y_\Gamma) \longrightarrow \Omega^k(\Gamma_r \backslash \Hcal_r).
\end{align}
Finally, note that for any matrix $A$ in $\SL_2(K)$ and any form $\omega$ on $\HH_3$ we have
\begin{align} \label{equivariance}
    A^\ast \res_{Ar}(\omega)=\res_r(A^\ast \omega).
\end{align}
\begin{figure}[h]
\centering
\includegraphics[scale=0.22]{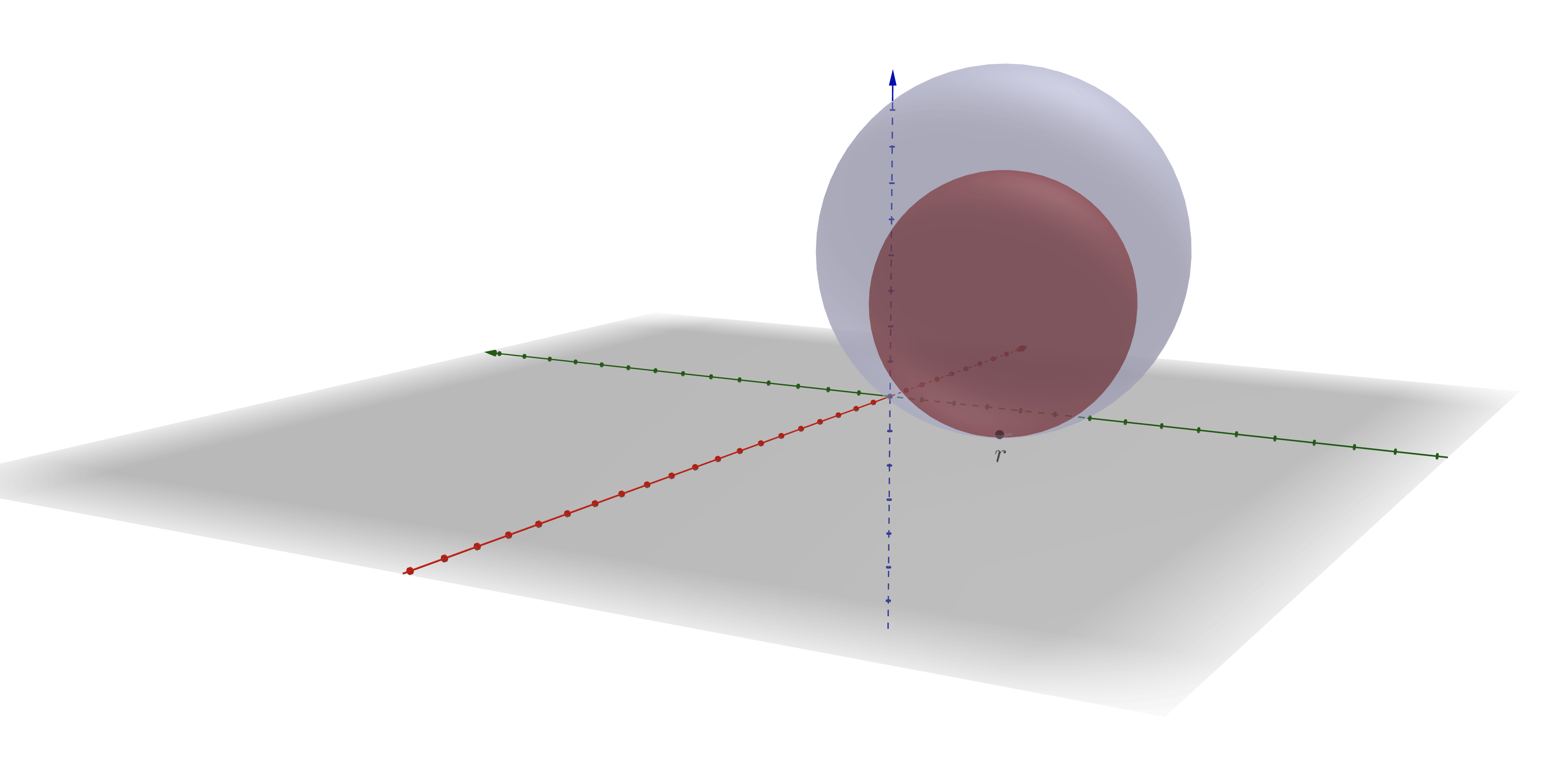}
\captionsetup{width=.75\textwidth,font={small,it},justification=centering}
\caption{The embedding of $\Hcal_r$ by $\iota_{r,v}$ is a horosphere in $\HH_3$, tangent to the plane $v=0$ at the cusp $r$. As $v$ increases, the radius of the sphere decreases. Hence we can see the boundary components as horospheres at infinity.}
\label{horosphere}
\end{figure}

\paragraph{Cohomology.} Following \cite[Section.~2]{stevens} we define a $k$-form $\omega$ on $\HH^{\ast}_3$ to be a $k$-form $\omega_0$ on $\HH_3$ and a family $k$-forms $\omega_{r}$ on $\Hcal_r$ such that $\res_r(\omega_0)=\omega_r$. We denote by $\Omega^k(\HH^\ast_3)$ the space of such forms. Let 
\begin{align}
\Omega^k(\HH^\ast_3;\C) \coloneqq \Omega^k(\HH^\ast_3) \otimes_\R \C    
\end{align} be the differential forms valued in $\C$. Let $\Omega^k(\HH^\ast_3;\C)^\Gamma$ be the complex of $\Gamma$-invariant forms, consisting of forms that satisfy $\gamma^\ast \omega_0=\omega_0$ and $\gamma^\ast \omega_{r}=\omega_{\gamma^{-1}r}$. Let $ H^k(X_\Gamma;\C) = H^k(\Omega^\bullet(\HH^\ast_3;\C)^\Gamma)$ be the cohomology of this complex. Similarly we have
\begin{align}
    H^k(\partial X_\Gamma; \C) =  \bigoplus_{c=[r] \in C_\Gamma} H^k (\Omega^\bullet(\Hcal_r;\C)^{\Gamma_r}) \simeq H^k (\Omega^\bullet(\partial X_\Gamma;\C)^{\Gamma}).
\end{align}
It follows from \eqref{equivariance} that the restriction map induces a map 
\begin{align} \label{resdef}
    \res \colon H^k(Y_\Gamma; \C) & \longrightarrow H^k(\partial X_\Gamma;\C) \nonumber \\
    \omega & \longmapsto (\res_r(\omega))_{c=[r]}.
\end{align}

\paragraph{Relative cohomology.} Let $\Omega^k(\HH_3^\ast,\partial \HH_3^\ast; \C)^\Gamma \coloneqq \Omega^{k}(\HH_3^\ast; \C)^\Gamma \oplus \Omega^{k-1}(\partial \HH_3^\ast;\C)^\Gamma$ be the complex with the coboundary operator
\begin{align}
    \delta \colon \Omega^k(\HH_3^\ast,\partial \HH_3^\ast; \C)^\Gamma & \longrightarrow \Omega^{k+1}(\HH_3^\ast,\partial \HH_3^\ast; \C)^\Gamma \nonumber \\
    (\omega,\theta) & \longmapsto (d\omega, \iota^\ast \omega-d \theta).
\end{align}
The cohomology $H^k(X_\Gamma,\partial X_\Gamma;\C)$ of $X_\Gamma$ relative to $\partial X_\Gamma$ is the cohomology associated to this complex. We have an exact sequence
\begin{align}
    0 \longrightarrow \Omega^{k-1}(\partial \HH_3^\ast;\C)^\Gamma \xhookrightarrow{\quad \alpha \quad} \Omega^k(\HH_3^\ast,\partial \HH_3^\ast; \C)^\Gamma \xrightarrow{\quad \beta \quad} \Omega^k(\HH_3^\ast;\C)^\Gamma \longrightarrow 0
\end{align}
where the first map is given by $\alpha(\theta)=(0,\theta)$ and the second by $\beta(\omega,\theta)=\omega$. This induces a long exact sequence in cohomology

\begin{equation} \label{les1}
\hspace*{-2.5cm}  \begin{tikzcd}
H^{k-1}(\partial X_\Gamma;\C) \rar["\alpha^\ast"] & H^k(X_\Gamma, \partial X_\Gamma;\C) \rar["\beta^\ast"] & H^k(X_\Gamma;\C) \ar["\res", out=-10, in=170]{dll} \\
H^{k}(\partial X_\Gamma;\C) \rar["\alpha^\ast"] & H^{k+1}(X_\Gamma, \partial X_\Gamma;\C) \rar["\beta^\ast"] & H^{k+1}(X_\Gamma;\C)
\end{tikzcd}    
\end{equation} and the boundary map $\res$ is the restriction to the boundary. 

\paragraph{Homology.} For an abelian group $A$ let $C_n(\HH_3;A)$ be the $\Z$-module of singular $n$-chains valued in $A$. The action of $\Gamma$ on $\HH_3$ endows $C_n(\HH_3;A)$ with a $\Gamma$-module structure and we define the complex of coinvariant chains
\begin{align}
    C_n(\HH_3;A)_\Gamma \coloneqq C_n(\HH_3;A)/ \bigl < \sigma - \gamma \sigma  \bigr >,
\end{align}
where we quotient by the submodule generated by all $\sigma - \gamma \sigma$ with $\sigma \in C_n(\HH_3;A)$. Let $H_n(Y_\Gamma;A)$ be the homology of this complex. We define $H_n(X_\Gamma;A)$ similarly.

Let $u_0 \in \HH_3$ be any basepoint and $[u_0,\gamma u_0]$ be a path joining $u_0$ and $\gamma u_0$ for $\gamma \in \Gamma$. The boundary of $[u_0,\gamma u_0]$ is $\gamma u_0-u_0$ hence it represents a class in $H_1(Y_\Gamma;\C)$.
\begin{prop} \label{surjectivegamma}
The map  $ \Gamma \longrightarrow H_1(Y_\Gamma;\Z)$ sending $\gamma$ to $[u_0,\gamma u_0]$ is a surjective morphism and independent of $u_0$.
\end{prop}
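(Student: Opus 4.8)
The plan is to decompose the statement into three separate claims: that the map $\gamma \mapsto [u_0,\gamma u_0]$ is a group homomorphism, that it is surjective onto $H_1(Y_\Gamma;\Z)$, and that it is independent of the basepoint $u_0$. The independence of $u_0$ is the easiest: if $u_0'$ is another basepoint, pick a path $[u_0, u_0']$; then the $1$-chain $[u_0,\gamma u_0] - [u_0', \gamma u_0']$ has boundary $(\gamma u_0 - u_0) - (\gamma u_0' - u_0')$, and comparing with $[u_0,u_0'] - \gamma[u_0,u_0'] = [u_0,u_0'] - [\gamma u_0, \gamma u_0']$ (which is a boundary in the coinvariant complex, since $\sigma - \gamma\sigma$ is killed by definition of $C_n(\HH_3;\Z)_\Gamma$) we see the two $1$-cycles differ by a boundary plus a coinvariant relation. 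Concretely, $[u_0,\gamma u_0]$ and $[u_0',\gamma u_0'] + [u_0,u_0'] - [\gamma u_0, \gamma u_0']$ are homologous, and the last two terms cancel in $C_1(\HH_3;\Z)_\Gamma$. So the class is independent of $u_0$.

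The homomorphism property uses the same idea as the remark following Proposition \ref{sczechintegral}, but at the level of chains rather than integrals. Concatenating paths, $[u_0, \gamma_1\gamma_2 u_0]$ is homologous (rel endpoints, hence as a $1$-cycle in $Y_\Gamma$) to $[u_0, \gamma_2 u_0] + [\gamma_2 u_0, \gamma_1\gamma_2 u_0]$. The second summand equals $\gamma_2 \cdot [u_0, \gamma_1 u_0]$, which in $C_1(\HH_3;\Z)_\Gamma$ is identified with $[u_0,\gamma_1 u_0]$. Hence $[u_0,\gamma_1\gamma_2 u_0] = [u_0,\gamma_1 u_0] + [u_0,\gamma_2 u_0]$ in $H_1(Y_\Gamma;\Z)$.

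For surjectivity, I would invoke the fact that $\HH_3$ is contractible, so $Y_\Gamma = \Gamma\backslash\HH_3$ is a $K(\Gamma,1)$ (in the orbifold sense — the relevant statement is that $H_\bullet(Y_\Gamma;\Z)$ as defined via the coinvariant complex computes the group homology $H_\bullet(\Gamma;\Z)$, since $C_\bullet(\HH_3;\Z)$ is a complex of $\Z[\Gamma]$-modules which is a resolution of $\Z$ and the coinvariants compute $\Tor$). Then $H_1(Y_\Gamma;\Z) \cong H_1(\Gamma;\Z) = \Gamma^{\ab}$, and under this identification the map $\gamma\mapsto[u_0,\gamma u_0]$ is precisely the abelianization map $\Gamma\to\Gamma^{\ab}$, which is visibly surjective. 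Alternatively, and more in the spirit of the preceding material: any $1$-cycle in $C_1(\HH_3;\Z)_\Gamma$ can be represented by a sum of paths; after subdividing and using the coinvariant relations to translate pieces back near $u_0$, and using contractibility of $\HH_3$ to replace each path by the straight (geodesic) path between its endpoints, one reduces any cycle to a $\Z$-combination of loops of the form $[u_0,\gamma u_0]$. The main obstacle is making this last reduction rigorous while correctly bookkeeping the orbifold/coinvariant relations — i.e. checking that the standard identification $H_\bullet(Y_\Gamma;\Z) \cong H_\bullet(\Gamma;\Z)$ holds with the coinvariant-chain definition used here, and that under it the stated map is the abelianization. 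Once that identification is in hand, all three parts are immediate.
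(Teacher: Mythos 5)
Your treatment of basepoint-independence and of the homomorphism property is correct and essentially the paper's own (both follow from the relation $[a,\gamma a]\sim[b,\gamma b]$ together with concatenation and coinvariance). The gap is in surjectivity. Your main route rests on the claim that the coinvariant complex $C_\bullet(\HH_3;\Z)_\Gamma$ computes $H_\bullet(\Gamma;\Z)$ ``since $C_\bullet(\HH_3;\Z)$ is a resolution of $\Z$ and the coinvariants compute $\operatorname{Tor}$''. That inference requires the resolution to be projective (or at least acyclic for the coinvariants functor), and here it is not: the action of $\Gamma=\SL_2(\Ocal)$ on $\HH_3$ is not free ($-I$ acts trivially, and elliptic elements fix points), so each $C_n(\HH_3;\Z)$ decomposes as a sum of modules $\Z[\Gamma/\Gamma_\sigma]$ with nontrivial finite stabilizers, which are not projective over $\Z[\Gamma]$. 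Integrally, the identification $H_\bullet(Y_\Gamma;\Z)\cong H_\bullet(\Gamma;\Z)$ can fail. What \emph{is} true --- via the edge map of the hyperhomology spectral sequence with $E^1_{pq}=H_q(\Gamma;C_p(\HH_3;\Z))$, using that all differentials out of $E^r_{1,0}$ land in the third quadrant --- is that $H_1(\Gamma;\Z)\to H_1(C_\bullet(\HH_3;\Z)_\Gamma)$ is surjective, which would suffice; but you have not supplied that argument, and you yourself flag the needed identification as the ``main obstacle'' without resolving it. As stated, the surjectivity step is not proved.

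The paper sidesteps all of this with a direct chain-level argument: it first establishes the relations $[a,a]\sim 0$, $[a,b]+[b,c]\sim[a,c]$, $[a,\gamma a]\sim[b,\gamma b]$ and $[a,b]\sim[\gamma a,\gamma b]$ in $C_1(\HH_3;\Z)_\Gamma$ by exhibiting explicit $2$-simplices; then, given a cycle $\sigma=\sum_i[a_i,b_i]$ with $n_i=1$, the condition $\partial\sigma=0$ in the coinvariants forces each endpoint $b_j$ to equal $\gamma_{ij}a_i$ for a unique $i$, defining a permutation of the index set, and an induction over the cycles of that permutation (summing consecutive paths via the relations above) collapses $\sigma$ to a single $[u_0,\gamma u_0]$. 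This is precisely the reduction you sketch as your alternative but leave incomplete; it is the step that carries the whole proof, so you should either carry it out or replace it with the edge-map argument above.
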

\begin{proof} If $Y_\Gamma$ were a manifold (for example if $\Gamma$ were some congruence subgroup of $\SL_2(\Ocal))$ we could work with singular homology. Then the map would be the Hurewicz homomorphism, which is surjective since $Y_\Gamma$ is path connected; see \cite[Theorem.~2A.1]{hatcher}.
In the case of equivariant homology it works almost in the same way and we follow the proof of \cite{hatcher}.

We write $[a,a']\sim [b,b']$ for two homologous paths joining points $a,a',b$ and $b'$ in $\HH_3$. We have the following relations:
\begin{itemize}
    \item[(1)] \quad $[a,a] \sim 0$,
    \item[(2)] \quad $[a,b]+[b,c] \sim [a,c]$
    \item[(3)] \quad $[a,b] \sim -[b,a]$
    \item[(4)] \quad  $[a,\gamma a] \sim [b , \gamma b ]$,
    \item[(5)] \quad $[a,b] \sim [\gamma a, \gamma b]$.
\end{itemize}
The first two ones are clear since $[a,a]$ is the boundary of the constant $2$-simplex $C=\{ a \}$ and $[a,b] - [a,c] +[b,c]$ is the boundary of the simplex joining $a,b$ and $c$. The third one follows from $(1)$ and $(2)$. For $(4)$ consider the two simplices $C_1$ and $C_2$ as in Figure \ref{simplicial1}. The boundaries are given by
\begin{align}
    \partial C_1 & = [\gamma a, \gamma b]-[a,\gamma b]+[a,\gamma a] \nonumber \\
    \partial C_2 & = [b,\gamma b ]-[a,\gamma b]+[a,b]
\end{align}
so that
\begin{align}
    \partial (C_1-C_2) & = [a,\gamma a]-[b;\gamma b] +[\gamma a, \gamma b ]-[a,b]= [a,\gamma a]-[b,\gamma b]
\end{align}
where $[\gamma a, \gamma b ]-[a,b]=0$ since we are in the complex of coinvariants. 
\begin{figure}[h] 
\centering
\begin{tikzpicture} 
\draw (0,0) -- (5,0) ;
\draw (0,0) -- (0,5) ;
\draw (0,0) -- (5,5) ;
\draw (5,0) -- (5,5) ;
\draw (0,5) -- (5,5) ;
\draw (0,0) node[below left]{$v_0=a$} ;
\draw (2,3) node[above]{$C_1$};
\draw (3,2) node[right]{$C_2$};
\draw (0,5) node[above left]{$v_1=\gamma a$} ;
\draw (5,0) node[below right]{$v_1=b$};
\draw (5,5) node[above right]{$v_1=\gamma b$};
\end{tikzpicture}
\captionsetup{width=.75\textwidth,font={small,it},justification=centering}
\caption{The equivalence $[a,\gamma a] \sim [b , \gamma b ]$.}
\label{simplicial1}
\end{figure}
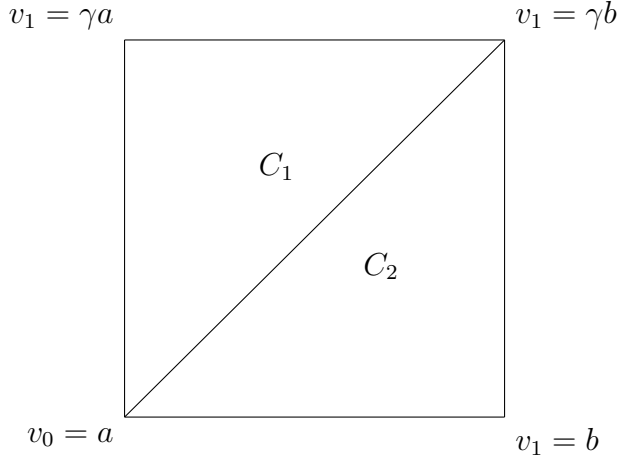
For $(5)$, consider the simplices $C_1$ and $C_2$ as in Figure \ref{simplicial2}. The boundaries are given by
\begin{align}
    \partial C_1 & = [b, \gamma a]-[\gamma b,\gamma a]+[\gamma b,b] \nonumber \\
    \partial C_2 & = [b,\gamma a ]-[a,\gamma a]+[a,b]
\end{align}
so that
\begin{align}
    \partial (C_1-C_2) & = [\gamma a,\gamma b]-[b,\gamma b] +[a, \gamma a ]-[a,b]= [\gamma a,\gamma b]-[a,b]
\end{align}
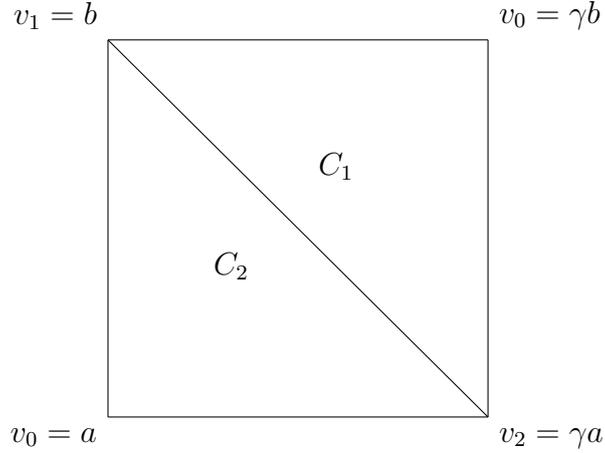
\begin{figure} [h]
\centering
\begin{tikzpicture} 
\draw (0,0) -- (5,0) ;
\draw (0,0) -- (0,5) ;
\draw (5,0) -- (0,5) ;
\draw (5,0) -- (5,5) ;
\draw (0,5) -- (5,5) ;
\draw (0,0) node[below left]{$v_0=a$} ;
\draw (3,3) node[above]{$C_1$};
\draw (2,2) node[left]{$C_2$};
\draw (0,5) node[above left]{$v_1=b$} ;
\draw (5,0) node[below right]{$v_2=\gamma a$};
\draw (5,5) node[above right]{$v_0=\gamma b$};
\end{tikzpicture}
\captionsetup{width=.75\textwidth,font={small,it},justification=centering}

\caption{The equivalence $[a,b] \sim [\gamma a , \gamma b ]$.}
\label{simplicial2}
\end{figure}
Let us now prove the statement of the proposition. The fact that it is independent of the basepoint follows from $(4)$, and the fact that it is a homorphism follows from $(2)$ and $(4)$ since
\begin{align}
    [u_0,\gamma_1 u_0]+[u_0,\gamma_2 u_0] \sim [\gamma_2 u_0,\gamma_1 \gamma_2 u_0]+[u_0,\gamma_2 u_0] \sim [u_0,\gamma_1 \gamma_2 u_0].
\end{align}
For the surjectivity, suppose we have a class represented by a cycle
\begin{align}
    \sigma = \sum_{i=1}^m n_i [a_i,b_i] \in C_1(\HH_3)_\Gamma.
\end{align}
After relabeling the paths we can suppose that $n_i=\pm 1$, and using $(3)$ we can even suppose that $n_i=1$. Since the boundary is 
\begin{align}
    \partial \sigma = \sum_i (b_i-a_i) =0,
\end{align}
we necessarily have that every $b_j$ is equal to $\gamma_{ij} a_i$ for a unique $a_i$ and some $\gamma_{ij} \in \Gamma$. We can see it as a permutation on the set $\{1, \dots, m\}$, where we send $i$ to $j$ if $a_i$ is $\Gamma$-equivalent to $b_j$. 

First suppose that the corresponding permutation is the identity {\em i.e}  $b_i=\gamma_i a_i$ for every $i$ . Then using $(4)$ and the fact that the map is a morphism we get
\begin{align}
\sigma = \sum_{i=1}^m [a_i,\gamma_i a_i] \sim \sum_{i=1}^m [u_0,\gamma_i u_0]\sim [u_0,\gamma_1 \cdots \gamma_m u_0].
\end{align}

Now suppose that the permutation contains some cycle of order $n$, which means that $\sigma$ contains a cycle
\begin{align}
    \sigma'= [a_1,\gamma_n a_n]+[a_2,\gamma_1 a_1]+ [a_3,\gamma_2 a_2] + \cdots + [a_n,\gamma_{n-1}a_{n-1}].
\end{align}
Using $(5)$ and $(2)$ we can sum the first two terms
\begin{align}
    [a_1,\gamma_n a_n]+[a_2,\gamma_1 a_1] \sim [\gamma_1 a_1, \gamma_1 \gamma_n a_n]+[a_2,\gamma_1 a_1] \sim [a_2,\gamma_1 \gamma_n a_n].
\end{align}
By induction we then get $\sigma' \sim [a_n, \gamma_{n-1} \gamma_{n-2} \cdots \gamma_1 \gamma_n a_n]$, so that we are reduced to the first case.
\end{proof}

\paragraph{Integral structure on the cohomology.} Let $\Rcal \subset \C$ be a torsion free $\Ocal$-submodule. We will later take $\Rcal=\Ocal_{F}$. We have a pairing
\begin{align}
    \bigl <  \quad, \quad \bigr > \colon H_1(Y_\Gamma; \C) \otimes H^1(Y_\Gamma; \C) & \longrightarrow \C \nonumber \\
    ([\sigma] , [\omega]  ) & \longmapsto \int_\sigma \omega
\end{align}
where $\omega \in \Omega^1(\HH_3;\C)^\Gamma$ and $\sigma \in C_1(\HH_3)_\Gamma$; see \cite[Satz.~3]{feldhusen}. Note that $\Rcal \otimes_\Rcal \C=\C$, so that we have a map
\begin{align}
    H_1(Y_\Gamma; \Rcal) \longrightarrow H_1(Y_\Gamma;\Rcal) \otimes_\Rcal \C = H_1(Y_\Gamma;\C).
\end{align}
Let $\cohomt_1(Y_\Gamma;\Rcal)$ be the image of this map. The kernel is the torsion part of $H_1(Y_\Gamma;\Rcal)$, so that we can identify $\cohomt_1(Y_\Gamma;\Rcal)$ with the free part of $H_1(Y_\Gamma;\Rcal)$. We use the pairing to define the cohomology groups
\begin{align}
\cohomt^1(Y_\Gamma; \Rcal) \coloneqq \left \{  \Bigl . [\omega] \in H^1(Y_\Gamma; \C) \Bigr  \vert \; \bigl < [\omega] , [\sigma] \bigr > \in \Rcal \quad \textrm{for all} \; [\sigma] \in \cohomt_1(Y_\Gamma; \Rcal) \right \}.
\end{align}
Since $\Rcal$ is torsion-free, we can identify $H_1(Y_\Gamma; \Rcal)$ with $H_1(Y_\Gamma;\Z) \otimes_{\Z} \Rcal$. Hence, by Proposition \ref{surjectivegamma} a class $[\omega]$ is in $\cohomt^1(Y_\Gamma; \Rcal)$ if and only if
\begin{align}
    \int_{u_0}^{\gamma u_0} \omega \in \Rcal
\end{align}
for all $\gamma \in \Gamma$.
\begin{rmk}The $\Rcal$-module $\cohomt^1(Y_\Gamma;\Rcal)$ is the torsion free part of the sheaf cohomology $\cohom^1(Y_\Gamma; \Rcal)$, that we identify with the image 
\begin{align}
    \im \left ( \cohom^1(Y_\Gamma; \Rcal) \longrightarrow \cohom^1(Y_\Gamma; \C) \right ).
\end{align}
\end{rmk}
\subsection{Cohomology of the boundary}

Recall that for any matrix $M$ in $\SL_2(K)$ sending $r$ to $\8$ we had a map \eqref{amap2}
\begin{align*} \label{amap3}
    \phi_{M,r} \colon \Gamma_r \backslash \Hcal_r & \longrightarrow \C/\afrak_M^{-2} \nonumber, \\
    \Gamma_r(z:1) & \longmapsto Mz
\end{align*}
where $\afrak_M=(m)+(n)$. Note that if $c=[r]$, then $[\afrak_M]=[\afrak_c]$ in the bijection \eqref{clbij} between $C_\Gamma$ and $\Cl(K)$.

\begin{lem} \label{boundforms} Let $\chi$ be an unramified Hecke character of infinity type $(-2,0)$. The forms \begin{align*}
    \omega_{\chi,r} & \coloneqq \chi(\afrak_M)^{-1}\phi_{M,r}^\ast dz \nonumber \\
    \bar{\omega}_{\chi,r} & \coloneqq \chi(\afrak_M)^{-1}\phi_{M,r}^\ast d\bar{z}
\end{align*}
lie in $\cohomt^1(\Gamma_r \backslash \Hcal_r,\Ocal_{F})$ and do not depend on the choice of $M$. Furthermore, we have $\gamma^\ast \omega_{\chi,r}=\omega_{\chi,\gamma^{-1} r}$ and $\gamma^\ast \bar{\omega}_{\chi,r}=\bar{\omega}_{\chi,\gamma^{-1} r}$.
\end{lem}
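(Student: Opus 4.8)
The plan is to verify each of the three assertions — well-definedness (independence of $M$), integrality, and the equivariance $\gamma^\ast\omega_{\chi,r}=\omega_{\chi,\gamma^{-1}r}$ — by reducing everything to explicit matrix computations using the factorization already recorded in the excerpt. First I would treat \emph{independence of $M$}: if $M$ and $N$ both send $r$ to $\infty$, then $M=\left(\begin{smallmatrix}a&b\\0&a^{-1}\end{smallmatrix}\right)N$ with $a\in K^\times$, so $Mz = a^2 Nz + ab$ as maps on $\Hcal_r$, and hence $\phi_{M,r}^\ast dz = a^2\,\phi_{N,r}^\ast dz$. It remains to see that $\chi(\afrak_M)^{-1}a^2 = \chi(\afrak_N)^{-1}$. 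Since $\afrak_M=(m)+(n)$ where $r=(m:n)$, writing out the relation between the bottom rows of $M$ and $N$ shows $\afrak_M = (a)\afrak_N$ (the new generators of $r$ are scaled by $a$), so $\chi(\afrak_M)=\chi((a))\chi(\afrak_N)=a^{-2}\chi(\afrak_N)$ because $\chi$ has infinity type $(-2,0)$. This gives exactly the needed cancellation; the same argument works verbatim for $\bar\omega_{\chi,r}$, using that $d\bar z$ transforms by $\bar a^2$ while $\chi(\afrak_M)^{-1}$ still contributes $a^2$ — wait, here one must be careful: $d\bar z$ picks up $\bar a^2$ but the $\chi$-factor gives $a^2$, so the two forms are \emph{not} individually $M$-independent unless $a^2=\bar a^2$. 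I would handle this by noting that in fact only the product structure matters for what follows, or — more likely matching the paper's intent — by observing that $\omega_{\chi,r}$ and $\bar\omega_{\chi,r}$ should be paired so that the relevant statement is about the $2$-dimensional space they span; I would check the exact normalization against how these forms are used in Section~3 and, if needed, absorb a $\bar a/a$-type unit. (This subtlety is the first place I expect to have to think.)

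Next, for \emph{integrality}, i.e.\ that $\omega_{\chi,r}$ and $\bar\omega_{\chi,r}$ lie in $\cohomt^1(\Gamma_r\backslash\Hcal_r;\Ocal_F)$: by the remark following Proposition~\ref{surjectivegamma}'s companion discussion, a closed $1$-form lies in $\cohomt^1$ iff its integral over every loop — equivalently, over every generator of $\Gamma_r\simeq\afrak_M^{-2}$ — lies in $\Ocal_F$. Via $\phi_{M,r}$, the space $\Gamma_r\backslash\Hcal_r$ is identified with $\C/\afrak_M^{-2}$, on which $dz$ has periods exactly $\afrak_M^{-2}$ and $d\bar z$ has periods $\overline{\afrak_M^{-2}}$. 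So the periods of $\omega_{\chi,r}$ are $\chi(\afrak_M)^{-1}\afrak_M^{-2}$. By Proposition~\ref{proplambda} — or more directly by equation~\eqref{idealequality2}, $\chi(\afrak)\Ocal_F=\afrak^{-2}\Ocal_F$ — we get $\chi(\afrak_M)^{-1}\Ocal_F=\afrak_M^{2}\Ocal_F$, hence $\chi(\afrak_M)^{-1}\afrak_M^{-2}\subseteq\Ocal_F$. That gives integrality of $\omega_{\chi,r}$; for $\bar\omega_{\chi,r}$ one uses that complex conjugation fixes the ideal $\afrak_M^{-2}$'s norm and $\Ocal_F$ is stable under it (since $F/\Q$ is Galois), so the periods $\chi(\afrak_M)^{-1}\overline{\afrak_M^{-2}}$ also lie in $\Ocal_F$.

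Finally, for the \emph{equivariance} $\gamma^\ast\omega_{\chi,r}=\omega_{\chi,\gamma^{-1}r}$: if $M$ sends $r\to\infty$, then $M\gamma$ sends $\gamma^{-1}r\to\infty$, so I may use $M'\coloneqq M\gamma$ as the defining matrix at the cusp $\gamma^{-1}r$. Then $\phi_{M',\gamma^{-1}r} = \phi_{M,r}\circ\gamma$ as maps $\Gamma_{\gamma^{-1}r}\backslash\Hcal_{\gamma^{-1}r}\to\C/\afrak_{M'}^{-2}$ (here using that $\SL_2(\C)$ acts on $\PP^1(\C)$ and on $\HH_3^\ast$ compatibly with the $\iota$-maps, cf.\ \eqref{equivariance}), so $\phi_{M',\gamma^{-1}r}^\ast dz = \gamma^\ast\phi_{M,r}^\ast dz$. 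Since $\gamma\in\SL_2(\Ocal)$ and the bottom row of $M\gamma$ generates the same $\Ocal$-ideal as that of $M$ (because $\gamma$ is a unit of the lattice), $\afrak_{M'}=\afrak_M$ and thus $\chi(\afrak_{M'})=\chi(\afrak_M)$; combining, $\omega_{\chi,\gamma^{-1}r}=\chi(\afrak_{M'})^{-1}\phi_{M',\gamma^{-1}r}^\ast dz = \gamma^\ast\big(\chi(\afrak_M)^{-1}\phi_{M,r}^\ast dz\big)=\gamma^\ast\omega_{\chi,r}$, and identically for $\bar\omega_{\chi,r}$. The main obstacle, as flagged above, is getting the normalization of the conjugate form $\bar\omega_{\chi,r}$ exactly right so that it is genuinely independent of $M$ and integral; once the correct power of $\chi$ (or the correct auxiliary unit factor) is pinned down, all three claims follow from the chain of identities $M\gamma \leftrightarrow$ bottom-row ideal $\leftrightarrow$ $\chi$-value $\leftrightarrow$ period lattice that I have sketched.
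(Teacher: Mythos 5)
Your proposal follows the paper's own proof essentially step for step: integrality via the periods of $dz$ on $\C/\afrak_M^{-2}$ combined with $\chi(\afrak_M)\Ocal_{F}=\afrak_M^{-2}\Ocal_{F}$, independence of $M$ via the factorization $M=PN$ with $P$ upper triangular and the cancellation of the factor $a^{2}$ from $dz$ against the factor $a^{-2}$ from $\chi(\afrak_N)^{-1}$, and equivariance via the auxiliary matrix $M\gamma$ together with $\afrak_{M\gamma}=\afrak_M$. The subtlety you flag for $\bar{\omega}_{\chi,r}$ is genuine and is not addressed by the paper, which writes out only the case of $\omega_{\chi,r}$ and declares the conjugate case ``similar'': with the normalization as literally printed, the factor $\bar{a}^{2}$ coming from $d\bar{z}$ does not cancel against the $a^{-2}$ coming from $\chi(\afrak_N)^{-1}$, and likewise the periods $\chi(\afrak_M)^{-1}\overline{\afrak_M^{-2}}=\afrak_M^{2}\bar{\afrak}_M^{-2}\Ocal_F$ need not lie in $\Ocal_F$; the fix is exactly the one you anticipate, namely that the conjugate form must be normalized by $\chi(\bar{\afrak}_M)^{-1}$ (this is also what is forced by the relation $\iota^\ast \omega_{\chi,r}=\bar{\omega}_{\chi,\bar{r}}$ used later), after which both the $M$-independence and the integrality arguments go through verbatim with $\bar{\afrak}_M$ in place of $\afrak_M$.
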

\begin{proof} We prove the statements for $\omega_{\chi,r}$, they are similar for $\bar{\omega}_{\chi,r}$.
    Since $\afrak_M^{-2}$ is the period lattice of the elliptic curve $\C/\afrak_M^{-2}$, we have
    \[\int_\gamma dz \in \afrak_M^{-2}\]
    for any $\gamma \in H_1(\C/\afrak_M^{-2},\Z)$. After tensoring with $\Ocal_{F}$ and recalling from the proof of Proposition \ref{proplambda} that $\chi(\afrak_M)\Ocal_{F}=\afrak_M^{-2}\Ocal_{F}$, we get that
    \begin{align}\int_\gamma \chi(\afrak_M)^{-1}dz \in \chi(\afrak_M)^{-1}\afrak_M^{-2}\Ocal_{F} \subset \Ocal_{F}.\end{align}
    It follows that
    \begin{align}\chi(\afrak)^{-1}dz \in \cohomt^1(\C/\afrak_M^{-2},\Ocal_{F}).\end{align}
    Hence the pullback $\omega_{\chi,r}$ is also integral.
    Now suppose that $N$ is another matrix in $\SL_2(K)$ sending $r$ to $\8$. Then $M=PN$ where
    $P=\begin{pmatrix}
        a & b \\ 0 & a^{-1}
    \end{pmatrix}$.
    We have $\afrak_N=a^{-1}\afrak_M$ and the following diagram commutes
    \begin{equation}
\begin{tikzcd}
  \Gamma_r \backslash \Hcal_r \arrow[rd,"\Phi_{N,r}",swap] \arrow[r,"\Phi_{M,r}"] & \C/\afrak_M^{-2} \arrow[d] \\
 & \C/a^2\afrak_M^{-2},
\end{tikzcd}
\end{equation}
where the vertical map sends $z$ to $Nz=a^2z+ab$. Hence $N^\ast dz=a^2dz$ and
\begin{align}
    \chi(\afrak_N^{-2})\phi^\ast_{N,r}dz=a^{-2}\chi(\afrak_M^{-2})\phi^\ast_{M,r}N^\ast dz=\chi(\afrak_M^{-2})\phi^\ast_{M,r}dz.
\end{align}
Finally, let $\gamma$ be in $\Gamma$. We have seen that the form does not depend on the choice of $M$, so we can take $M\gamma$ to be the matrix in $\SL_2(K)$ sending $\gamma^{-1} r $ to $\8$. We have $\afrak_{M\gamma^{-1}}=\afrak_M$ and the following diagram commutes
    \begin{equation}
\begin{tikzcd}
  \Gamma_r \backslash \Hcal_r \arrow[d] \arrow[r,"\Phi_{M,r}"] & \C/\afrak_M^{-2} \arrow[d] \\
  \Gamma_{\gamma^{-1} r} \backslash \Hcal_{\gamma^{-1} r}  \arrow[r,"\Phi_{M\gamma,r}"] & \C/\afrak_M^{-2}.
\end{tikzcd}
\end{equation}
Thus
\begin{align}
    \omega_{\chi,\gamma^{-1}r}=\chi(\afrak_{M\gamma})^{-1}\phi_{M\gamma,r}^\ast dz =\chi(\afrak_{M})^{-1} \gamma^\ast \phi_{M,r}^\ast dz=\omega_{\chi,r}.
\end{align}
\end{proof}

The complex conjugation on $\C$ induces an involution on $\HH_3$
\begin{align}
    \iota \colon \HH_3 & \longrightarrow \HH_3 \nonumber \\
    z+jv & \longmapsto \bar{z}+jv.
\end{align}
It extends canonically to $\partial \HH_3$ by sending $z \in \Hcal_r$ to $\bar{z} \in \Hcal_{\bar{r}}$. Consider the involution $I(\gamma)=\bar{\gamma}$ on $\Gamma$. One can check that
\begin{align} \label{involutions}
    \iota \circ \gamma (u)=I(\gamma) \circ \iota(u).
\end{align} Hence, the involution $\iota$ descends to an involution on $X_\Gamma$ and restricts to an involution on $\partial X_\Gamma$. The pullback of differential forms by these involutions induce compatible involutions on $H^1(X_\Gamma;\C)$ and $H^1(\partial X_\Gamma;\C)$. At the level of the boundary forms we have
\begin{align}\label{involution2}
    \iota^\ast \omega_{\chi,r}=\bar{\omega}_{\chi,\bar{r}}.
\end{align}
Let $H^1(\partial X_\Gamma;\C)^{-}$ be the $(-1)$-eigenspace of this involution.

\begin{prop}  \label{imres} We have
\[\dim \im(\res) =\frac{1}{2}  \dim_\C H^1(\partial X_\Gamma;\C)= h.\]
More precisely, the map
\[H^1 (X_\Gamma,\C ) \longrightarrow H^1(\partial X_\Gamma,\C)^-\]
is surjective.
\end{prop}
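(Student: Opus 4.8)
The plan is to prove the statement in three steps: (i) compute $\dim_\C H^1(\partial X_\Gamma;\C)^-=h$; (ii) show $\dim_\C\im(\res)\le h$; (iii) produce enough classes in $H^1(X_\Gamma;\C)$ whose restrictions span $H^1(\partial X_\Gamma;\C)^-$, so that by (i) and (ii) we must have $\im(\res)=H^1(\partial X_\Gamma;\C)^-$, which gives both the surjectivity onto the $(-1)$-eigenspace and the dimension count.

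For step (i), each boundary component $\Gamma_r\backslash\Hcal_r$ is, by \eqref{amap2}, a flat $2$-torus $\C/\afrak_M^{-2}$, so $H^1(\partial X_\Gamma;\C)=\bigoplus_{c=[r]\in C_\Gamma}H^1(\C/\afrak_M^{-2};\C)$ is $2h$-dimensional, the summand attached to $[r]$ having the classes $\omega_{\chi,r},\bar\omega_{\chi,r}$ of Lemma \ref{boundforms} as a basis. By \eqref{involution2} the involution $\iota^\ast$ sends $\omega_{\chi,r}\mapsto\bar\omega_{\chi,\bar r}$ and $\bar\omega_{\chi,r}\mapsto\omega_{\chi,\bar r}$, so it permutes the cusps via $[r]\mapsto[\bar r]$ while interchanging the two types. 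Decomposing the cusps into $\iota$-orbits — a fixed cusp $[r]=[\bar r]$ contributing a $2$-dimensional $\iota^\ast$-module with $(\pm1)$-eigenspaces of dimension $1$ each, a two-element orbit $\{[r],[\bar r]\}$ contributing a $4$-dimensional module with $(\pm1)$-eigenspaces of dimension $2$ each — one gets $\dim_\C H^1(\partial X_\Gamma;\C)^+=\dim_\C H^1(\partial X_\Gamma;\C)^-=h$. (Incidentally $\iota^\ast$ is anti-symplectic for the cup-product pairing on $H^1(\partial X_\Gamma;\C)$ since $\iota$ reverses orientation, so both eigenspaces are Lagrangian; this will not be logically needed but clarifies the picture.)

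For step (ii), if $\tilde\alpha,\tilde\beta\in\Omega^1(\HH_3^\ast;\C)^\Gamma$ are closed then $\int_{\partial X_\Gamma}\res(\tilde\alpha)\wedge\res(\tilde\beta)=\int_{X_\Gamma}d(\tilde\alpha\wedge\tilde\beta)=0$ by Stokes (the orbifold structure of $X_\Gamma$ is harmless with $\C$-coefficients, as $X_\Gamma$ is a rational homology manifold with boundary). Hence $\im(\res)$ is isotropic for the symplectic cup-product pairing on $H^1(\partial X_\Gamma;\C)$, so $\dim_\C\im(\res)\le h$. (The full ``half lives, half dies'' equality $\dim\im(\res)=h$ would follow from the long exact sequence \eqref{les1} together with Poincaré–Lefschetz duality $H^2(X_\Gamma,\partial X_\Gamma;\C)\cong H_1(X_\Gamma;\C)$, which identifies the connecting map $\alpha^\ast$ with the pairing-adjoint of $\res$; but we only use the inequality.) Then for step (iii) it is enough to produce, for each cusp $[r]$, a class in $H^1(X_\Gamma;\C)$ restricting to a nonzero element of $H^1(\partial X_\Gamma;\C)^-$, with the $h$ resulting restrictions linearly independent. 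These are supplied by Harder's Eisenstein map \eqref{eismapintro}: applied to the boundary class equal to $\omega_{\chi,r}$ on the $[r]$-component and $0$ elsewhere, it gives $\Eis(\omega_{\chi,r})\in H^1(X_\Gamma;\C)$, and the computation of the constant term of the corresponding Eisenstein series — equivalently, the explicit description in Section \ref{eisdefs} of the restriction to the cusps of the Eisenstein forms $\Ehat_{\chi,\afrak}$ of Ito and of Bergeron–Charollois–Garcia — shows that $\res(\Eis(\omega_{\chi,r}))$ is a nonzero scalar multiple of $\omega_{\chi,r}-\bar\omega_{\chi,\bar r}\in H^1(\partial X_\Gamma;\C)^-$. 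As $[r]$ runs over the $h$ cusps, these restrictions span $H^1(\partial X_\Gamma;\C)^-$ by the eigenspace description of step (i); hence $\res$ maps onto $H^1(\partial X_\Gamma;\C)^-$, and together with $\dim_\C\im(\res)\le h=\dim_\C H^1(\partial X_\Gamma;\C)^-$ we conclude $\im(\res)=H^1(\partial X_\Gamma;\C)^-$ and $\dim_\C\im(\res)=h=\tfrac12\dim_\C H^1(\partial X_\Gamma;\C)$.

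The main obstacle is step (iii): the soft topological input only pins down the dimension of $\im(\res)$ and the fact that it is Lagrangian, not which Lagrangian it is, and identifying it with the anti-invariant part $H^1(\partial X_\Gamma;\C)^-$ genuinely requires the automorphic/geometric input — the value of the intertwining (scattering) operator occurring in the constant term of the Eisenstein series at the cohomological point, or, concretely, a direct computation of the boundary restriction of the explicit Eisenstein classes. That these restrictions are ``imaginary-part'' forms proportional to $\omega_{\chi,r}-\bar\omega_{\chi,\bar r}$ is already foreshadowed by the appearance of the operator $I(z)=z-\bar z$ in the definition of the Sczech cocycle $\Phi_\afrak$.
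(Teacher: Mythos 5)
Your proof is correct, but it takes a genuinely different route from the paper's. The paper obtains the equality $\dim\im(\res)=h$ purely topologically: from the long exact sequence \eqref{les1} one has $\im(\res)=\ker(\alpha^\ast)$, and Poincar\'e--Lefschetz duality together with the identity $\int_{X_\Gamma}\alpha(\theta)\wedge\omega=\int_{\partial X_\Gamma}\theta\wedge\res(\omega)$ shows $\alpha^\ast=\res^\vee$, whence $\im(\res)=\ker(\res^\vee)=\im(\res)^\perp$ is Lagrangian and thus exactly half-dimensional; for the finer statement that this Lagrangian is the $(-1)$-eigenspace, the paper simply invokes a theorem of Serre via \cite[Proposition~24, Corollary~26]{berger_2009}. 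You instead establish only the soft inequality $\dim\im(\res)\le h$ (isotropy via Stokes, which is one inclusion of the paper's duality argument) and recover both the reverse inequality and the identification $\im(\res)=H^1(\partial X_\Gamma;\C)^-$ by exhibiting explicit classes: the Eisenstein forms $E_{\chi,\afrak}$ whose boundary restrictions are $\omega_{\chi,r}-\bar{\omega}_{\chi,\bar r}$, which is exactly the content of Proposition \ref{resteis}. This makes the surjectivity self-contained (no appeal to Serre's theorem), at the cost of importing the analytic input --- regularization, closedness at $s=0$, and the constant-term computation (Propositions \ref{continuation} and \ref{resteis}) --- which in the paper's logical order comes \emph{after} Proposition \ref{imres}. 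There is no circularity, since neither of those propositions uses Proposition \ref{imres}, but your argument can only be run once the Fourier-expansion section is in place, whereas the paper wants the dimension count available earlier. Your eigenspace count in step~(i) and the isotropy observation in step~(ii) agree with the paper where they overlap.
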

\begin{proof} The result follows from a theorem of Serre. We refer to \cite[Proposition.~24, Corollary.~26]{berger_2009} for a proof. However, let us prove the statement about the dimension. Let
\begin{align}
    \alpha^\ast \colon H^1(\partial X_\Gamma; \C) & \longrightarrow H^2(X_\Gamma, \partial X_\Gamma;\C)
\end{align}
be the map from the long exact sequence, so that $\ker(\alpha^\ast)=\im(\res)$. By Poincaré duality, we have 
\begin{align}
H^1(\partial X_\Gamma;\C)^\vee \simeq H^1(\partial X_\Gamma; \C)    
\end{align} and 
\begin{align}
    H^2(X_\Gamma, \partial X_\Gamma;\C) \simeq H^2_c(Y_\Gamma;\C) \simeq H^1(Y_\Gamma;\C)^\vee
\end{align} so that we can see $\alpha^\ast$ as a map
\begin{align}
    \alpha^\ast \colon H^1(\partial X_\Gamma;\C)^\vee & \longrightarrow H^1(Y_\Gamma;\C)^\vee
\end{align}
Since for $\theta \in \Omega^1(\partial X_\Gamma)$ and $\omega \in H^1(X_\Gamma,\partial X_\Gamma)$ we have
\begin{align}
    \int_{X_\Gamma} \alpha(\theta) \wedge \omega = \int_{\partial X_\Gamma} \theta \wedge \res(\omega),
\end{align}
it follows that $\alpha^\ast=\res^\vee$ is adjoint to $\res$. Moreover, the space $\im(\res)$ is an isotropic subspace since
\begin{align}
    \im(\res)=\ker(\alpha^\ast)=\ker(\res^\vee)=\im(\res)^\perp.
\end{align}
 Thus, it must be of half the dimension of the total space.
\end{proof}

\subsection{Eisenstein map}
In \eqref{resdef} we defined a restriction map
\begin{align}
    \res \colon H^1(Y_\Gamma; \C) \longrightarrow H^1(\partial X_\Gamma; \C).
\end{align}
The kernel is the interior cohomology $ H^1_!(Y_\Gamma; \C)$ and can be identified with the image of the compactly supported cohomology inside $H^1(Y_\Gamma; \C)$.  We will define an Eisenstein map 
\begin{align}
    \Eis \colon H^1(\partial X_\Gamma;\C) \longrightarrow H^1(Y_\Gamma;\C)
\end{align}
whose image will be the Eisenstein cohomology $H^1_{\Eis}(Y_\Gamma;\C)$.
\begin{rmk}
 As it will follow from Proposition \ref{resteis}, the map $\Eis$ is not a section of $\res$, {\em i.e.} we do not have $\res \circ \Eis = \id$.
\end{rmk} We begin by defining a map  
\begin{align}
    \Eis \colon \cohom^1(\Gamma_\8\backslash \Hcal_\8;\C) \longrightarrow \cohom^1( Y_\Gamma; \C)
\end{align}
at the cusp $\8$. We have a $\Gamma_\8$-equivariant map
\begin{align} \label{projection}
    p_\8 \colon  \HH_3 & \longrightarrow \Hcal_\8 \nonumber, \\
   z+jv & \longmapsto (z:1),
\end{align}
that we can use to pull back a form $\omega_\8$ in $\Omega^1(\Hcal_\8)^{\Gamma_\8}$ to a form 
\begin{align}
    p_\8^\ast \omega_\8 \in \Omega^1(\HH_3)^{\Gamma_\8}.
\end{align}
To obtain a form on $Y_\Gamma$ we define
\begin{align} \label{eisstep1}
    \Eis(\omega_\8) \coloneqq \sum_{\gamma \in \Gamma_\8 \backslash \Gamma} \gamma^\ast p^\ast \omega \in \Omega^1(\HH_3)^{\Gamma}.
\end{align}
Similarly, at the other cusps $r$ we can define the $\Gamma_r$-equivariant map
\begin{align} \label{projection2}
    p_{r} \colon  \HH_r & \longrightarrow \Hcal_r
\end{align} to be the composition $p_r=M^{-1} \circ p_\8 \circ M$:
\begin{equation}
\begin{tikzcd}
 \HH_3 \arrow[d,"M"] \arrow[dotted,r,"p_{r}"] & \Hcal_r  \\
  \HH_3  \arrow[r,"p_\8"] & \Hcal_\8 \arrow[u,"M^{-1}"],
\end{tikzcd}
\end{equation}
where $M$ is a matrix in $\SL_2(K)$ sending $r$ to $\8$ as earlier. Note that $p_r$ does not depend on the choice of $M$ since
\begin{align}
    \begin{pmatrix}
        a & b \\ 0 & a^{-1}
    \end{pmatrix}^{-1} \circ p_\8 \circ \begin{pmatrix}
        a & b \\ 0 & a^{-1}
    \end{pmatrix} = p_\8.
\end{align}
For a form $\omega_r$ in $\Omega^1(\Hcal_r)^{\Gamma_r}$ we define
\begin{align} \label{eisstep2}
    \Eis(\omega_r)= \sum_{\gamma \in \Gamma_r \backslash \Gamma} \gamma^\ast p_r^\ast \omega_r \in \Omega^1(\HH_3)^\Gamma.
\end{align}
 However, the sums \eqref{eisstep1} and \eqref{eisstep2} are not convergent, and need to be regularized.

\subsection{Regularization of the Eisenstein series}
Since $\omega_{\chi,r}$ and $\bar{\omega}_{\chi,r}=\iota^\ast \omega_{\chi,\bar{r}}$ span the space of forms on $\Gamma_r\backslash \Hcal_r$, it is enough to regularize
\begin{align}
    E_{\chi,c} \coloneqq \Eis(\omega_{\chi,r}) \in \Omega^{1}(\HH_3)^{\Gamma},
\end{align}
where $c=[r]$ is a cusp. Note that the left hand side only depends on $c$ since for $\gamma$ in $\Gamma$ we have
\begin{align}
    \Eis(\omega_{\chi,\gamma^{-1}r})= \Eis(\gamma^\ast \omega_{\chi,r})=\Eis(\omega_{\chi,r}).
\end{align}
In particular, we will frequently denote this form by $E_{\chi,\afrak}$ where $\afrak$ is in the ideal class of $\afrak_c$, corresponding to the cusp $c$.

Let $r=(m:n)$ be represented by coprime integers and let $M$ in $\SL_2(K)$ be the matrix as earlier, sending $r$ to $\8$. We have:
\begin{align}
    E_{\chi,c} & = \chi(\afrak_M)^{-1}\sum_{\gamma \in \Gamma_r \backslash \Gamma} \gamma^\ast M^\ast dz \nonumber \\
    & = \chi(\afrak_M)^{-1}\sum_{\gamma \in (M\Gamma_rM^{-1}) \backslash M\Gamma} \gamma^\ast dz \nonumber \\
    & = \chi(\afrak_M)^{-1}\sum_{\gamma \in \Gamma(\afrak_M^{-2}) \backslash M\Gamma} \alpha^\ast dz.
\end{align}
Since $[\afrak_M]=[\afrak_c]$, we have
\begin{align}
    E_{\chi,\afrak} & = \chi(\afrak)^{-1}\sum_{\gamma \in \Gamma(\afrak^{-2}) \backslash M\Gamma} \alpha^\ast dz.
\end{align} 
\begin{lem} The map
    \begin{align}
    \Gamma(\afrak_M^{-2})_\8 \backslash M \Gamma & \longrightarrow \left \{ \left . (c,d) \in \afrak_M \times \afrak_M \right \vert (c)+(d)=\afrak_M \right \} \nonumber \\
    \alpha = \begin{pmatrix} a & b \\ c& d \end{pmatrix} & \longmapsto (c,d)=(0,1)\alpha
\end{align}
is a bijection.
\end{lem}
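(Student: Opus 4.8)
The plan is to exhibit an explicit inverse and check it is well-defined, bijective and compatible on both sides. First I would recall the standard fact that $M\Gamma = M\SL_2(\Ocal)$ consists precisely of the matrices in $\SL_2(K)$ whose rows lie in the appropriate fractional ideals: since $M \in \left(\begin{smallmatrix}\afrak_M^{-1} & \afrak_M^{-1} \\ \afrak_M & \afrak_M\end{smallmatrix}\right)$ and $M\Gamma M^{-1} = \Gamma(\afrak_M^{-2})$, conjugating back gives that $M\Gamma = \Gamma(\afrak_M^{-2}) M$, so every $\alpha \in M\Gamma$ has bottom row $(c,d)$ with $c,d \in \afrak_M$; moreover $\det \alpha = 1$ forces $(c) + (d) = \afrak_M$ (the ideal generated by the entries of a unimodular row over the ring $\afrak_M^{-2}$-acting lattice is exactly $\afrak_M$, which one sees by writing $ad - bc = 1$ with $a,b \in \afrak_M^{-1}$). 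This shows the map is well-defined, and it visibly only depends on the left coset $\Gamma(\afrak_M^{-2})_\8 \alpha$, since multiplying $\alpha$ on the left by $\left(\begin{smallmatrix}1 & t \\ 0 & 1\end{smallmatrix}\right)$ with $t \in \afrak_M^{-2}$ leaves the bottom row unchanged.

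Next I would construct the inverse. Given $(c,d)$ with $c,d \in \afrak_M$ and $(c)+(d) = \afrak_M$, I need $a,b \in \afrak_M^{-1}$ with $ad - bc = 1$; such a pair exists because $(c)+(d) = \afrak_M$ means $1 \in \afrak_M^{-1}(c) + \afrak_M^{-1}(d)$, i.e. there are $a, -b \in \afrak_M^{-1}$ with $ad - bc = 1$. This produces a matrix $\alpha \in M\Gamma$ with bottom row $(c,d)$. The ambiguity in the choice of $(a,b)$ is exactly the set of solutions to $(a-a')d - (b-b')c = 0$ with $a-a', b-b' \in \afrak_M^{-1}$; since $(c)+(d) = \afrak_M$ one checks $(a-a', b-b') = t(c,d)$ for some $t \in \afrak_M^{-2}$, i.e. $\alpha' = \left(\begin{smallmatrix}1 & t \\ 0 & 1\end{smallmatrix}\right)\alpha$. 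Hence the top row is well-defined modulo left multiplication by $\Gamma(\afrak_M^{-2})_\8$, so $(c,d) \mapsto \Gamma(\afrak_M^{-2})_\8 \alpha$ is a well-defined map, and it is manifestly a two-sided inverse to the stated one.

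I expect the main technical point — really the only place any care is needed — to be the elementary commutative-algebra verification that, over a Dedekind domain with fractional ideals in play, the row $(c,d)$ with entries in $\afrak_M$ extends to a unimodular matrix with the complementary row in $\afrak_M^{-1}$ precisely when $(c)+(d) = \afrak_M$, together with the identification of the resulting ambiguity with $\afrak_M^{-2}$. This is a standard fact (it is the statement that every rank-one projective is locally free / invertible, plus the elementary-divisor normal form for $2\times 2$ matrices over a Dedekind domain), but since the paper is working with the fixed lattices $\afrak_M, \afrak_M^{-1}$ rather than abstractly, I would spell out the computation $ad - bc = 1 \Leftrightarrow 1 \in \afrak_M^{-1}(c) + \afrak_M^{-1}(d) = \afrak_M^{-1}\bigl((c)+(d)\bigr)$ and note this is equivalent to $(c)+(d) = \afrak_M$. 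Everything else — well-definedness on cosets, $\SL_2$-compatibility, and that the two maps compose to the identity both ways — is then immediate from this.
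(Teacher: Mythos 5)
Your proposal is correct, and it rests on the same key computation as the paper: identifying $M\Gamma$ with $\left(\begin{smallmatrix}\afrak_M^{-1} & \afrak_M^{-1}\\ \afrak_M & \afrak_M\end{smallmatrix}\right)\cap\SL_2(K)$ (via $M^{-1}\in\left(\begin{smallmatrix}\afrak_M & \afrak_M^{-1}\\ \afrak_M & \afrak_M^{-1}\end{smallmatrix}\right)$) and extending a row $(c,d)$ with $(c)+(d)=\afrak_M$ to a unimodular matrix with top row in $\afrak_M^{-1}$ using $1\in\afrak_M^{-1}\bigl((c)+(d)\bigr)$. The organization differs, though: the paper chains three bijections, first $(\pm\Gamma_\8)\backslash\Gamma\to(0,1)\Gamma$, then translation by $M$ to the orbit $(-n,m)\Gamma$, then an identification of that orbit with the set of rows modulo $\Ocal^\times=\{\pm 1\}$, and finally matches the $\pm 1$ actions on the two sides to remove the quotients. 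You instead build a direct two-sided inverse and compute the fiber explicitly, showing two completions of the same bottom row differ by $\left(\begin{smallmatrix}1 & t\\ 0 & 1\end{smallmatrix}\right)$ with $t\in\afrak_M^{-2}$ (your deduction $t\afrak_M=t((c)+(d))\subset\afrak_M^{-1}$, hence $t\in\afrak_M^{-2}$, is the right one). This buys you two small things: you never introduce the $\pm 1$ quotients, and you make the injectivity explicit, whereas the paper's opening bijection $(\pm\Gamma_\8)\backslash\Gamma\to(0,1)\Gamma$ is asserted without proof and secretly contains the same stabilizer computation. The one point you should not leave as a ``standard fact'' is the reverse inclusion $\left(\begin{smallmatrix}\afrak_M^{-1} & \afrak_M^{-1}\\ \afrak_M & \afrak_M\end{smallmatrix}\right)\cap\SL_2(K)\subset M\Gamma$, which needs the explicit product $M^{-1}\alpha\in\Mat_2(\Ocal)\cap\SL_2(K)=\Gamma$; you flag it but should carry it out, as the paper does.
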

\begin{proof}
    First we have a bijection \begin{align}
    (\pm \Gamma_\8) \backslash \Gamma & \longrightarrow  (1,0) \Gamma  \nonumber \\
    \gamma=\begin{pmatrix} a & b \\ c & d \end{pmatrix} & \longmapsto (c,d)=(0,1)\gamma.
\end{align}
After moving the orbit by $M=\begin{pmatrix}
    y & -x \\ -n & m
\end{pmatrix}$ this becomes a bijection \begin{align}
    (\pm\Gamma(\afrak^{-2})_\8) \backslash M \Gamma & \longrightarrow (0,1)M \Gamma= (-n,m)\Gamma\nonumber \\
    \alpha=\begin{pmatrix} a & b \\ c & d \end{pmatrix} & \longmapsto (0,1)\alpha=(c,d)
\end{align}
where $\alpha$ is in $M\Gamma$. Moreover, the orbit $(-n:m)\Gamma$ is in bijection with the set 
\begin{align}
\left \{ \left . (c,d) \in \afrak_M \times \afrak_M \right \vert (c)+(d)=\afrak_M \right \}/\Ocal^\times
\end{align}
where $\Ocal^\times=\{ \pm 1\}$ acts diagonally on $\afrak_M \times \afrak_M$. First, for $\gamma \in \Gamma$ we have $(c,d)=(-n,m) \gamma \in \afrak_M \times \afrak_M.$
Furthermore, since the rows of $\gamma$ are coprime we have $(c) +(d)= (n)+(m)= \afrak_M$.
Conversely, suppose that we have a pair $(c,d) \in \afrak_M \times \afrak_M$ such that $(c)+(d)=\afrak_M$. Hence $(c)\afrak_M^{-1}+(d)\afrak_M^{-1}=\Ocal$ and we can find $(b,a) \in \afrak_M^{-1} \times \afrak_M^{-1}$ such that $bc-ad=1$. We then have a matrix
\begin{align}
    \begin{pmatrix}
        a & b \\ c & d
    \end{pmatrix} \in \begin{pmatrix}
       \afrak_M^{-1}  & \afrak_M^{-1} \\ 
       \afrak_M & \afrak_M 
    \end{pmatrix} \cap \SL_2(K).
\end{align}
Then
\begin{align}
    M^{-1}\begin{pmatrix}
        a & b \\ c & d
    \end{pmatrix} \in \begin{pmatrix}
        \afrak_M  & \afrak_M^{-1} \\ 
        \afrak_M & \afrak_M^{-1} 
    \end{pmatrix} \begin{pmatrix}
       \afrak_M^{-1}  & \afrak_M^{-1} \\ 
       \afrak_M & \afrak_M 
    \end{pmatrix} \cap \SL_2(K) = \Gamma,
\end{align}
and we can write
\begin{align}
    (c,d)=(0,1)\begin{pmatrix}
        a & b \\ c & d
    \end{pmatrix} & =(0,1)M \left ( M^{-1} \begin{pmatrix}
        a & b \\ c & d
    \end{pmatrix}\right ) \nonumber \\
    & =(-n,m)\left ( M^{-1} \begin{pmatrix}
        a & b \\ c & d
    \end{pmatrix}\right ) \in (-n,m) \Gamma.
\end{align}
Composing the two bijections we then have
\begin{align}
    (\pm\Gamma(\afrak_M^{-2})_\8) \backslash M \Gamma & \longrightarrow \left \{ \left . (c,d) \in \afrak_M \times \afrak_M  \right \vert (c)+(d)=\afrak_M\right \}/\Ocal^\times \nonumber \\
    \gamma = \begin{pmatrix} a & b \\ c & d \end{pmatrix} & \longmapsto (c,d).
\end{align}
The lemma follows from the observation that the action of $\pm 1$ on the left hand side correspond to the action of $\Ocal^\times$ on the right hand side.
\end{proof}

For $u=z+jv \in \HH_3$ let $z(u)=z$, $\bar{z}(u)={\bar{z}}$ and $v(u)=v$ be the coordinate functions. Let $\alpha \in \SL_2(\C)$ and 
\begin{align}
\eta(u,c,d) & \coloneqq \alpha^\ast(dz)    
\end{align} where $\alpha = \begin{pmatrix} a & b \\ c & d \end{pmatrix}$. It follows from \eqref{involutions} that
\begin{align}
\iota^\ast \eta(u,\overline{c},\overline{d}) = \iota^\ast I(\alpha)^\ast(dz)=\alpha^\ast (\iota^\ast dz) =\alpha^\ast (d \bar{z}).  
\end{align}
The following calculations will indeed show that $\eta$ and $\overline{\eta}$ depend only on $c$ and $d$. We have
\begin{align}
    z(\alpha u) = \frac{(az+b)\overline{(cz+d)}+a\overline{c}v^2}{\vert cz+d \vert^2+\vert c v\vert^2},
\end{align}
and
\begin{align} \label{gammav}
    v(\alpha u) = \frac{v}{\vert cz+d \vert^2+\vert c v\vert^2}.
\end{align}
We view $dz$ as the differential of the coordinate map $z(u)$, hence
\begin{align}
\eta(u,c,d) & =\eta(u,c,d)_zdz+\eta(u,c,d)_{\bar{z}}d\bar{z}+\eta(u,c,d)_v dv
\end{align} where
\begin{align} \label{jacobian}
    \eta(u,c,d)_z & = \frac{\partial z(\alpha u)}{\partial z} = \frac{\left ( \overline{cz+d}\right )^2}{\left (\vert cz+d \vert^2+\vert c v \vert^2 \right )^2}, \nonumber \\
    \eta(u,c,d)_{\bar{z}} & = \frac{\partial z(\alpha u)}{\partial \bar{z}} = \frac{-(\overline{c}v)^2}{\left (\vert cz+d \vert^2+\vert c v \vert^2 \right )^2}, \\
    \eta(u,c,d)_v & = \frac{\partial z(\alpha u)}{\partial v} = 2\frac{\left ( \overline{cz+d}\right )\overline{c}v}{\left (\vert cz+d \vert^2+\vert c v \vert^2 \right )^2}. \nonumber
\end{align} If $J((c,d),u)$ is the Jacobian
    \begin{align} \label{jacobian3}
        J(\alpha,u) \coloneqq \begin{pmatrix}
            \frac{\partial z(\alpha u)}{\partial z} & \frac{\partial z(\alpha u)}{\partial \bar{z}} & \frac{\partial z(\alpha u)}{\partial v} \\[0.5cm]
            \frac{\partial \bar{z}(\alpha u)}{\partial z} & \frac{\partial \bar{z}(\alpha u)}{\partial \bar{z}} & \frac{\partial \bar{z}(\alpha u)}{\partial v} \\[0.5cm]
            \frac{\partial v(\alpha u)}{\partial z} & \frac{\partial v(\alpha u)}{\partial \bar{z}} & \frac{\partial v(\alpha u)}{\partial v}
        \end{pmatrix},
    \end{align}
then
\begin{align} \label{jacobian2}
 \eta(u,c,d)=(1,0,0)J(\alpha,u) \begin{pmatrix} dz \\ d\bar{z} \\ dv
     \end{pmatrix}.
\end{align} 
It follows from the previous lemma
\begin{align}
    E_{\chi,\afrak}=\chi(\afrak)^{-1}\sum_{{\substack{(c,d) \in \afrak \times \afrak \\ (c)+(d)=\afrak_M}}} \eta(u,c,d).
\end{align}
This sum does not converge, and in order to regularize it we define for a complex number $s$:
\begin{align}
 \eta(u,c,d,s) & \coloneqq \alpha^\ast \left ( v^sdz \right ) \nonumber \\
 & =(1,0,0)J(\alpha,u)v(\alpha u)^s \begin{pmatrix} dz \\ d\bar{z} \\ dv
     \end{pmatrix}.   
\end{align}
More precisely we have
\begin{align}
    \eta(u,c,d,s) = \eta(u,c,d,s)_zdz+\eta(u,c,d,s)_{\bar{z}}d\bar{z}+\eta(u,c,d,s)_vdv
\end{align}
with
\begin{align}
    \eta(u,c,d,s)_z & \coloneqq v^{s}\frac{\left ( \overline{cz+d}\right )^2}{\left (\vert cz+d \vert^2+\vert c v \vert^2 \right )^{2+s}}, \nonumber \\
    \eta(u,c,d,s)_{\bar{z}} & \coloneqq v^{s}\frac{-(\overline{c}v)^2}{\left (\vert cz+d \vert^2+\vert c v \vert^2 \right )^{2+s}}, \\
    \eta(u,c,d,s)_v & \coloneqq  2v^{s}\frac{\left ( \overline{cz+d}\right )\overline{c}v}{\left (\vert cz+d \vert^2+\vert c v \vert^2 \right )^{2+s}} .\nonumber
\end{align}
For an ideal $\afrak$ and an unramified Hecke character $\chi$ of infinity type $(-2,0)$ we define
\begin{align} \label{firsteis}
    E_{\chi,\afrak} (u,s) & \coloneqq \chi(\afrak)^{-1}\N(\afrak)^s\sum_{{\substack{(c,d) \in \afrak \times \afrak \\ (c)+(d)=\afrak}}} \eta(u,c,d,s).
\end{align}
Let us also define the forms
\begin{align}
      \Ehat_{\chi,\afrak}(u,s) & \coloneqq \chi(\afrak)^{-1}\N(\afrak)^s \sum_{(c,d) \in \afrak \times \afrak} \eta(u,c,d,s),
\end{align} 
that appear in \cite{ito,bcg21}.
Note that here we sum over $\afrak \times \afrak$ instead of the subset with $(c)+(d)=\afrak$ as is \eqref{firsteis}. We have
\begin{align}
    \Ehat_{\chi,\afrak}(u,s)=\Ehat_{\chi,\afrak,z}(u,s)dz+\Ehat_{\chi,\afrak,\bar{z}}(u,s)d\bar{z}+\Ehat_{\chi,\afrak,v}(u,s)dv
\end{align}
where 
\begin{align}
    \Ehat_{\chi,\afrak,z}(u,s) & \coloneqq  \chi(\afrak)^{-1}\N(\afrak)^s\sum_{(c,d) \in \afrak \times \afrak} \eta(u,c,d,s)_z,
\end{align}
and similarly for $\Ehat_{\chi,\afrak,\bar{z}}(u,s)$ and $\Ehat_{\chi,\afrak,v}(u,s)$. By \eqref{jacobian2} we have
\begin{align} \label{Evector}
\left ( \Ehat_{\chi,\afrak,z}(u,s), \Ehat_{\chi,\afrak,\bar{z}}(u,s), \Ehat_{\chi,\afrak,v}(u,s) \right ) & = \chi(\afrak)^{-1}\N(\afrak)^s\sum_{(c,d) \in \afrak \times \afrak} (1,0,0)J(\alpha,u) v(\alpha u)^s.
\end{align}
Since  $\eta(u,\kappa c, \kappa d,s)=\kappa^{-2}\vert \kappa \vert^{-2s}\eta(u,c,d,s)$ and $\chi((\kappa))^{-1}\N(\kappa)^s=\kappa^2\vert \kappa \vert^{2s}$, the forms $E_{\chi,\afrak}(u,s)$ and $\Ehat_{\chi,\afrak}(u,s)$
do not depend on the choice of the representative $\afrak$ of the class $[\afrak]$.

\begin{prop}\label{eisensteinrelation} We have
\[\Ehat_{\chi,\afrak}(u,s) = \sum_{i=1}^h \frac{\chi(\afrak_i\afrak^{-1})}{w(\afrak_i^{-1}\afrak)\N(\afrak_i\afrak^{-1})^{s}}G(1+s,2,0,0;\afrak_i^{-1}\afrak)E_{\chi, \afrak_i}(u,s).
\]  
\end{prop}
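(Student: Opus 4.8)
The plan is to establish the identity first for $\re(s)>1$, where every series involved converges absolutely and pointwise in $u\in\HH_3$, and then to extend it to all $s$ by analytic continuation, using the meromorphic continuation of the Kronecker--Eisenstein series recalled in Section~\ref{eisdefs}.

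The first, and main, step is to reorganise the sum defining $\Ehat_{\chi,\afrak}(u,s)$ by ``content ideal''. For a nonzero pair $(c,d)\in\afrak\times\afrak$ put $\bfrak\coloneqq(c)+(d)$, a nonzero fractional ideal contained in $\afrak$; every nonzero pair lies over exactly one such $\bfrak$, and the condition $(c)+(d)=\bfrak$ already forces $(c,d)\in\bfrak\times\bfrak$. Hence, by absolute convergence,
\[
\Ehat_{\chi,\afrak}(u,s)=\chi(\afrak)^{-1}\N(\afrak)^s\sum_{\bfrak\subseteq\afrak}\ \sum_{\substack{(c,d)\in\bfrak\times\bfrak\\ (c)+(d)=\bfrak}}\eta(u,c,d,s),
\]
the outer sum running over nonzero fractional ideals $\bfrak\subseteq\afrak$. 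By the definition~\eqref{firsteis} of $E_{\chi,\bfrak}$, together with the fact (noted just before the statement) that $E_{\chi,\bfrak}(u,s)$ depends only on the class $[\bfrak]$, the inner sum equals $\chi(\bfrak)\N(\bfrak)^{-s}E_{\chi,\bfrak}(u,s)$. Writing $\bfrak=\dfrak\afrak$ with $\dfrak$ a nonzero integral ideal and using that $\chi$ and $\N$ are multiplicative, the whole expression collapses to
\[
\Ehat_{\chi,\afrak}(u,s)=\sum_{\dfrak\subseteq\Ocal}\chi(\dfrak)\N(\dfrak)^{-s}\,E_{\chi,\dfrak\afrak}(u,s).
\]

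Next I would sort the integral ideals $\dfrak$ by ideal class. Since $E_{\chi,\dfrak\afrak}=E_{\chi,\afrak_i}$ precisely when $[\dfrak]=[\afrak_i\afrak^{-1}]$, the last sum becomes $\sum_{i=1}^h E_{\chi,\afrak_i}(u,s)\,Z_i(s)$, where $Z_i(s)\coloneqq\sum_{\dfrak\subseteq\Ocal,\ [\dfrak]=[\afrak_i\afrak^{-1}]}\chi(\dfrak)\N(\dfrak)^{-s}$ is a partial $\chi$-twisted zeta sum. This I would evaluate exactly as in the manipulation of $L(\chi,s)$ leading to~\eqref{Lfuncsplit}: the nonzero integral ideals in the class $[\afrak_i\afrak^{-1}]$ are parametrised as $\dfrak=(\beta)\,\afrak_i\afrak^{-1}$ with $\beta$ ranging over $\afrak_i^{-1}\afrak\setminus\{0\}$ modulo $\Ocal^{\times}$, each ideal occurring exactly $|\Ocal^{\times}|$ times. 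Since $\chi$ has infinity type $(-2,0)$ we have $\chi((\beta))=\beta^{-2}$, and $\N((\beta))=|\beta|^2$, so
\begin{align*}
Z_i(s) &= \frac{\chi(\afrak_i\afrak^{-1})}{|\Ocal^{\times}|\,\N(\afrak_i\afrak^{-1})^{s}}\sum_{\beta\in\afrak_i^{-1}\afrak\setminus\{0\}}\beta^{-2}|\beta|^{-2s} \\
&= \frac{\chi(\afrak_i\afrak^{-1})}{w(\afrak_i^{-1}\afrak)\,\N(\afrak_i\afrak^{-1})^{s}}\,G(1+s,2,0,0;\afrak_i^{-1}\afrak),
\end{align*}
where the last equality uses $\theta(0)=1$ and the elementary identity $\overline{\omega}^2/|\omega|^{2(1+s)+2}=\omega^{-2}|\omega|^{-2s}$ to recognise the Kronecker--Eisenstein series, and $w(\afrak_i^{-1}\afrak)$ is the unit factor (equal to $|\Ocal^{\times}|$ here). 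Substituting $Z_i(s)$ back gives the asserted formula for $\re(s)>1$, and analytic continuation completes the proof.

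The computation is mostly bookkeeping, and the point that needs the most care is the very first step: verifying that sending $(c,d)$ to its content ideal $(c)+(d)$ genuinely matches up the $\Ocal^{\times}$-orbits being summed with the data appearing on the right. Concretely one must check that $\bfrak\subseteq\afrak$ is equivalent to $\bfrak=\dfrak\afrak$ for a unique nonzero integral $\dfrak$; that for each $\dfrak$ the ``primitive'' set $\{(c,d):(c)+(d)=\dfrak\afrak\}$ is exactly the index set of $E_{\chi,\dfrak\afrak}$ in~\eqref{firsteis}; and that passing from $\dfrak$ to the generator $\beta$ contributes precisely the factor recorded in $w$. None of these is deep, but the arguments of $\chi$, $\N$, $w$ and $G$ come out correctly only if the bijections are aligned exactly. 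All the rearrangements are justified by absolute convergence for $\re(s)>1$, where the terms have size $|(c,d)|^{-2-2\re(s)}$ and are summed over the rank-$4$ lattice $\afrak\times\afrak$ (respectively, size $|\omega|^{-2-2\re(s)}$ over the rank-$2$ lattice $\afrak_i^{-1}\afrak$ for the series $G$).
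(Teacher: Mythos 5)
Your proposal is correct and follows essentially the same route as the paper: decompose the sum over $(c,d)\in\afrak\times\afrak$ according to the content ideal $(c)+(d)=\dfrak\afrak$, recognise each primitive piece as $\chi(\dfrak)\N(\dfrak)^{-s}E_{\chi,\dfrak\afrak}(u,s)$, sort the integral ideals $\dfrak$ by class, and evaluate the resulting partial zeta sums as Kronecker--Eisenstein series with the unit factor $w$. The only addition is your explicit convergence/continuation bookkeeping, which the paper defers to Proposition \ref{continuation}.
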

\begin{proof}
First we have
\begin{align}
    \Ehat_{\chi,\afrak}(u,s) & = \frac{\N(\afrak)^s}{\chi(\afrak)}\sum_{(c,d) \in \afrak \times \afrak} \eta(u,c,d,s) \nonumber \\
    & = \sum_{0 \neq \cfrak \subseteq \Ocal} \frac{\chi(\cfrak)}{\N(\cfrak)^s}\frac{\N(\afrak\cfrak)^s}{\chi(\afrak\cfrak)} \sum_{\substack{(c,d) \in \afrak \times \afrak \\ (c)+(d)=\afrak\cfrak}} \eta(u,c,d,s).
\end{align}
Note that if $(c,d) \in \afrak \times \afrak$ with $(c)+(d)=\afrak\cfrak$, then $(c,d) \in \afrak \cfrak \times \afrak \cfrak$. Hence
\begin{align}
    \Ehat_{\chi,\afrak}(u,s) & = \sum_{0 \neq \cfrak \subseteq \Ocal} \frac{\chi(\cfrak)}{\N(\cfrak)^{s}} E_{\chi, \cfrak\afrak}(u,s).
\end{align}
We write $\cfrak\afrak=\afrak_i(\alpha)$ for some representative $\afrak_i$ of the class group. Then $E_{\chi, \cfrak\afrak}(u,s)=E_{\chi, \afrak_i}(u,s)$ and we get
\begin{align}
    \Ehat_{\chi,\afrak}(u,s) & = \sum_{i=1}^h \frac{1}{w(\afrak_i^{-1}\afrak)}\frac{\chi(\afrak_i\afrak^{-1})}{\N(\afrak_i\afrak^{-1})^{s}} \sum_{0 \neq \alpha \subseteq \afrak_i^{-1}\afrak} \frac{1}{\alpha^2\vert \alpha \vert^{2s}} E_{\chi, \afrak_i}(u,s).
\end{align}
\end{proof}

\begin{prop} \label{continuation}
The series $\Ehat_{\chi,\afrak}(u,s)$ and $E_{\chi,\afrak} (u,s)$ converge for $\re(s) \gg 0$ and admit an analytic continuation to the whole plane. Moreover, at $s=0$, the forms $\Ehat_{\chi,\afrak}(u) \coloneqq \restr{\Ehat_{\chi,\afrak}(u,s)}{s=0}$ and $E_{\chi,\afrak}(u)\coloneqq \restr{E_{\chi,\afrak}(u,s)}{s=0}$ at $s=0$ are closed and are related by
\[\Ehat_{\chi,\afrak}(u) = \sum_{i=1}^h \frac{\chi(\afrak_i\afrak^{-1})}{w(\afrak_i^{-1}\afrak)}G_2(\afrak_i^{-1}\afrak)E_{\chi, \afrak_i}(u).
\] 
\end{prop}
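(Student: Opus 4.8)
The plan is to prove, in this order, absolute convergence for $\re(s)$ large, meromorphic continuation in $s$, holomorphy together with closedness at $s=0$, and finally the identity at $s=0$; all four are handled for $\Ehat_{\chi,\afrak}(u,s)$ and $E_{\chi,\afrak}(u,s)$ in parallel. For convergence, fix $u=z+jv\in\HH_3$: since $v>0$ the quantity $|cz+d|^2+|cv|^2$ is a positive definite quadratic form in $(c,d)$, hence $\gg_u |c|^2+|d|^2$, and the component formulas \eqref{jacobian} then show that every component of $\eta(u,c,d,s)$ is $O_u\big((|c|^2+|d|^2)^{-1-\re(s)}\big)$, locally uniformly in $u$, and the same bound holds after differentiating in $u$. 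Summing over the rank-$4$ lattice $\afrak\times\afrak$, resp.\ its subset with $(c)+(d)=\afrak$ in \eqref{firsteis}, therefore defines a smooth form for $\re(s)>1$.

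For the continuation I would use Proposition \ref{eisensteinrelation}, which for $\re(s)>1$ reads $\widehat{\mathbf E}(u,s)=C(s)\,\mathbf E(u,s)$, where $\widehat{\mathbf E}=(\Ehat_{\chi,\afrak_1},\dots,\Ehat_{\chi,\afrak_h})$, $\mathbf E=(E_{\chi,\afrak_1},\dots,E_{\chi,\afrak_h})$, and $C(s)_{ij}=w(\afrak_j^{-1}\afrak_i)^{-1}\,\chi(\afrak_j\afrak_i^{-1})\,\N(\afrak_j\afrak_i^{-1})^{-s}\,G(1+s,2,0,0;\afrak_j^{-1}\afrak_i)$. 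Each $G(1+s,2,0,0;L)$ is \emph{entire} in $s$, since the two possible poles of the Kronecker--Eisenstein series recalled after \eqref{funceq2} occur only when the harmonic-polynomial degree is $0$, whereas here it is $2$; thus $C(s)$ is an entire $\Mat_h(\C)$-valued function, and as $\re(s)\to+\infty$ its $(i,j)$ entry is dominated by the least-norm integral ideal in the class $[\afrak_j\afrak_i^{-1}]$, which has norm $1$ exactly when $i=j$, so $C(s)\to\id$ and $\det C(s)\not\equiv 0$. Consequently it is enough to continue one of the two families and transport the continuation through $C(s)^{-1}$; the continuation of $\Ehat_{\chi,\afrak}(u,s)$ to all of $\C$ is the one provided by \cite{ito,bcg21} (equivalently, $E_{\chi,\afrak}(u,s)$ is, up to the elementary factor $\chi(\afrak)^{-1}\N(\afrak)^{s}$, a classical non-holomorphic Eisenstein series for $\SL_2(\Ocal)$ attached to a cusp, continued via its Fourier expansion). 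All the forms above are then holomorphic at $s=0$: $C(s)$ is entire with $\det C(0)\neq 0$ (by Proposition \ref{eisensteinrelation} at $s=0$, $\det C(0)$ equals, up to sign, $L(\chi\circ\N_{H/K},0)=\prod_{\varphi}L(\varphi\chi,0)$, nonzero by the nonvanishing part of Proposition \ref{Lalgint} applied to each twist $\varphi\chi$), while the Fourier expansion of $E_{\chi,\afrak}(u,s)$ at the cusps involves only Kronecker--Eisenstein series of harmonic-polynomial degree $2$, hence no pole near $s=0$.

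For closedness at $s=0$, write $\eta(u,c,d,s)=\alpha^\ast(v^{s}dz)$; from $d(v^{s}dz)=s\,v^{s-1}\,dv\wedge dz$ and the locally uniform convergence we may interchange $d$ with the sum to obtain, for $\re(s)>1$,
\[
dE_{\chi,\afrak}(u,s)=s\,F_{\chi,\afrak}(u,s),\qquad
F_{\chi,\afrak}(u,s):=\chi(\afrak)^{-1}\N(\afrak)^{s}\!\!\sum_{\substack{(c,d)\in\afrak\times\afrak\\(c)+(d)=\afrak}}\!\!\alpha^\ast\big(v^{s-1}dv\wedge dz\big),
\]
and likewise $d\Ehat_{\chi,\afrak}(u,s)=s\,\widehat F_{\chi,\afrak}(u,s)$. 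The $2$-forms $F_{\chi,\afrak}$, $\widehat F_{\chi,\afrak}$ converge for $\re(s)>1$ by the same estimate and continue meromorphically; the crucial point is that they are \emph{holomorphic at $s=0$}, which I would read off from the constant term at the cusps. Indeed, the constant term of $E_{\chi,\afrak}(u,s)$ is a combination of $v^{s}dz$ and $v^{-s}d\bar z$ with coefficients assembled from $G(1+s,2,0,0;\cdot)$ and rational functions of $s$ regular at $0$, so that applying $d$ genuinely produces the factor $s$ (every exponent of $v$ that appears vanishes at $s=0$); hence $\tfrac1s\,dE_{\chi,\afrak}$ has a constant term regular at $s=0$, and since the poles of an Eisenstein series are those of its constant term, $F_{\chi,\afrak}$ and $\widehat F_{\chi,\afrak}$ are holomorphic at $s=0$. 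Therefore $dE_{\chi,\afrak}(u)=\restr{\big(s\,F_{\chi,\afrak}(u,s)\big)}{s=0}=0$ and $d\Ehat_{\chi,\afrak}(u)=0$.

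Finally, every term of Proposition \ref{eisensteinrelation} is holomorphic at $s=0$ by the above, so one specializes there directly: $\N(\afrak_i\afrak^{-1})^{0}=1$ and $G(1,2,0,0;L)=G_2(0,L)=G_2(L)$, which gives the asserted relation between $\Ehat_{\chi,\afrak}(u)$ and the $E_{\chi,\afrak_i}(u)$. The main obstacle is the analytic content of the middle two steps --- the meromorphic continuation, and above all the holomorphy at $s=0$ of the Eisenstein series and of the auxiliary $2$-form series $F_{\chi,\afrak},\widehat F_{\chi,\afrak}$, which rests on the explicit constant-term computation at the cusps; once that is in hand (or quoted from \cite{ito,bcg21}), the convergence estimate, the matrix bookkeeping, and the specialization of Proposition \ref{eisensteinrelation} at $s=0$ are routine.
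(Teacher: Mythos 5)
Your proposal is correct and follows the same skeleton as the paper's (very terse) proof: the paper simply cites \cite[p.~18]{bcg21} for the continuation of $\Ehat_{\chi,\afrak}(u,s)$ by Poisson summation and \cite[Proposition~3.3]{bcg21} for closedness, and then says the statements for $E_{\chi,\afrak}$ follow ``by the previous proposition,'' i.e.\ by Proposition \ref{eisensteinrelation}. What you add, usefully, is the bookkeeping the paper leaves implicit: Proposition \ref{eisensteinrelation} expresses $\Ehat$ in terms of $E$, so transferring the continuation to $E$ requires inverting the matrix $C(s)$, and you correctly isolate the two analytic inputs needed for that — entirety of $G(1+s,2,0,0;L)$ (no pole since $k=2$) and $\det C(s)\not\equiv 0$ via the limit $C(s)\to\id$ as $\re(s)\to+\infty$, with regularity at $s=0$ guaranteed by $\det C(0)=L(\chi\circ\N_{H/K},0)\neq 0$ from Proposition \ref{Lalgint}. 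You also supply a self-contained closedness argument ($dE_{\chi,\afrak}(u,s)=s\,F_{\chi,\afrak}(u,s)$ with $F$ regular at $s=0$) in place of the citation; this is essentially the mechanism behind \cite[Proposition~3.3]{bcg21}. Two minor caveats, neither fatal: (i) inverting $C(s)$ a priori yields only a \emph{meromorphic} continuation of $E_{\chi,\afrak}(u,s)$, with possible poles at zeros of $\det C(s)$ away from $s=0$ — your parenthetical remark that $E_{\chi,\afrak}$ can instead be continued directly via its own Fourier expansion is the clean way to get entirety, and is worth promoting from an aside to the actual argument; (ii) the principle ``the poles of an Eisenstein series are those of its constant term,'' applied to the $2$-form-valued series $F_{\chi,\afrak}$, is a genuine theorem but you invoke it without verifying that $F_{\chi,\afrak}$ sits in the framework where it applies — computing the constant term of $dE_{\chi,\afrak}(u,s)$ directly from the displayed Fourier expansions (all nonconstant terms being exponentially decaying Bessel terms, hence harmless) closes this more economically.
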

\begin{proof}
The analytic continuation of $\Ehat_{\chi,\afrak}(u,s)$ can be done by Poisson summation, see for example \cite[page.~18]{bcg21}. The fact that the forms $\Ehat_{\chi,\afrak}(u,s)$ are closed is the content of \cite[Proposition.~3.3]{bcg21}. The same results holds for $E_{\chi,\afrak}$ by the previous proposition.
\end{proof}

\paragraph{The Eisenstein operator.} It follows from \eqref{involutions} and \eqref{involution2} that
\begin{align}
    \Eis(\bar{\omega}_{\chi,r})=\Eis(\iota^\ast\omega_{\chi,\bar{r}})=\iota^\ast \Eis(\omega_{\chi,\bar{r}})=\iota^\ast E_{\chi,c},
\end{align}
where $\iota$ is the involution induced by complex conjugation. The cohomology $H^1(\Gamma_r\backslash\Hcal_r;\C)$ is spanned by $\omega_{\chi,r}$ and $\bar{\omega}_{\chi,r}$.
Hence if $\epsilon_r=\alpha\omega_{\chi,r}+\beta\bar{\omega}_{\chi,r}$ we have
\begin{align}
    \Eis(\epsilon_r)=\alpha E_{\chi,c}+\beta \iota^\ast E_{\chi,c}.
\end{align}
Since $H^1(\partial X_\Gamma;\C)=\bigoplus_{c=[r] \in C_\Gamma} H^1(\Gamma_r\backslash\Hcal_r;\C)$, we have a map
\begin{align} \label{eisdef2}
        \Eis \colon \cohom^1(\partial X_\Gamma;\C) & \longrightarrow \cohom^1(Y_\Gamma;\C) \nonumber \\
        \sum_{c=[r] \in C_\Gamma}\lambda_{c}\epsilon_{r} & \longmapsto \sum_{c=[r] \in C_\Gamma} \lambda_{c} (\alpha E_{\chi,c}+\beta \iota^\ast E_{\chi,c}).
\end{align}
This the map \eqref{eismapintro} in the introduction.

\subsection{Fourier expansions and constant terms} 
\paragraph{At the cusp $\8$.} By Ito's computation \cite[p.~152]{ito}, we have the following Fourier expansions of the Eisenstein series:

\begin{align}
    \Ehat_{\chi,\afrak,z}(u,s)= & \chi(\afrak)^{-1} \N(\afrak)^sv^sG(1+s,2,0,0;\afrak) \nonumber \\
    & -\frac{2i(2\pi)^{s+2}}{\Dcal(\afrak)\Gamma(s+2)}v\sum_{\substack{(c,d)\in \afrak \times \afrak^{\vee} \\ cd \neq 0}}d^2 \left \vert \frac{d}{c} \right \vert^{s-1}K_{s-1}(4\pi \vert cd \vert v)e(cdz) 
\end{align}
and
\begin{align}
    \Ehat_{\chi,\afrak,\bar{z}}(u,s)= & -\frac{\chi(\afrak)^{-1} \N(\afrak)^s2i\pi}{D(\afrak)}G(s,2,0,0;\afrak) \nonumber \\
    & -\frac{2i(2\pi)^{s+2}}{\Dcal(\afrak)\Gamma(s+2)}v\sum_{\substack{(c,d)\in \afrak \times \afrak^{\vee} \\ cd \neq 0}}\bar{c}^2 \left \vert \frac{d}{c} \right \vert^{s+1}K_{s+1}(4\pi \vert cd \vert v)e(cdz), 
\end{align}
where $K_\nu(t)$ is the $K$-Bessel function, $e(z) \coloneqq \exp(2i\pi(z+\bar{z}))$, and $L^{\vee} \subset \C$ is the lattice dual to a lattice $L$. In particular at $s=0$ the constant term of $\Ehat_{\chi,\afrak,z}(u)$ is $\chi(\afrak)^{-1} G_2(\afrak).$
By the functional equation \eqref{funceq2}, we find that the constant term of $\Ehat_{\chi,\afrak,\bar{z}}(u)$ is
\begin{align}
   -\chi(\afrak)^{-1}G(\afrak)=-\chi(\afrak)^{-1}G_2(\afrak).
\end{align}

\paragraph{At other cusps.} Let $M$ be a matrix in $\SL_2(K)$ such that $Mr= \8$, as before. In this section let $N$ denote the inverse of $M$. Hence
\begin{align}
    N=M^{-1}= \begin{pmatrix}
    m & x \\
    n & y
    \end{pmatrix} \in \begin{pmatrix}
    \afrak_M & \afrak_M^{-1} \\
    \afrak_M & \afrak_M^{-1}
    \end{pmatrix}
\end{align}
where $\afrak_M=(m)+(n)$.
    Since
    \begin{align}
      \Ehat_{\chi,\afrak}(u)=(\Ehat_{\chi,\afrak,z}(u),\Ehat_{\chi,\afrak,\bar{z}}(u),\Ehat_{\chi,\afrak,v}(u)) \begin{pmatrix}
          dz \\ d\bar{z} \\ dv
      \end{pmatrix},
    \end{align}
    we have
    \begin{align}
      (N^\ast \Ehat_{\chi,\afrak}(u)=(\Ehat_{\chi,\afrak,z}(Nu),\Ehat_{\chi,\afrak,\bar{z}}(Nu),\Ehat_{\chi,\afrak,v}(Nu))J(N,u) \begin{pmatrix}
          dz \\ d\bar{z} \\ dv
      \end{pmatrix},
    \end{align} where $J(N,u)$ is the Jacobian as in \eqref{jacobian3}. We have  \begin{align}
        (\Ehat_{\chi,\afrak,z}(Nu),\Ehat_{\chi,\afrak,\bar{z}}(Nu),\Ehat_{\chi,\afrak,v}(Nu))= \chi(\afrak)^{-1} \sum_{\alpha \in \afrak \times \afrak} (
        1, 0, 0)J(\alpha,Nu),
    \end{align}
    where $\alpha=\begin{pmatrix}
        \ast & \ast \\ c & d
    \end{pmatrix}$. Since $J(\alpha,Nu)J(N,u)=J(\alpha N,u)$, we have
    \begin{align}
    N^\ast \Ehat_{\chi,\afrak}(u) = \chi(\afrak)^{-1} \sum_{(c,d) \in \afrak \times \afrak} (
        1, 0, 0)J(\alpha N,u) \begin{pmatrix}
          dz \\ d\bar{z} \\ dv
      \end{pmatrix}.   
    \end{align}
Let us define
    \begin{align}
      (\Ehat^{(N)}_z(u,s),\Ehat^{(N)}_{\bar{z}}(u,s),\Ehat^{(N)}_v(u,s))\coloneqq \chi(\afrak)^{-1} \N(\afrak)^s\sum_{(c,d) \in \afrak \times \afrak} (
        1, 0, 0)J(\alpha N,u)v(\alpha Nu)^s,
    \end{align}
    so that at $s=0$
    \begin{align}
      N^\ast \Ehat_{\chi,\afrak}(u) = (\Ehat^{(N)}_z(u),\Ehat^{(N)}_{\bar{z}}(u),\Ehat^{(N)}_v(u)) \begin{pmatrix}
          dz \\ d\bar{z} \\ dv
      \end{pmatrix}.
    \end{align}
Since these series only appear in this section, we drop the indices $\chi$ and $\afrak$ to simplify the notation. For $p$ in $\afrak \afrak_M$ we define $L_p \subset \C$ be the set of complex numbers $q=cx+dy$ where $(c,d) \in \afrak \times \afrak $ and $cm+dn=p$. By \cite[p.~162]{ito} there is a complex number $w(p)$ such that $L_p=w(p)+\afrak \afrak_M^{-1}$.
\begin{lem} \label{lemL0} For $p=0$ we have $L_0=\afrak \afrak_M^{-1}$.
\end{lem}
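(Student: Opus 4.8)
The plan is to parametrize directly the pairs $(c,d)\in\afrak\times\afrak$ with $cm+dn=0$ and then read off the associated values $q=cx+dy$.

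First I would solve the equation $cm+dn=0$ over $K$. Since $r=(m:n)\in\PP^1(K)$, the pair $(m,n)$ is nonzero, so the $K$-linear map $K^2\to K$, $(c,d)\mapsto cm+dn$, is surjective with one-dimensional kernel; as $(n,-m)$ lies in this kernel and is nonzero, it spans it. Hence the solutions in $K^2$ are exactly the pairs $(c,d)=(tn,-tm)$ with $t\in K$.

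Next I would impose the integrality condition $(c,d)\in\afrak\times\afrak$. The requirements $tn\in\afrak$ and $-tm\in\afrak$ are equivalent to $t(m)\subseteq\afrak$ and $t(n)\subseteq\afrak$, hence to $t\bigl((m)+(n)\bigr)=t\,\afrak_M\subseteq\afrak$, i.e.\ to $t\in\afrak\afrak_M^{-1}$. For such a $t$ the corresponding value of $q$ is
\begin{align}
    q = cx+dy = tnx-tmy = -t(my-xn) = -t\det(N) = -t,
\end{align}
using that $N\in\SL_2(K)$. Therefore $L_0=\{-t : t\in\afrak\afrak_M^{-1}\}=\afrak\afrak_M^{-1}$, the last equality holding because the fractional ideal $\afrak\afrak_M^{-1}$ is stable under negation. (Alternatively, since $(c,d)=(0,0)$ shows $0\in L_0$, this together with the already recorded fact $L_p=w(p)+\afrak\afrak_M^{-1}$ forces $w(0)\in\afrak\afrak_M^{-1}$ and hence $L_0=\afrak\afrak_M^{-1}$.)

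There is no genuine obstacle in this argument; the only step deserving a moment's care is the chain of equivalences identifying the admissible $t$ with $t\in\afrak\afrak_M^{-1}$, which relies on $\afrak_M$ being generated precisely by the two entries $m$ and $n$ appearing in the first column of $N$.
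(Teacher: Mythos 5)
Your proof is correct and is in substance the same as the paper's: the paper's computation $nq=-c$, $mq=d$ is exactly the inverse of your parametrization $(c,d)=(tn,-tm)$, $q=-t$, both resting on $\det N=my-xn=1$. Your version is organized slightly more directly---you characterize the admissible parameter in one step via $t\,\afrak_M\subseteq\afrak$, whereas the paper first identifies $L_0$ with $\afrak(m)^{-1}\cap\afrak(n)^{-1}$ and then sandwiches that between $\afrak\afrak_M^{-1}$ and $L_0$---but there is no substantive difference.
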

\begin{proof}
    Let $q \in L_0$. We can write $q=cx+dy$ where $(c,d) \in \afrak \times \afrak$ and $cm+dn=0$. Then
    \begin{align}
        nq & =ncx+ndy=ncx +y(-cm)=-c(my-nx)=-c \in \afrak, \nonumber \\
        mq & =mcx+mdy=(-dn)x +mdy=d \in \afrak.
    \end{align}
Hence $L_0 \subset \afrak(n)^{-1} \cap \afrak(m)^{-1}$. Conversely, suppose that $q \in \afrak(n)^{-1} \cap \afrak(m)^{-1}$. Then we can write $nq=c$ and $mq=d$ for some $(c,d)\in \afrak \times \afrak$. Then $q \in L_0$ since $  q=(xn-my)q=cx-dy$. Hence we have proved that $L_0=\afrak(n)^{-1} \cap \afrak(m)^{-1}$. In particular, we have $\afrak_M^{-1} \subset (n)^{-1}$ and $\afrak_M^{-1} \subset (m)^{-1}$ since $n \in \afrak_M$ and $m \in \afrak_M$ . It follows that $\afrak \afrak_M^{-1} \subset \afrak(n)^{-1} \cap \afrak(m)^{-1}=L_0$. On the other hand, we have that any $q=cx+dy \in L_0$ is in $\afrak \afrak_M^{-1}$ since $(x,y) \in \afrak_M^{-1} \times \afrak_M^{-1}$.
\end{proof}
Hence we have
\begin{align}
     (\Ehat^{(N)}_z(u,s),\Ehat^{(N)}_{\bar{z}}(u,s),\Ehat^{(N)}_v(u,s))= & \chi(\afrak)^{-1} \N(\afrak)^s\sum_{p \in \afrak \afrak_M} \sum_{q \in L_p} (
        1, 0, 0)J(\widetilde{\alpha},u)v(\widetilde{\alpha} u)^s, \nonumber \\
        & = \chi(\afrak)^{-1} \N(\afrak)^s\sum_{p \in \afrak \afrak_M} \sum_{q \in L_p} \eta(u,p,q,s)
    \end{align}
where $\widetilde{\alpha}=\alpha N= \begin{pmatrix} \ast & \ast \\ p & q \end{pmatrix}$. In particular, the $z$-component is
\begin{align}
     \Ehat_z^{(N)}(u,s)= \chi(\afrak)^{-1} \N(\afrak)^sv^s\sum_{p \in \afrak \afrak_M} \sum_{q \in L_p} \frac{\left ( \overline{p z+q}\right )^2}{\left (\vert p z+q \vert^2+\vert p v \vert^2 \right )^{2+s}}
\end{align}
where $p$ and $q$ are not both zero. The constant term is coming from the terms where $p=0$
\begin{align}
    \chi(\afrak)^{-1} \N(\afrak)^sv^s \sum_{q \in L_0} \frac{1}{ q^2\vert q \vert^{2s}}=\chi(\afrak)^{-1}\N(\afrak)^{s}v^sG(1+s,2,0,0;\afrak \afrak_M^{-1}),
\end{align}
since by Lemma \ref{lemL0} we have $L_0=\afrak \afrak_M^{-1}$. Hence at $s=0$ the constant term is $\chi(\afrak)^{-1}G_2(\afrak \afrak_M^{-1})$. As in \cite[p.~162]{ito} we find that the constant term of the $\bar{z}$-component at $s=0$ is
\begin{align}
    \restr{-\frac{\chi(\afrak)^{-1} \N(\afrak)^s2i\pi}{\Dcal(\afrak \afrak_M^{-1})}G(s,2,0,0;\afrak\afrak_M)}{s=0} & = -\frac{\chi(\afrak)^{-1} \Dcal(\afrak\afrak_M)}{\Dcal(\afrak\afrak_M^{-1})} G(\afrak\afrak_M) \nonumber \\
    & = -\chi(\afrak)^{-1} \N(\afrak_M)^2G_2(\afrak\afrak_M).
\end{align}
The last equality follows from the following lemma.
\begin{lem}
   For a fractional ideal $\bfrak$ we have $\Dcal(\bfrak)=-\sqrt{D}\N(\bfrak)$.
\end{lem}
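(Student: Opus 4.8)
The plan is to pin down $\Dcal(\bfrak)$ by computing its modulus and its argument separately, reducing both to the single case $\bfrak=\Ocal$. First I would fix a $\Z$-basis $\omega_1,\omega_2$ of $\bfrak$ with $\omega_1/\omega_2$ in the upper half-plane, so that $\Dcal(\bfrak)=\omega_1\overline{\omega}_2-\overline{\omega}_1\omega_2=2i\,\im(\omega_1\overline{\omega}_2)$ and $|\Dcal(\bfrak)|=2\,\vol(\C/\bfrak)$, the covolume being taken for Lebesgue measure on $\C$. Since $\N(\afrak)=[\Ocal:\afrak]$ for an integral ideal $\afrak$ and covolumes scale by the lattice index, one gets in the standard way that $\vol(\C/\bfrak)=\N(\bfrak)\,\vol(\C/\Ocal)$ for every fractional ideal $\bfrak$ (equivalently $\Dcal(\bfrak)^2=\N(\bfrak)^2\operatorname{disc}(\Ocal)$ via the trace form), hence $|\Dcal(\bfrak)|=\N(\bfrak)\,|\Dcal(\Ocal)|$.

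Next I would evaluate $\Dcal(\Ocal)$ by hand: writing $\Ocal=\Z+\Z\tau$ with $\tau=\tfrac{1+\sqrt{D}}{2}$ and choosing, among the ordered bases $(\tau,1)$ and $(1,\tau)$, the one whose quotient lies in the upper half-plane, one has $\Dcal(\Ocal)=\pm(\tau-\overline{\tau})=\pm\sqrt{D}$; tracking the sign — using that $\Dcal(\Ocal)=2i\,|\omega_2|^2\im(\omega_1/\omega_2)$ is a positive multiple of $i$, together with the fixed embedding of $K$ in $\C$ — gives $\Dcal(\Ocal)=-\sqrt{D}$, so in particular $|\Dcal(\bfrak)|=\N(\bfrak)\sqrt{|D|}$.

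Finally, for a general $\bfrak$ the number $\Dcal(\bfrak)=2i\,|\omega_2|^2\im(\omega_1/\omega_2)$ is again a positive multiple of $i$, while $-\sqrt{D}\,\N(\bfrak)$ is the positive multiple of $i$ of modulus $\N(\bfrak)\sqrt{|D|}$; having the same modulus and the same argument, the two coincide, which is the claim $\Dcal(\bfrak)=-\sqrt{D}\,\N(\bfrak)$. I do not expect a genuine obstacle here: the content is the classical relation between $\omega_1\overline{\omega}_2-\overline{\omega}_1\omega_2$ and the covolume (resp. the discriminant) of $\bfrak$, and the only delicate point is the bookkeeping of signs — which ordered basis realizes the upper half-plane condition, and which square root of $D$ is in force — which is settled once and for all by the computation $\Dcal(\Ocal)=-\sqrt{D}$.
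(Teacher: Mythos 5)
Your proof is correct and is essentially the paper's argument in different packaging: the paper writes a basis of $\bfrak$ over the basis $1,\tau$ of $\Ocal$ and factors the $2\times 2$ determinant defining $\Dcal(\bfrak)$ as $(ad-bc)(\overline{\tau}-\tau)$, which is precisely your ``covolume scales by the index $=\N(\bfrak)$'' step combined with the base case $\Dcal(\Ocal)=-\sqrt{D}$. Both arguments settle the sign the same way --- $\Dcal(\bfrak)$ lies on the positive imaginary ray, which (as you correctly track) forces the convention $\im\sqrt{D}<0$ implicit in the statement.
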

\begin{proof}
 Recall that if $L=\omega_1\Z+\omega_2\Z$ with $\im(\omega_1/\omega_2)>0$ then
 \[\Dcal(L)=\omega_1\overline{\omega_2}-\overline{\omega_1}\omega_2=\begin{vmatrix} \omega_1 & \omega_2 \\ \overline{\omega_1} & \overline{\omega_2} \end{vmatrix}.\]
 Let $\bfrak=(a+b\tau)\Z+(c+d\tau)\Z$ a basis of $\bfrak$ such that
 \[\im \left (\frac{a+b\tau}{c+d\tau} \right )=\frac{(ad-bc)\sqrt{\vert D \vert}}{2\vert c+d\tau \vert}>0,\] 
 and where $\tau=\frac{1+\sqrt{D}}{2}$. Then
 \[\Dcal(\bfrak)=\begin{vmatrix} a+b\tau & c+d\tau \\[0.2cm] \overline{a+b\tau} & \overline{c+d\tau} \end{vmatrix}=\begin{vmatrix} a & b \\ c & d \end{vmatrix} \begin{vmatrix} 1 & 1 \\ \tau & \overline{\tau} \end{vmatrix}=-\sqrt{D}(ad-bc).\] Since the determinant $ad-bc$ is positive, we have
 \[ad-bc=\vert ad-bc \vert=\N(\bfrak).\]
\end{proof}

\begin{prop} \label{resteis}The restriction of the Eisenstein forms $\Ehat_{\chi,\afrak}$ to the boundary components are
    \[\res_r(\Ehat_{\chi,\afrak})= \chi(\afrak_c\afrak^{-1})\left (G_2(\afrak\afrak^{-1}_c)\omega_{\chi,r}- G_2(\afrak\bar{\afrak}_c^{-1}) \bar{\omega}_{\chi,r}\right ),\]
    where $c=[r] \in C_\Gamma$. For the Eisenstein series $E_{\chi,\afrak}$, the restriction is
    \[\res_r(E_{\chi,\afrak})=\delta_{\afrak,\afrak_c}\omega_{\chi,r}-\delta_{\afrak, \bar{\afrak}_c}\bar{\omega}_{\chi,r},\]
    where $\delta_{\ufrak,\vfrak}$ is defined by
    \begin{align*}
        \delta_{\ufrak,\vfrak} =  \begin{cases}
            1 & \textrm{if} \quad [\ufrak]=[\vfrak]  \\ 0 & \textrm{otherwise}.
        \end{cases}
    \end{align*}
    In particular, the restriction of $E_{\chi,\afrak}$ to the boundary is integral.
\end{prop}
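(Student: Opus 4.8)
The plan is to read off both restriction formulas from the Fourier expansions established in the preceding subsection, using the principle that restricting a $\Gamma$-invariant $1$-form on $\HH_3$ to a boundary component extracts precisely its constant (zeroth Fourier) term at the corresponding cusp. Concretely, fix a cusp $c=[r]$, a matrix $M\in\SL_2(K)$ with $Mr=\8$, and put $N=M^{-1}$; then $\iota_{M,v}^{\ast}=M^{\ast}\circ\iota_v^{\ast}\circ N^{\ast}$, so $\res_r(\omega)=\lim_{v\to\8}\iota_{M,v}^{\ast}\omega=M^{\ast}\bigl(\lim_{v\to\8}\iota_v^{\ast}(N^{\ast}\omega)\bigr)$. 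The non-constant Fourier modes of $N^{\ast}\omega$ along the horocycle carry $K$-Bessel factors $K_{\nu}(4\pi\vert cd\vert v)$, times a polynomial in $v$, that vanish as $v\to\8$; hence the inner limit equals $a_0\,dz+b_0\,d\bar{z}$, where $a_0$ and $b_0$ are the constant terms of the $z$- and $\bar{z}$-components of $N^{\ast}\omega$, and $\res_r(\omega)$ is this form pulled back by $M$ to $\Gamma_r\backslash\Hcal_r$. In particular at $r=\8$, where $M=N=\id$ and $\afrak_c=\Ocal$, the constant terms of $\Ehat_{\chi,\afrak,z}$ and $\Ehat_{\chi,\afrak,\bar{z}}$ recorded above give at once $\res_{\8}(\Ehat_{\chi,\afrak})=\chi(\afrak)^{-1}\bigl(G_2(\afrak)\,dz-G_2(\afrak)\,d\bar{z}\bigr)$, the asserted formula in this case.

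For a general cusp I would feed in the two constant terms computed in the ``other cusps'' paragraph, namely $a_0=\chi(\afrak)^{-1}G_2(\afrak\afrak_M^{-1})$ and $b_0=-\chi(\afrak)^{-1}\N(\afrak_M)^2G_2(\afrak\afrak_M)$, whose derivation used Lemma~\ref{lemL0}, the functional equation \eqref{funceq2}, and the identity $\Dcal(\bfrak)=-\sqrt{D}\,\N(\bfrak)$. Pulling $a_0\,dz+b_0\,d\bar{z}$ back by $M$, and using that by the very definition of $\omega_{\chi,r}$ and $\bar{\omega}_{\chi,r}$ in Lemma~\ref{boundforms} the map $M^{\ast}$ carries $dz$ and $d\bar{z}$ to $\chi(\afrak_M)\omega_{\chi,r}$ and $\chi(\afrak_M)\bar{\omega}_{\chi,r}$, one obtains
\[\res_r(\Ehat_{\chi,\afrak})=\chi(\afrak_M\afrak^{-1})\bigl(G_2(\afrak\afrak_M^{-1})\,\omega_{\chi,r}-\N(\afrak_M)^2G_2(\afrak\afrak_M)\,\bar{\omega}_{\chi,r}\bigr).\]
It then remains to rewrite this in the stated form: since $[\afrak_M]=[\afrak_c]$ (one may in fact take $\afrak_M=\afrak_c$), since $\afrak_c\bar{\afrak}_c=\N(\afrak_c)\Ocal$ gives $G_2(\afrak\bar{\afrak}_c^{-1})=\N(\afrak_c)^2G_2(\afrak\afrak_c)$, and since by the homogeneity of $G_2$ and of $\chi$ together with Lemma~\ref{boundforms} the expression is independent of these choices, the displayed formula becomes exactly $\res_r(\Ehat_{\chi,\afrak})=\chi(\afrak_c\afrak^{-1})\bigl(G_2(\afrak\afrak_c^{-1})\omega_{\chi,r}-G_2(\afrak\bar{\afrak}_c^{-1})\bar{\omega}_{\chi,r}\bigr)$.

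For $E_{\chi,\afrak}$ I would deduce the formula from Proposition~\ref{continuation}, which writes $\Ehat_{\chi,\afrak}=\sum_{i=1}^{h}\tfrac{\chi(\afrak_i\afrak^{-1})}{w(\afrak_i^{-1}\afrak)}G_2(\afrak_i^{-1}\afrak)\,E_{\chi,\afrak_i}$ for each of the $h$ classes. Applying $\res_r$ and inserting the formula just obtained on the left gives a linear system for the $\res_r(E_{\chi,\afrak_i})$ whose transition matrix is invertible, its determinant being, up to a unit of $\Ocal_F$, the nonzero value $L^{\alg}(\chi\circ\N_{H/K},0)$ of Proposition~\ref{Lalgint}; the system therefore has a unique solution, and a direct substitution checks that $\res_r(E_{\chi,\afrak})=\delta_{\afrak,\afrak_c}\omega_{\chi,r}-\delta_{\afrak,\bar{\afrak}_c}\bar{\omega}_{\chi,r}$ is it, only the classes $[\afrak_c]$ and $[\bar{\afrak}_c]$ surviving on the right and matching the two terms. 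Equivalently one may compute the constant term of $E_{\chi,\afrak}$ directly: after conjugation by $N$ it arises from the pairs $(c,d)$ with $cm+dn=0$ and $(c)+(d)=\afrak_M$, which exist precisely when $\afrak\afrak_M^{-1}$ is principal, i.e.\ $[\afrak]=[\afrak_c]$, producing the Kronecker delta. The final sentence of the proposition is then immediate, since $\omega_{\chi,r},\bar{\omega}_{\chi,r}\in\cohomt^1(\Gamma_r\backslash\Hcal_r,\Ocal_F)$ by Lemma~\ref{boundforms}, so the same holds for $\res_r(E_{\chi,\afrak})$.

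I expect the main obstacle to be the bookkeeping rather than any new idea: keeping track of the conjugate ideal $\bar{\afrak}_c$, the ideal norms, the $\chi$-values and the unit factors $w(\cdot)$ so that every displayed expression is visibly independent of the auxiliary matrix $M$ and of the chosen ideal representatives; and remembering that $\res_r$ produces the \emph{Poisson-summed} constant term, so that the Fourier-expansion computations of the preceding subsection are genuinely needed — especially for the $\bar{z}$-component, whose constant term is a pure scattering contribution, invisible in the naive $c=0$ part of the defining series.
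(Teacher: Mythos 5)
Your proposal is correct and follows essentially the same route as the paper: read off the constant Fourier coefficients of $N^{\ast}\Ehat_{\chi,\afrak}$ at each cusp (as computed in the preceding subsection), pull back by $M$ to land on $\chi(\afrak_c\afrak^{-1})\bigl(G_2(\afrak\afrak_c^{-1})\omega_{\chi,r}-G_2(\afrak\bar{\afrak}_c^{-1})\bar{\omega}_{\chi,r}\bigr)$, and then invert the relation of Proposition \ref{continuation} to extract $\res_r(E_{\chi,\afrak})$. Your only (welcome) refinement is to justify the coefficient comparison by noting the transition matrix is invertible with determinant the nonzero $L$-value, where the paper simply compares coefficients.
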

\begin{proof} Let $M$ be any matrix in $\SL_2(K)$ sending  $r$ to $\8$, and $N=M^{-1}$. By \eqref{equivariance} we have
    \begin{align}
    \res_r(\Ehat_{\chi,\afrak})=M^\ast\res_\8(N^\ast \Ehat_{\chi,\afrak}).
    \end{align}
    where $ \res_{\8}(\omega)=\lim_{v \rightarrow \8} \iota_v^\ast \omega.$  After pulling back $N^\ast \Ehat_{\chi,\afrak}(u)$ by the map $\iota_v(z)=z+jv$ we get
    \begin{align}
      \iota_v^{\ast} N^\ast \Ehat_{\chi,\afrak}(u)=\Ehat^{(N)}_z(u)dz+\Ehat^{(N)}_{\bar{z}}(u)d\bar{z}.
      \end{align}
The limit of $\Ehat^{(N)}_z(u)$ as $v$ goes to $\8$ is the constant term in the Fourier expansion. Hence, we have
\begin{align}
    \lim_{v \rightarrow \8} \Ehat^{(N)}_z(u) & =\chi(\afrak)^{-1} G_2( \afrak\afrak_M^{-1}).
\end{align}
Similarly 
\begin{align} \label{limit1}
    \lim_{v \rightarrow \8} \Ehat^{(N)}_{\bar{z}}(u) & =-\chi(\afrak)^{-1} \N(\afrak_M)^2G_2(\afrak\afrak_M).
\end{align}
Since for any fractional $\cfrak$ ideal we have $\cfrak=\N(\cfrak)\bar{\cfrak}^{-1}$, by the homogeneity of $G_2$ we get
\begin{align}
    \lim_{v \rightarrow \8} \Ehat^{(N)}_{\bar{z}}(u) & =-\chi(\afrak)^{-1}G_2(\afrak \bar{\afrak}_M^{-1}).
\end{align}
and
\begin{align}
    \res_{\8} N^\ast \Ehat_{\chi,\afrak}=\chi(\afrak)^{-1}\left (G_2(\afrak\afrak^{-1}_M)dz- G_2(\afrak\bar{\afrak}_M^{-1}) d\bar{z}\right ).
\end{align}
After pulling back by $M$, and using that $[\afrak_M]=[\afrak_c]$ we get
\begin{align} \label{equationres}
    \res_{r} \Ehat_{\chi,\afrak}=\chi(\afrak_c\afrak^{-1})\left (G_2(\afrak\afrak^{-1}_c)\omega_{\chi,r}- G_2(\afrak\bar{\afrak}_c^{-1}) \bar{\omega}_{\chi,r}\right ).
\end{align}
Note that the right hand side of \eqref{equationres} does not depend on the ideal representatives $\afrak_c$. \\ \\
\indent Let $\res_r(E_{\chi,\afrak})=\alpha(\afrak)\omega_{\chi,r}+\beta(\afrak)\bar{\omega}_{\chi,r}$ be the restriction of $E_{\chi,\afrak}$ to the boundary component $r$. Since the restriction map is linear, it follows from Lemma \ref{eisensteinrelation} that
\begin{align}
    \res_r(\Ehat_{\chi,\afrak}) & = \sum_{i=1}^h \frac{\chi(\afrak_i\afrak^{-1})}{w(\afrak_i^{-1}\afrak)}G_2(\afrak_i^{-1}\afrak)\res_r(E_{\chi, \afrak_i}) \nonumber \\
    & = \left ( \sum_{i=1}^h \frac{\chi(\afrak_i\afrak^{-1})}{w(\afrak_i^{-1}\afrak)}G_2(\afrak_i^{-1}\afrak) \alpha(\afrak_i) \right )\omega_{\chi,r}+\left ( \sum_{i=1}^h \frac{\chi(\afrak_i\afrak^{-1})}{w(\afrak_i^{-1}\afrak)}G_2(\afrak_i^{-1}\afrak) \beta(\afrak_i)\right )\bar{\omega}_{\chi,r} .
\end{align}
Comparing with \eqref{equationres} we see that
\begin{align}
        \alpha(\afrak_i) =  \begin{cases}
            1 & \textrm{if} \quad [\afrak_i]=[\afrak_c]  \\ 0 & \textrm{otherwise}
        \end{cases}
    \end{align}
and
\begin{align}
        \beta(\afrak_i) =  \begin{cases}
            -1 & \textrm{if} \quad [\afrak_i]=[\bar{\afrak}_c]  \\ 0 & \textrm{otherwise}.
        \end{cases}
    \end{align}
Thus $\alpha(\afrak)=\delta_{\afrak,\afrak_c}$ and $\beta(\afrak)=-\delta_{\afrak,\bar{\afrak}_c}$.
\end{proof}
\begin{rmk}
    We will not discuss this further in the rest of the paper, but let us mention that the fact that the Eisenstein classes have an integral restriction to the boundary is an important property to find congruences. See \cite{berger_2009} for more on the relation between the denominator ideal, congruences and the Selmer group.
\end{rmk}

\begin{prop}
    The forms $\{ E_{\chi,\afrak}\}_{[\afrak] \in \Cl(K)}$ span the Eisenstein cohomology $H^1_{\Eis}(Y_\Gamma;\C)$.
\end{prop}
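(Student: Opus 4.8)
The plan is to realize the family $\{E_{\chi,\afrak}\}_{[\afrak]\in\Cl(K)}$, which has $h$ members, as a linearly independent subset of the $h$-dimensional space $H^1_{\Eis}(Y_\Gamma;\C)$; a dimension count then forces it to be a basis. That each $E_{\chi,\afrak}$ lies in $H^1_{\Eis}(Y_\Gamma;\C)$ is immediate, since $E_{\chi,c}=\Eis(\omega_{\chi,r})$ for $c=[r]$ and $H^1_{\Eis}(Y_\Gamma;\C)$ is by definition the image of $\Eis$; moreover $E_{\chi,\afrak}$ depends only on the ideal class $[\afrak]$, as already observed.

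For linear independence I would pair a hypothetical relation $\sum_{[\afrak]\in\Cl(K)}\lambda_\afrak E_{\chi,\afrak}=0$ in $H^1(Y_\Gamma;\C)$ with the boundary. Fixing a cusp $c=[r]$ and applying $\res_r$, Proposition~\ref{resteis} gives $\res_r(E_{\chi,\afrak})=\delta_{\afrak,\afrak_c}\,\omega_{\chi,r}-\delta_{\afrak,\bar{\afrak}_c}\,\bar{\omega}_{\chi,r}$, so the relation collapses to
\[
\lambda_{[\afrak_c]}\,\omega_{\chi,r}-\lambda_{[\bar{\afrak}_c]}\,\bar{\omega}_{\chi,r}=0\qquad\text{in } H^1(\Gamma_r\backslash\Hcal_r;\C).
\]
Under the diffeomorphism $\phi_{M,r}\colon\Gamma_r\backslash\Hcal_r\to\C/\afrak_M^{-2}$ of Lemma~\ref{boundforms}, the classes $\omega_{\chi,r}$ and $\bar{\omega}_{\chi,r}$ are nonzero multiples of the pullbacks of $dz$ and $d\bar z$, which span $H^1$ of a real $2$-torus and are in particular $\C$-linearly independent; hence $\lambda_{[\afrak_c]}=0$. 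Letting $c$ range over all cusps and using the bijection~\eqref{clbij} between $C_\Gamma$ and $\Cl(K)$, every $\lambda_\afrak$ vanishes.

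It then remains to know $\dim_\C H^1_{\Eis}(Y_\Gamma;\C)=h$. This follows from the splitting~\eqref{splitting}, $H^1(Y_\Gamma;\C)=H^1_!(Y_\Gamma;\C)\oplus H^1_{\Eis}(Y_\Gamma;\C)$ with $H^1_!=\ker(\res)$: by rank--nullity, $\dim H^1_{\Eis}=\dim H^1(Y_\Gamma;\C)-\dim\ker(\res)=\dim\im(\res)$, and the latter is $h$ by Proposition~\ref{imres}. Hence $\Span_\C\{E_{\chi,\afrak}\}_{[\afrak]\in\Cl(K)}$ is an $h$-dimensional subspace of the $h$-dimensional space $H^1_{\Eis}(Y_\Gamma;\C)$, so the two coincide.

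The only step that is not linear algebra on the boundary tori is the equality $\dim H^1_{\Eis}=h$, which rests on~\eqref{splitting} and Proposition~\ref{imres}, and is the conceptual crux. A slightly more hands-on variant: from Proposition~\ref{resteis} together with the equivariance of $\iota$ (notably~\eqref{involution2}) one checks $\res(\iota^\ast E_{\chi,\afrak})=-\res(E_{\chi,\afrak})$, so $E_{\chi,\afrak}+\iota^\ast E_{\chi,\afrak}$ lies in $\ker(\res)=H^1_!$ while also lying in $\im(\Eis)$, hence vanishes by~\eqref{splitting}; since $\im(\Eis)$ is visibly spanned by the $E_{\chi,c}$ and the $\iota^\ast E_{\chi,c}$, this again yields the claim, still through~\eqref{splitting}.
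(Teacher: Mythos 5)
Your proof is correct and follows essentially the same route as the paper's: both arguments rest on the explicit boundary restrictions of Proposition~\ref{resteis} together with the dimension count $\dim H^1_{\Eis}=\dim\im(\res)=h$ from Proposition~\ref{imres}. The only difference is cosmetic — you phrase the key step as linear independence of the $\res_r(E_{\chi,\afrak})$ (so you only need $\dim\im(\res)=h$, not its identification as the $(-1)$-eigenspace), while the paper phrases it as surjectivity of $\res$ from the span onto $\im(\res)$; both amount to the same rank computation.
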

\begin{proof}
Let $V$ be the subspace of the Eisenstein cohomology spanned by the forms $E_{\chi,\afrak}$. We know that the Eisenstein cohomology is $h$-dimensional, so $\dim(V)\leq h$. A class in $H^1(\partial X_\Gamma;\C)$ is represented by a collection of forms $\alpha_r\omega_{\chi,r}+\beta_r\bar{\omega}_{\chi,r}$ with $\alpha_r,\beta_r$ two complex numbers such that $\alpha_{\gamma r}=\alpha_r$ and $\beta_{\gamma r}=\beta_r$. By Proposition \ref{imres}, the image of the restriction is the $-1$ eigenspace by the involution $\iota$. Hence if the class lies in $\im(\Res)$, then we have in addition $\beta_r=-\alpha_{\bar{r}}$, by \eqref{involution2}. It follows that the space $\im(\Res)$ is spanned by the forms $\omega_{\chi,r}-\bar{\omega}_{\chi,\bar{r}}$. By the previous proposition, we have
\begin{align}
    \Res(E_{\chi,\afrak_c})=\omega_{\chi,r}-\bar{\omega}_{\chi,\bar{r}},
\end{align}
Hence the restriction map $\Res \colon V \longrightarrow \im(\Res)$ is surjective and $\dim(V)=h$.
\end{proof}

\subsection{Relation to the Sczech cocycle} \label{Sczechrel}

The forms $\Ehat_{\chi,\afrak}$ in $\Omega^1(Y_\Gamma;\C)$ define a cocycle in $H^1(\Gamma;\C)$ by
\begin{align}
\Ehat_{\chi,\afrak}(\gamma) \coloneqq \int_{u_0}^{\gamma u_0} \Ehat_{\chi,\afrak}.
\end{align}
Since the form is closed, the integral does not depend on the path from $u_0$ to $\gamma u_0$. The following result shows that this cocycle is Sczech's cocycle.

\begin{thm} \label{eisensteinszcech} We have\begin{align}
   \Ehat_{\chi,\afrak}\begin{pmatrix}
    a & b \\ c & d
    \end{pmatrix} = \chi(\afrak)^{-1}
    \begin{cases}
    I \left ( \frac{a+d}{c} \right )G_2(\afrak)-D(a,c,\afrak) & \textrm{if} \quad c \neq 0, \nonumber \\[0.3cm]
   I \left ( \frac{b}{d} \right )G_2(\afrak) & \textrm{if} \quad c = 0
    \end{cases}
\end{align} In particular, we have $\chi(\afrak)\Ehat_{\chi,\afrak}(\gamma)=\Phi_\afrak(\gamma)$.
\end{thm}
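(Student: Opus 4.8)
The plan is to execute the geometric argument outlined in the introduction: interpret the cocycle value $\Ehat_{\chi,\afrak}(\gamma)=\int_{u_0}^{\gamma u_0}\Ehat_{\chi,\afrak}$ as an integral over a modular symbol joining two cusps plus a boundary term, exploiting that (by Propositions \ref{continuation} and \ref{resteis}) $\Ehat_{\chi,\afrak}$ is a closed $\Gamma$-invariant form on $\HH_3^{\ast}$, and then identify the modular symbol integral with Sczech's Dedekind sum following \cite[Theorem.~3]{ito}. Both sides are $\C$-valued $1$-cocycles on $\Gamma$ — the left-hand side is in fact a homomorphism since $\Ehat_{\chi,\afrak}$ is closed — so it suffices to fix $\gamma=\begin{pmatrix} a & b \\ c & d\end{pmatrix}$, and, pulling out the scalar $\chi(\afrak)^{-1}$, to match the result with $\Phi_\afrak(\gamma)$.

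\emph{Case $c=0$.} Then $ad=1$ with $a,d\in\Ocal$ forces $a=d=\pm1$, and $\gamma$ stabilises the cusp $\8$, acting on $\HH_3$ as the translation $u\mapsto u+b/d$. Taking $u_0=jv_0$, the loop $[u_0,\gamma u_0]$ is homotopic in $X_\Gamma$ to the cycle in $\Gamma_\8\backslash\Hcal_\8\simeq\C/\Ocal$ represented by $b/d\in\Ocal$; letting $v_0\to\8$ and using closedness gives $\int_{u_0}^{\gamma u_0}\Ehat_{\chi,\afrak}=\int_{[0,\,b/d]}\res_\8(\Ehat_{\chi,\afrak})$. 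By Proposition \ref{resteis} applied with $r=\8$ (so $\afrak_c=\bar{\afrak}_c=\Ocal$, $\omega_{\chi,\8}=dz$, $\bar{\omega}_{\chi,\8}=d\bar{z}$) the restriction is $\chi(\afrak)^{-1}G_2(\afrak)(dz-d\bar{z})=\chi(\afrak)^{-1}G_2(\afrak)\,d(z-\bar{z})$, whose integral from $0$ to $b/d$ is $\chi(\afrak)^{-1}I(b/d)G_2(\afrak)$, as asserted.

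\emph{Case $c\neq0$.} Here $\gamma\in\SL_2(\Ocal)$ gives $(c)+(d)=\Ocal$, so both cusps $\gamma^{-1}\8=-d/c$ and $\gamma\8=a/c$ lie in the $\Gamma$-orbit of $\8$. Using the elementary path relations of Proposition \ref{surjectivegamma} (which remain valid for paths in $X_\Gamma$ ending at the cusps) together with $\gamma^{\ast}\Ehat_{\chi,\afrak}=\Ehat_{\chi,\afrak}$, I would show that in $H_1(X_\Gamma)$ the class $[u_0,\gamma u_0]$ decomposes as the modular symbol $[\gamma^{-1}\8,\8]$ — the vertical geodesic $\{-d/c+jv:v>0\}$ — together with an arc $\delta$ in the boundary component $\Gamma_\8\backslash\Hcal_\8\simeq\C/\Ocal$ joining its two endpoints, namely $a/c+\Ocal$ (the end $v\to0$, carried over by $\gamma$) and $-d/c+\Ocal$ (the end $v\to\8$); see Figure \ref{movingtoboundarya}. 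Thus $\int_{[u_0,\gamma u_0]}\Ehat_{\chi,\afrak}=\int_{[\gamma^{-1}\8,\8]}\Ehat_{\chi,\afrak}+\int_{\delta}\res_\8(\Ehat_{\chi,\afrak})$, and, exactly as in the case $c=0$, integrating $\chi(\afrak)^{-1}G_2(\afrak)\,d(z-\bar{z})$ along $\delta$ yields the cusp term $\chi(\afrak)^{-1}\big(I(a/c)+I(d/c)\big)G_2(\afrak)=\chi(\afrak)^{-1}I\!\big(\tfrac{a+d}{c}\big)G_2(\afrak)$.

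It remains to compute the modular symbol integral $\int_{[\gamma^{-1}\8,\8]}\Ehat_{\chi,\afrak}$. Along the vertical geodesic $\{-d/c+jv\}$ the coordinate $z$ is constant, so only the $dv$-component contributes and this integral equals $\int_0^\8\Ehat_{\chi,\afrak,v}(-d/c+jv)\,dv=\chi(\afrak)^{-1}\sum_{(c',d')\in\afrak\times\afrak}\int_0^\8\eta(-d/c+jv,c',d')_v\,dv$ (convergent after grouping; one may also insert the regularising factor $v^{s}$ and let $s\to0$, the form $\Ehat_{\chi,\afrak}(u,s)$ being $\Gamma$-invariant for every $s$). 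The claim is that this is $-\chi(\afrak)^{-1}D(a,c,\afrak)$; this is Ito's computation \cite[Theorem.~3]{ito} (see also \cite[Bemerkung.~2, p.~116]{weselmann}), carried out by grouping the sum over $(c',d')$ according to the line through the origin it lies on, unfolding against the cusp $\gamma^{-1}\8$, integrating in $v$, and applying Poisson summation to recognise the Kronecker--Eisenstein values $G_1(ar/c,\afrak)$ and $G_1(r/c,\afrak)$, hence $D(a,c,\afrak)=\tfrac1c\sum_{r\in\afrak/c\afrak}G_1(\tfrac{ar}{c},\afrak)G_1(\tfrac{r}{c},\afrak)$. The main obstacle is this last step: justifying the conditionally convergent rearrangement and the Poisson summation, controlling the $s\to0$ limit if the regularisation is used, and matching the resulting finite sum with Sczech's definition of the Dedekind sum; on the geometric side the delicate point is pinning down the orientations in $[u_0,\gamma u_0]\sim[\gamma^{-1}\8,\8]+\delta$ so that $\delta$ supplies exactly $+\,I(\tfrac{a+d}{c})G_2(\afrak)$ and the geodesic exactly $-D(a,c,\afrak)$.
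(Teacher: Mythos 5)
Your proposal follows essentially the same route as the paper: move the path $[u_0,\gamma u_0]$ to the boundary, split it into a vertical geodesic joining two cusps plus an arc in the horosphere at $\8$, evaluate the arc integral via the constant term $\chi(\afrak)^{-1}G_2(\afrak)(dz-d\bar z)$ from Proposition \ref{resteis} to get $I(\frac{a+d}{c})G_2(\afrak)$, and identify the geodesic integral with the Dedekind sum; the $c=0$ case is handled identically. The one place you are thinner than the paper is precisely the step you flag as the ``main obstacle'': in the paper this is not done by Poisson summation or a conditionally convergent rearrangement, but by a direct computation on the truncated geodesic $[\gamma u_3,\gamma u_4]$ at $\re(s)\gg 0$, where the sum converges absolutely — the $v$-integral is evaluated as a Beta function $B(1+\frac{s}{2},1+\frac{s}{2})$, the lattice sum is reindexed by writing $m=c\widetilde{m}+r$ with $r\in\afrak/c\afrak$ and replacing $(\widetilde{m},n)$ by $(\widetilde{m},a\widetilde{m}+n)$, which factors it into $\frac{1}{c}\sum_r G(\frac{1+s}{2},1,\frac{ar}{c},0,\afrak)\,G(\frac{1+s}{2},1,\frac{r}{c},0,\afrak)$, and setting $s=0$ gives exactly Sczech's $D(a,c,\afrak)$. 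So the analytic delicacy you anticipate largely evaporates once the $s$-regularisation is kept throughout; the orientation bookkeeping you worry about is handled in the paper by applying Stokes to an explicit closed contour (Figures \ref{movingtoboundarya}--\ref{movingtoboundaryb}) rather than by a homology identity, which makes the signs automatic.
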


\begin{proof}
Let \[\gamma=\begin{pmatrix}
a & b \\ c & d
\end{pmatrix} \in \Gamma.\]
\begin{enumerate} [wide, labelindent=0pt]
\item Suppose that $c \neq 0$. For a real number $\epsilon>0$ consider the closed path $P_\epsilon(\gamma)$ in $\HH_3$ pictured in Figure \ref{movingtoboundarya}. Let us denote by $[u_i,u_j]$ the oriented segment from $u_i$ to $u_j$.\begin{figure}[h!]
\centering
\captionsetup{width=.75\textwidth,font={normalsize,it}}
\includegraphics[scale=0.5]{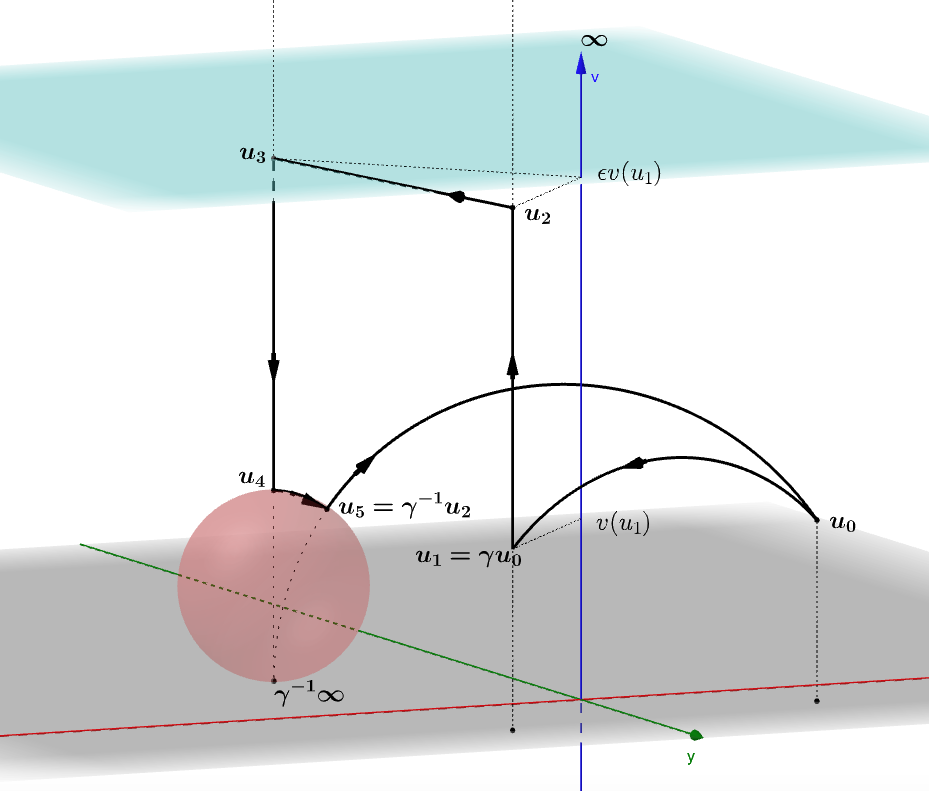}
\caption{The path $P_\epsilon(\gamma)$ when $c\neq0$. The blue axis is the $v$-axis of $u=z+jv$, whereas the green (resp. red) axis is the $x$-axis ( resp $y$-axis). We start with the geodesic segment joining $u_0$ to $u_1=\gamma u_0$. We then consider the geodesic segment from $u_1=z(v_1)+jv(u_1)$ to $u_2=z(v_1)+j\epsilon v(u_1)$ for some $\epsilon >0$, along the vertical geodesic line through $u_1$. We denote by $\Hcal_{\8,\epsilon v(u_1)}$ the blue horosphere parallel to the $xy$-axis and containing $u_2$. We consider the cusp $\gamma^{-1}\8=-\frac{d}{c}$. The red horosphere tangent to $\gamma^{-1}\8$ is $\gamma^{-1}\Hcal_{\8,\epsilon v(u_1)}$.The points $u_3$ and $u_4$ are the intersection points between the vertical geodesics and the two horospheres. Note that $\gamma[u_5,u_0]=-[u_1,u_2]$.}
\label{movingtoboundarya}
\end{figure}
\begin{figure}[h!]
\centering
\includegraphics[scale=0.4]{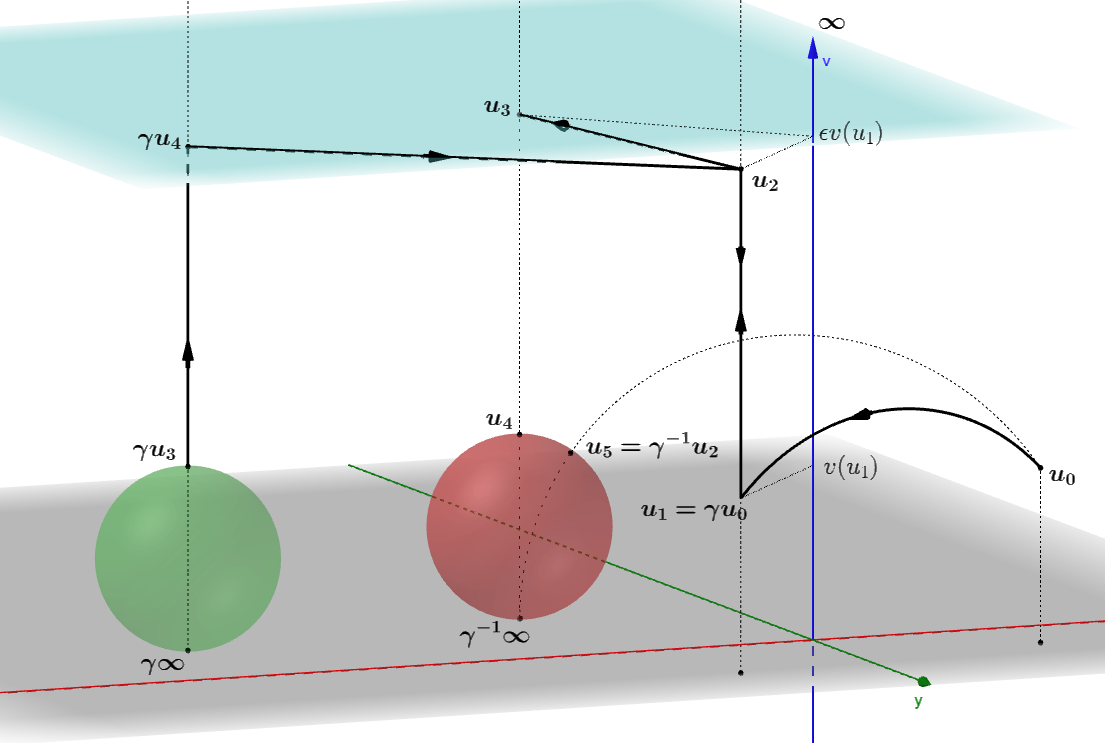}
\captionsetup{width=.75\textwidth,font={normalsize,it}}
\caption{The path $\widetilde{P}_\epsilon(\gamma)$ is $\Gamma$-equivalent to the path $P_\epsilon(\gamma)$ in Figure \ref{movingtoboundarya}. Note that $\gamma[u_5,u_0]=-[u_1,u_2]$. Hence we can translate $[u_5,u_0]$ by $\gamma$ and get twice the same geodesic segment between $u_1$ and $u_2$ with opposite orientations. Then, we translate by $\gamma$ the geodesic between $\8$ and $\gamma^{-1}\8$ containing $u_3$ and $u_4$ to the geodesic between $\gamma \8$ and $\8$. The segment $[u_3,u_4]$ is translated to $[\gamma u_3,\gamma u_4]$. The segment $[u_4,u_5]$ contained in the red horosphere tangent to $\gamma^{-1}\8$ is sent to the segment $[\gamma u_4,u_2]$ contained in the blue horosphere containing $u_2$ and $u_3$.}
\label{movingtoboundaryb}
\end{figure}Since the path and the form are closed, by Stoke's theorem we have
\begin{align}
    \int_{P_\epsilon(\gamma)} \Ehat_{\chi,\afrak}=0
\end{align}
for every $\epsilon>0$, and we will take the limit $\epsilon \rightarrow \8$. The path $P_\epsilon(\gamma)$ can be translated by $\Gamma$ to the path $\widetilde{P}_\epsilon(\gamma)$ in Figure \ref{movingtoboundaryb}. Since the form is $\Gamma$-invariant, we have
\begin{align}
    \int_{\widetilde{P}_\epsilon(\gamma)}\Ehat_{\chi,\afrak}=\int_{P_\epsilon(\gamma)} \Ehat_{\chi,\afrak}=0.
\end{align}
Moreover, since the two integrals along $[u_1,u_2]$ and $[u_2,u_1]$ cancel, it follows from Figure \ref{movingtoboundaryb} that 
\begin{align} 
    0 & = \int_{u_0}^{\gamma u_0} \Ehat_{\chi,\afrak} + \int_{\gamma u_3}^{\gamma u_4} \Ehat_{\chi,\afrak}  + \int_{\gamma u_4}^{u_2} \Ehat_{\chi,\afrak} + \int_{u_2}^{u_3} \Ehat_{\chi,\afrak} \nonumber \\
    & = \int_{u_0}^{\gamma u_0} \Ehat_{\chi,\afrak}+ \int_{\gamma u_3}^{\gamma u_4} \Ehat_{\chi,\afrak}  + \int_{\gamma u_4}^{u_3} \Ehat_{\chi,\afrak}.
\end{align}
Note that the endpoints $u_3,u_4,\gamma u_3$ and $\gamma u_4$ depend on $\epsilon$, and that the equality above holds for any $\epsilon >0$. Taking the limit we get
\begin{align} \label{twointegrals}
    \int_{u_0}^{\gamma u_0} \Ehat_{\chi,\afrak}= -\lim_{\epsilon \rightarrow \8} \int_{\gamma u_3}^{\gamma u_4} \Ehat_{\chi,\afrak}  -\lim_{\epsilon \rightarrow \8} \int_{\gamma u_4}^{u_3} \Ehat_{\chi,\afrak}
\end{align}We will compute these two integrals separately.

\begin{enumerate}[label=\theenumi\alph*), wide, labelindent=0pt]
    \item[$\bullet$] We begin with the integrals inside the blue horosphere at the cusp $\8$. By definition of the restriction $\Ehat_{\chi,\afrak}^{(\8)}$ of $\Ehat_{\chi,\afrak}$ to the boundary at $\8$, we have
    \begin{align} \label{limint}
        \lim_{\epsilon \rightarrow \8} \int_{\gamma u_4}^{u_3}\Ehat_{\chi,\afrak} = \int_{z}\Ehat_{\chi,\afrak}^{(\8)}
    \end{align}
    where $z$ is the image in the horosphere $\Hcal_\8 \simeq \C$ at $\8$ of the segment $[\gamma u_4,u_3]$. Moreover, by Proposition \ref{resteis} the restriction at the cusp $c=\8$ is  \begin{align}
    \Ehat_{\chi,\afrak}^{(\8)}=\chi(\afrak^{-1})G_2(\afrak)(dz-d\bar{z}).    
    \end{align}Let us parametrise the segment $z$ more precisely. We have $\gamma \8=\frac{a}{c}$ and $\gamma^{-1} \8=-\frac{d}{c}$. Hence we have $\gamma u_4=\frac{a}{c}+\epsilon v(u_1)$ and $u_3=-\frac{d}{c}+\epsilon v(u_1)$. Hence, inside the horosphere, the cycle $z$ is the segment $z(t)=\frac{a}{c}-t\frac{a+d}{c}$ with $0 \leq t \leq 1$. Thus, the integral is
\begin{align} \label{integral1}
   \int_{z}\Ehat^{(\8)}_{\chi,\afrak} & =\chi(\afrak)^{-1}\int_0^1 \left (G_2(\afrak)  z(t)-G_2(\afrak)\bar{z}(t) \right )dt \nonumber \\
    & = -\chi(\afrak)^{-1} \left ( G_2(\afrak) \frac{a+d}{c} -G_2(\afrak) \overline{\left ( \frac{a+d}{c}\right )} \right ) \nonumber \\
    & = -\chi(\afrak)^{-1}  G_2(\afrak) I\left (\frac{a+d}{c} \right ).
\end{align}
\item[$\bullet$] Let us now compute the integral along $[\gamma u_3,\gamma u_4]$. The path $[\gamma u_3,\gamma u_4]$ in Figure \ref{movingtoboundaryb} is parametrized by $u(t)=\frac{a}{c}+jt$ with $d(\epsilon)<t<\epsilon v(u_1)$, where $d(\epsilon)=v(\gamma u_3)$ is the diameter of the green horosphere tangent at $\gamma \8$. Using \eqref{gammav}, one can see that
$d(\epsilon)=\frac{1}{\vert c \vert^2 \epsilon v(u_1)}$. In particular, we have $d(\epsilon) \rightarrow 0$ as $\epsilon$. We can compute the integral 
\begin{align}
    \int_{\gamma u_3}^{\gamma u_4} \Ehat_{\chi,\afrak}(u,s) & =  \int_{\frac{1}{\vert c \vert^2 \epsilon v(u_1)}}^{\epsilon v(u_1)} \Ehat_{\chi,\afrak,v}(u,s) \nonumber \\
    & = 2\chi(\afrak)^{-1}\N(\afrak)^s \sum_{(m,n)\in \afrak \times \afrak} \int_{\frac{1}{\vert c \vert^2 \epsilon v(u_1)}}^{\epsilon v(u_1)} \frac{\left ( \overline{m\frac{a}{c}+n}\right )\overline{m}t^{1+s}}{\left (\vert m\frac{a}{c}+n \vert^2+\vert m \vert^2 t^2 \right )^{2+s}}dt \nonumber \\
    & = 2\chi(\afrak)^{-1}\N(\afrak)^s \sum_{(m,n)\in \afrak \times \afrak} \frac{\left ( \overline{m\frac{a}{c}+n}\right )\overline{m}}{\vert m\frac{a}{c}+n \vert^{2(2+s)}} \int_{\frac{1}{\vert c \vert^2 \epsilon v(u_1)}}^{\epsilon v(u_1)} \frac{t^{1+s}}{\left (1+\frac{\vert m \vert^2}{\vert m\frac{a}{c}+n \vert^2} t^2 \right )^{2+s}}dt.
\end{align}
By substituting $\alpha=\frac{\vert m \vert}{\vert m\frac{a}{c}+n \vert} t$ and taking the limit as $\epsilon$ goes to $\8$, this becomes
\begin{align} \label{sum3142}
    & 2\chi(\afrak)^{-1}\N(\afrak)^s \sum_{(m,n)\in \afrak \times \afrak} \frac{\left ( \overline{m\frac{a}{c}+n}\right )}{\vert m\frac{a}{c}+n \vert^{2+s}} \frac{\overline{m}}{\vert m \vert^{2+s}} \int_0^\8 \frac{\alpha^{1+s}}{\left (1+\alpha^2 \right )^{2+s}}d\alpha \nonumber \\
    & = \chi(\afrak)^{-1}\N(\afrak)^s B\left (1+\frac{s}{2},1+\frac{s}{2} \right ) \sum_{(m,n)\in \afrak \times \afrak} \frac{\left ( \overline{m\frac{a}{c}+n}\right )}{\vert m\frac{a}{c}+n \vert^{2+s}} \frac{\overline{m}}{\vert m \vert^{2+s}},
\end{align}
where
\begin{align}
     B\left (x,y \right )=\int_0^\8 \frac{t^{y-1}}{(1+t)^{x+y}} dt = \frac{\Gamma \left (x \right )\Gamma \left (y \right )}{\Gamma(x+y)}
\end{align}
is the Beta function. By writing $m=c\widetilde{m}+r$ and summing over $\widetilde{m}$ and $r$ we rewrite the inner sum in \eqref{sum3142} as
\begin{align}
     \sum_{r \in \afrak/c\afrak}\sum_{(\widetilde{m},n)\in \afrak \times \afrak} \frac{\left ( \overline{r\frac{a}{c}+a\widetilde{m}+n}\right )}{\vert r\frac{a}{c}+a\widetilde{m}+n \vert^{2+s}} \frac{\overline{c\widetilde{m}+r}}{\vert c\widetilde{m}+r \vert^{2+s}},
\end{align}
where the inner sum is restricted to $(\widetilde{m}, n)\neq (-r/c,0)$. By summing over $(\widetilde{m},a\widetilde{m}+n)$ instead of $(\widetilde{m},n)$, we rewrite the sum as
\begin{align}
   \sum_{r \in \afrak/c\afrak} \frac{1}{c\vert c \vert^s} G \left ( \frac{1+s}{2},1,\frac{ar}{c},0,\afrak \right )G \left ( \frac{1+s}{2},1,\frac{r}{c},0,\afrak \right ),
\end{align}
where the inner sum in the first line is restricted to $(\widetilde{m}, \widetilde{n})\neq (-r/c,-ar/c)$. Hence at $s=0$ we get
\begin{align}
    \int_{\gamma u_3}^{\gamma u_4} \Ehat_{\chi,\afrak} = \chi(\afrak)^{-1}B(1,1)\frac{1}{c} \sum_{r \in \afrak/c\afrak}  G_1 \left ( \frac{ar}{c},\afrak \right )G_1 \left (\frac{r}{c},\afrak \right ) =\chi(\afrak)^{-1}D(a,c,\afrak).
\end{align}
It follows from \eqref{twointegrals} and \eqref{integral1} that when $c \neq 0$:
\begin{align}
     \int_{u_0}^{\gamma u_0} \Ehat_{\chi,\afrak}=  \chi(\afrak)^{-1} \left ( G_2(\afrak) I\left (\frac{a+d}{c} \right )-D(a,c,\afrak) \right ).
\end{align}
\end{enumerate}

\item Suppose that $c=0$. Let $P_\epsilon(\gamma)$ the closed path pictured in Figure \ref{movingtoboundaryc}. \begin{figure}[h!]
\centering
\includegraphics[scale=0.45]{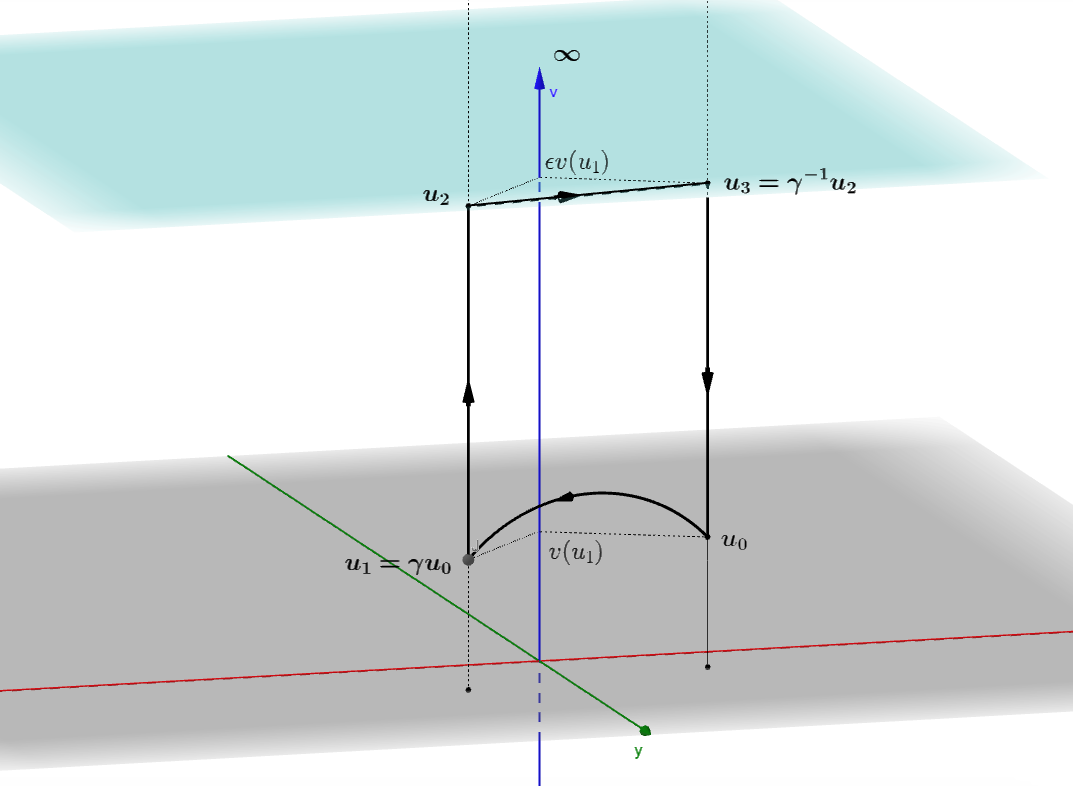}
\captionsetup{width=.75\textwidth,font={normalsize,it}}
\caption{The path $P_\epsilon(\gamma)$ for $c=0$. The two sides of the square are $\gamma$ translate of each other with opposite directions.}
\label{movingtoboundaryc}
\end{figure} The integrals along the paths $[u_1,u_2]$ and $[u_3,u_0]$ in Figure \ref{movingtoboundaryc} cancel since $[u_1,u_2]=-\gamma[u_3,u_0]$ and $\Ehat_{\chi,\afrak}$ is $\Gamma$-invariant. As in \eqref{limint} we have
\begin{align}
\int_{u_0}^{\gamma u_0} \Ehat_{\chi,\afrak} = - \lim_{\epsilon \rightarrow \8} \int_{u_2}^{u_3} \Ehat_{\chi,\afrak} = \int_z \Ehat_{\chi,\afrak}^{(\8)}
\end{align}
where $z$ is the segment in $\Hcal_\8$ obtained by moving $[u_2,u_3]$ to $\8$. Since $K$ has no non-trivial units, we have $a,d=\pm 1$ and $a/d=1$. If $u_0=z_0+jv_0$, then $\gamma u_0=z_0+\frac{b}{d} + jv_0$. Hence, the path $z$ is parametrized by 
$z(t)=z_0+\frac{b}{d}(1-t)$ for $0 \leq t \leq 1$. At the boundary the form is
\begin{align}
    \Ehat_{\chi,\afrak}^{(\8)}=\chi(\afrak)^{-1}G_2(\afrak)\left ( dz-d\bar{z} \right )
\end{align}
and thus
\begin{align}
   \int_{z} \Ehat_{\chi,\afrak}^{(\8)} = \chi(\afrak)^{-1}G_2(\afrak) \int_0^1 (z(t)-\bar{z}(t))dt= -\chi(\afrak)^{-1} G_2(\afrak) I \left (\frac{b}{d} \right ).
\end{align}
\end{enumerate}
\end{proof}
\begin{cor} The forms $2\Ehat^{\alg}_{\chi,\afrak} \coloneqq 2\Omega^{-2} \Ehat_{\chi,\afrak}$ are in $\cohomt^1(X_\Gamma,{\Ocal_F})$.
\end{cor}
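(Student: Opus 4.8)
The plan is to deduce the integrality of $2\Omega^{-2}\Ehat_{\chi,\afrak}$ from Sczech's integrality statement (Proposition~\ref{sczechintegral}) via the identification of Theorem~\ref{eisensteinszcech}. First I would check that $2\Omega^{-2}\Ehat_{\chi,\afrak}$ defines a class in $H^1(X_\Gamma;\C)$ at all: the form $\Ehat_{\chi,\afrak}$ (at $s=0$) is closed by Proposition~\ref{continuation}, it is $\Gamma$-invariant by construction, and by Proposition~\ref{resteis} its restrictions to the boundary components $\Gamma_r\backslash\Hcal_r$ are the honest forms $\chi(\afrak_c\afrak^{-1})\bigl(G_2(\afrak\afrak_c^{-1})\omega_{\chi,r}-G_2(\afrak\bar{\afrak}_c^{-1})\bar{\omega}_{\chi,r}\bigr)$, so it is a genuine closed form on $\HH_3^\ast$. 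Since the inclusion $Y_\Gamma\hookrightarrow X_\Gamma$ is a homotopy equivalence, $H_1(X_\Gamma;\Z)\cong H_1(Y_\Gamma;\Z)$, which by Proposition~\ref{surjectivegamma} is generated by the classes $[u_0,\gamma u_0]$, $\gamma\in\Gamma$. Consequently, exactly as in the criterion for $Y_\Gamma$ established in Section~\ref{eisensteincohom}, the class $[2\Omega^{-2}\Ehat_{\chi,\afrak}]$ lies in $\cohomt^1(X_\Gamma;\Ocal_F)$ if and only if $\int_{u_0}^{\gamma u_0}2\Omega^{-2}\Ehat_{\chi,\afrak}\in\Ocal_F$ for every $\gamma\in\Gamma$.

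By Theorem~\ref{eisensteinszcech} this integral equals $\chi(\afrak)^{-1}\Phi_\afrak(\gamma)$, so it suffices to show $2\Omega^{-2}\chi(\afrak)^{-1}\Phi_\afrak(\gamma)\in\Ocal_F$. This is where the period bookkeeping enters. From the homogeneity relations \eqref{propeis} for $G_1$ and $G_2$, together with the substitution $r\mapsto\mu r$ in the Dedekind sum $D(a,c,L)$, one sees that $\Phi_L$ is homogeneous of degree $-2$ in the lattice, i.e. $\Phi_{\mu L}=\mu^{-2}\Phi_L$ for every $\mu\in\C^\times$: when $c\neq0$ both the term $I((a+d)/c)G_2(L)$ and the term $D(a,c,L)$ scale by $\mu^{-2}$, and the case $c=0$ is immediate. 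Since $L_{\afrak^{-1}}=\Omega(\afrak^{-1})\afrak$, applying this with $L=L_{\afrak^{-1}}$ and $\mu=\Omega(\afrak^{-1})^{-1}$ gives $\Phi_\afrak=\Omega(\afrak^{-1})^2\,\Phi_{L_{\afrak^{-1}}}$. Inserting $\Omega(\afrak^{-1})=\lambda(\afrak^{-1})\Omega$ and $\chi(\afrak)^{-1}=\chi(\afrak^{-1})$ collapses the transcendental factor:
\[
2\Omega^{-2}\chi(\afrak)^{-1}\Phi_\afrak(\gamma)=2\,\chi(\afrak^{-1})\lambda(\afrak^{-1})^2\,\Phi_{L_{\afrak^{-1}}}(\gamma).
\]
Finally, $\chi(\afrak^{-1})\lambda(\afrak^{-1})^2$ is a unit in $\Ocal_F$ by Proposition~\ref{proplambda}, and $2\Phi_{L_{\afrak^{-1}}}(\gamma)\in\Ocal_H\subseteq\Ocal_F$ by Proposition~\ref{sczechintegral} (applicable because $\gamma\in\SL_2(\Ocal)$ forces $a,c\in\Ocal$). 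Hence the right-hand side lies in $\Ocal_F$ for every $\gamma$, as required.

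All the individual computations are short; the two places that call for care are the transfer of the ``integral cohomology class $\Longleftrightarrow$ integral periods along all $[u_0,\gamma u_0]$'' criterion from $Y_\Gamma$ to $X_\Gamma$ (handled by the homotopy equivalence and Proposition~\ref{surjectivegamma}), and keeping the homothety and period factors straight — in particular the appearance of $\afrak^{-1}$ rather than $\afrak$, which is forced by the fact that the Damerell--Sczech lattice $L_\bfrak$ is homothetic to $\bfrak^{-1}$. I do not expect a genuine obstacle beyond this bookkeeping: once the criterion is in place, the cancellation of $\Omega$ and the invocations of Propositions~\ref{proplambda} and \ref{sczechintegral} close the argument at once.
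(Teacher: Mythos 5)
Your proposal is correct and follows essentially the same route as the paper: reduce integrality to the periods $\int_{u_0}^{\gamma u_0}$ via Proposition \ref{surjectivegamma}, identify these with $\chi(\afrak)^{-1}\Phi_\afrak(\gamma)$ by Theorem \ref{eisensteinszcech}, use the degree $-2$ homogeneity of $\Phi_L$ to rewrite $\Omega^{-2}\Phi_\afrak$ as $\lambda(\afrak^{-1})^2\Phi_{L_{\afrak^{-1}}}$, and conclude with Propositions \ref{proplambda} and \ref{sczechintegral}. The only difference is that you spell out the homothety computation and the period criterion on $X_\Gamma$, which the paper leaves implicit in the single displayed equality \eqref{anequalityscz}.
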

\begin{proof}
    We have
    \begin{align} \label{anequalityscz}
        \int_{u_0}^{\gamma u_0} 2\Ehat^{\alg}_{\chi,\afrak}  = 2\Omega^{-2}  \int_{u_0}^{\gamma u_0} \Ehat_{\chi,\afrak} = \Omega^{-2} \chi(\afrak^{-1}) 2\Phi_{\afrak}(\gamma) = \chi(\afrak^{-1})\lambda(\afrak^{-1})^2 2\Phi_{L_{\afrak^{-1}}}(\gamma).
    \end{align}
By Proposition \eqref{proplambda} the factor $\chi(\afrak)\lambda(\afrak)^2$ is a unit in ${\Ocal_F}$ for any fractional ideal $\afrak$, and the result follows from the integrality of the Sczech cocycle.
\end{proof}

\section{Denominators of the Eisenstein cohomology}

We have seen that $H_{\Eis}^1(X_\Gamma;\C)$ is an $h$-dimensional complex vector space spanned by the forms $E_{\chi,\afrak}$. The ${\Ocal_F}$-lattice $\Lcal_{\Eis}$ of $H^1_{\Eis}(X_\Gamma,\C)$ defined by
\begin{align}
    \Lcal_{\Eis}\coloneqq \Eis(\cohomt^1(\partial X_\Gamma;\Ocal_F))=\bigoplus_{[\afrak] \in \Cl(K)}E_{\chi,\afrak} {\Ocal_F}
\end{align}
gives us an integral structure on $H^1_{\Eis}(X_\Gamma,\C)$, {\em i.e} $\Lcal_{\Eis}\otimes_{\Ocal_{F}} \C \simeq H^1_{\Eis}(X_\Gamma,\C)$. Another ${\Ocal_F}$-lattice is given by the integral Eisenstein classes
\begin{align}
    \Lcal_{0}\coloneqq \cohomt^1(X_\Gamma;{\Ocal_F}) \cap H_{\Eis}^1(X_\Gamma;\C) \subset H_{\Eis}^1(X_\Gamma;\C).
\end{align}
We define the denominator of the Eisenstein cohomology to be the $\Ocal_F$-ideal
\begin{align}
    \Den(\Lcal_{\Eis}) \coloneqq \left \{ \left . \lambda \in \Ocal_F \right \vert \lambda \Lcal_{\Eis} \subset \Lcal_0 \right \}.
\end{align}

\begin{lem}
    The submodule $\Lcal_{\Eis}$ does not depend on the choice of the Hecke character $\chi$. 
\end{lem}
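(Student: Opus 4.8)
The plan is to use the explicit description $\Lcal_{\Eis}=\bigoplus_{[\afrak]\in\Cl(K)}E_{\chi,\afrak}\Ocal_F$ recorded just above, and to track how the generators $E_{\chi,\afrak}$ transform when $\chi$ is replaced by another unramified Hecke character $\widetilde{\chi}$ of infinity type $(-2,0)$. As recalled in Section~\ref{eisdefs}, any two such characters differ by a character of the class group, so we may write $\widetilde{\chi}=\varphi\chi$ with $\varphi\in\widehat{\Cl(K)}$, and it suffices to compare $\Lcal_{\Eis}$ built from $\chi$ with the one built from $\varphi\chi$.

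First I would read off from the defining series \eqref{firsteis} that the dependence of $E_{\chi,\afrak}(u,s)$ on $\chi$ is only through the scalar $\chi(\afrak)^{-1}$: writing $E_{\chi,\afrak}(u,s)=\chi(\afrak)^{-1}\N(\afrak)^s\sum_{(c,d)}\eta(u,c,d,s)$, the sum on the right is independent of $\chi$. Since $\varphi\chi(\afrak)^{-1}=\varphi(\afrak)^{-1}\chi(\afrak)^{-1}$ and $\varphi(\afrak)$ depends only on the class $[\afrak]$, this gives $E_{\varphi\chi,\afrak}(u,s)=\varphi(\afrak)^{-1}E_{\chi,\afrak}(u,s)$ for $\re(s)\gg 0$, hence, by the analytic continuation of Proposition~\ref{continuation}, the identity $E_{\varphi\chi,\afrak}=\varphi(\afrak)^{-1}E_{\chi,\afrak}$ at $s=0$ in $H^1_{\Eis}(X_\Gamma;\C)$.

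Next I would observe that $\varphi(\afrak)$ is an $h$-th root of unity, because $\Cl(K)$ has order $h$; by the standing assumption that $F$ contains all $h$-th roots of unity, $\varphi(\afrak)\in\Ocal_F^\times$. Consequently the rank-one $\Ocal_F$-module $E_{\varphi\chi,\afrak}\Ocal_F=\varphi(\afrak)^{-1}E_{\chi,\afrak}\Ocal_F$ equals $E_{\chi,\afrak}\Ocal_F$ for every ideal class, and summing over $\Cl(K)$ yields $\bigoplus_{[\afrak]}E_{\varphi\chi,\afrak}\Ocal_F=\bigoplus_{[\afrak]}E_{\chi,\afrak}\Ocal_F$ as submodules of $H^1_{\Eis}(X_\Gamma;\C)$, i.e.\ $\Lcal_{\Eis}$ is unchanged.

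The argument is essentially bookkeeping, and I do not expect a genuine obstacle: the only substantive input is the unit claim $\varphi(\afrak)\in\Ocal_F^\times$, which is precisely why $F$ was chosen to contain the $h$-th roots of unity. The one point to state carefully is that both $\chi$ and $\widetilde{\chi}$ give a description $\Lcal_{\Eis}=\bigoplus_{[\afrak]}E_{\bullet,\afrak}\Ocal_F$ of the same flavour (this is where the choice of Hecke character enters the integral structure, through the boundary forms $\omega_{\chi,r},\bar\omega_{\chi,r}$), so that the term-by-term comparison above is all that is required.
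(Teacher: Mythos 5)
Your proposal is correct and follows essentially the same route as the paper: both rest on the relation $E_{\widetilde{\chi},\afrak}=\varphi(\afrak)^{-1}E_{\chi,\afrak}$ for $\varphi=\widetilde{\chi}/\chi\in\widehat{\Cl(K)}$ together with the observation that $\varphi(\afrak)^h=1$ forces $\varphi(\afrak)\in\Ocal_F^\times$. The paper packages this as a diagonal change-of-basis matrix $M=\diag(\varphi(\afrak_i)^{-1})$ and additionally checks $\det(M)=\pm1$, whereas you conclude summand-by-summand; the extra determinant step is not needed for the conclusion, so your streamlined version is fine.
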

\begin{proof}
    Let us temporarily denote by $\Lcal_{\chi,\Eis}$ and $\Lcal_{\widetilde{\chi},\Eis}$ the lattice that we obtain for two different Hecke characters $\widetilde{\chi}$ and $\chi$ of infinity type $(-2,0)$. Then $\varphi=\chi/\widetilde{\chi}$ is an $F$-valued character on the class group of $K$. Since $E_{\widetilde{\chi},\afrak}=\varphi(\afrak)^{-1}E_{\chi,\afrak}$, we have $\Lcal_{\chi,\Eis}=M\Lcal_{\widetilde{\chi},\Eis}$, where
    \[M=\operatorname{diag}(\varphi(\afrak_1)^{-1},\dots,\varphi(\afrak_h)^{-1}).\]
    Since $\varphi(\afrak)^h=1$, the value $\varphi(\afrak)$ is a unit in $\Ocal_{F}$ and $M \in \Mat_h(\Ocal_{F})$. Furthermore, the determinant of $M$ is
    \[\det(M)=\prod_{\afrak \in \Cl(K)} \varphi(\afrak)^{-1}.\]
    The substitution $\afrak \rightarrow \afrak^{-1}$ shows that $\det(M)=\det(M)^{-1}$. Hence $\det(M)=\pm 1$ and $M$ is in $\GL_h(\Ocal_{F})$.
\end{proof}

\subsection{The upper bound}

We will need the following result on Dedekind determinants, of which a proof can be found in \cite[Chapter.~3, Theorem.~6.1]{langcyclo}.
\begin{lem} \label{langlem}
    Let $f$ any complex valued function on a finite abelian group $G$. Then
    \[\det (f(a^{-1}b))_{a,b \in G}=\prod_{\varphi \in \widehat{G}}\left ( \sum_{a \in G}\varphi(a)f(a^{-1}) \right ).\]
\end{lem}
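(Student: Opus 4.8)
The plan is to prove this classical Dedekind group-determinant identity by diagonalising the $|G|\times |G|$ matrix $M \coloneqq \bigl(f(a^{-1}b)\bigr)_{a,b\in G}$ over $\C$, using the characters of $G$ to produce an eigenbasis. For each $\varphi\in\widehat{G}$ I would introduce the vector $w_\varphi\in\C^{G}$ with coordinates $(w_\varphi)_b=\varphi(b^{-1})$, $b\in G$.

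The first step is to check that each $w_\varphi$ is an eigenvector of $M$. Computing the $a$-th coordinate of $Mw_\varphi$ and substituting $c=a^{-1}b$ (so $b^{-1}=c^{-1}a^{-1}$) gives
\[(Mw_\varphi)_a=\sum_{b\in G}f(a^{-1}b)\,\varphi(b^{-1})=\sum_{c\in G}f(c)\,\varphi(c^{-1})\,\varphi(a^{-1})=\Bigl(\sum_{c\in G}f(c^{-1})\varphi(c)\Bigr)\varphi(a^{-1}),\]
where in the last equality I relabelled $c\mapsto c^{-1}$. Hence $Mw_\varphi=\mu_\varphi w_\varphi$ with $\mu_\varphi=\sum_{a\in G}\varphi(a)f(a^{-1})$, which is exactly the $\varphi$-factor appearing on the right-hand side of the asserted formula.

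The second step is to argue that $\{w_\varphi\}_{\varphi\in\widehat G}$ is a basis of $\C^{G}$. Since every value of $\varphi$ is a root of unity, $(w_\varphi)_b=\overline{\varphi(b)}$, and by the orthogonality relations for the characters of the finite abelian group $G$ the matrix $\bigl(\varphi(b)\bigr)_{\varphi,b}$ is invertible (its conjugate transpose is $|G|$ times its inverse). As $|\widehat G|=|G|=\dim_\C\C^{G}$, the $w_\varphi$ therefore form a basis consisting of eigenvectors of $M$, so $M$ is diagonalisable and $\det M=\prod_{\varphi\in\widehat G}\mu_\varphi$, which is the claim.

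There is no genuine obstacle here; the only points requiring a little care are the index substitution $c=a^{-1}b$ followed by the relabelling $c\mapsto c^{-1}$ in the eigenvalue computation, and the appeal to character orthogonality to obtain linear independence. A fully detailed version of this argument is the one given in \cite[Chapter~3, Theorem~6.1]{langcyclo}.
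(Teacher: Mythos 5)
Your proof is correct: the eigenvector computation and the appeal to character orthogonality are exactly right, and together they give $\det M=\prod_\varphi\mu_\varphi$ as claimed. The paper itself offers no proof of this lemma, only the citation to \cite[Chapter~3, Theorem~6.1]{langcyclo}, and your argument is precisely the standard diagonalisation proof given there, so there is nothing to compare beyond noting the agreement.
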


\begin{thm} \label{mainthm} The form
\[\frac{1}{2\sqrt{D}} L^{\integ}( \chi \circ \N_{H/K},0)E_{\chi,\afrak}\]
is integral in $\Ocal_F$ . In particular, the denominator has the upper bound
\[L^{\integ}( \chi \circ \N_{H/K},0){\Ocal_F} \subset  \Den(\Lcal_{\Eis}).\]
\end{thm}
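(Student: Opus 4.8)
The plan is to express the rational basis $\{E_{\chi,\afrak}\}_{[\afrak]}$ of $H^1_{\Eis}(X_\Gamma;\C)$ in terms of the integral basis $\{\Ehat_{\chi,\afrak}\}_{[\afrak]}$ and to check that the normalized $L$-value clears exactly the denominator of the inverse change of basis. Fix integral ideal representatives $\afrak_1,\dots,\afrak_h$ of $\Cl(K)$; Proposition~\ref{continuation} gives the change-of-basis matrix $M=(M_{ji})\in\Mat_h(F)$ with
\[M_{ji}\coloneqq\frac{\chi(\afrak_i\afrak_j^{-1})}{w(\afrak_i^{-1}\afrak_j)}\,G_2(\afrak_i^{-1}\afrak_j),\qquad \Ehat_{\chi,\afrak_j}=\sum_{i=1}^h M_{ji}\,E_{\chi,\afrak_i},\]
where $w(\cdot)\in\{1,2\}$ (we shall only use that $w$ divides $2$). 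Since $M_{ji}$ depends on $[\afrak_i],[\afrak_j]$ only through $[\afrak_i]^{-1}[\afrak_j]$, we may write $M_{ji}=g\big([\afrak_i]^{-1}[\afrak_j]\big)$ for a function $g$ on $\Cl(K)$, and Lemma~\ref{langlem} yields $\det M=\prod_{\varphi\in\widehat{\Cl(K)}}\sum_{a\in\Cl(K)}\varphi(a)g(a^{-1})$. A short computation with \eqref{Lfuncsplit}, applied to the type-$(-2,0)$ Hecke character $\varphi\chi$, identifies each inner sum with $L(\varphi\chi,0)$, so that $\det M=\prod_\varphi L(\varphi\chi,0)=L(\chi\circ\N_{H/K},0)$ by the Euler factorization recalled above; by Proposition~\ref{Lalgint} this is nonzero, hence $M$ is invertible.

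Next I would bound the entries. Writing $\cfrak\coloneqq\afrak_i\afrak_j^{-1}$, so that $\afrak_i^{-1}\afrak_j=\cfrak^{-1}=\Omega(\cfrak)^{-1}L_\cfrak$, the homogeneity of $G_2$ gives $G_2(\afrak_i^{-1}\afrak_j)=\Omega(\cfrak)^2 G_2(L_\cfrak)=\lambda(\cfrak)^2\,\Omega^2\, G_2(L_\cfrak)$; therefore
\[2\sqrt{D}\,\Omega^{-2}\,\chi(\cfrak)\,G_2(\cfrak^{-1})=\big(\chi(\cfrak)\lambda(\cfrak)^2\big)\cdot\big(2\sqrt{D}\,G_2(L_\cfrak)\big)\in\Ocal_F,\]
the first factor being a unit of $\Ocal_F$ by Proposition~\ref{proplambda} and the second lying in $\Ocal_H$ by Proposition~\ref{eisint} (Damerell). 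Dividing by $w(\cfrak^{-1})\mid 2$ shows $2\sqrt{D}\,\Omega^{-2}M_{ji}\in\tfrac12\Ocal_F$, i.e.\ $\sqrt{D}\,\Omega^{-2}M_{ji}\in\tfrac14\Ocal_F$ for all $i,j$.

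To conclude I would invert and count denominators. Cramer's rule turns $\Ehat_{\chi,\afrak_j}=\sum_i M_{ji}E_{\chi,\afrak_i}$ into $\det(M)\,E_{\chi,\afrak_i}=\sum_j \operatorname{adj}(M)_{ij}\,\Ehat_{\chi,\afrak_j}$, where each cofactor $\operatorname{adj}(M)_{ij}$ is a $\Z$-linear combination of products of exactly $h-1$ entries of $M$. Since $L^{\integ}(\chi\circ\N_{H/K},0)=4^hD^{h/2}\Omega^{-2h}\det M$, distributing one factor $\sqrt{D}\,\Omega^{-2}$ onto each such entry gives
\[\frac{1}{2\sqrt{D}}L^{\integ}(\chi\circ\N_{H/K},0)\,E_{\chi,\afrak_i}=2^{2h-1}\sum_{j=1}^h\Big((\sqrt{D}\,\Omega^{-2})^{h-1}\operatorname{adj}(M)_{ij}\Big)\Big(\Omega^{-2}\Ehat_{\chi,\afrak_j}\Big).\]
By the previous paragraph the first bracket lies in $4^{-(h-1)}\Ocal_F$, while $\Omega^{-2}\Ehat_{\chi,\afrak_j}=\Ehat^{\alg}_{\chi,\afrak_j}\in\tfrac12\cohomt^1(X_\Gamma;\Ocal_F)$ by the corollary to Theorem~\ref{eisensteinszcech}; as $2^{2h-1}\cdot 4^{-(h-1)}\cdot\tfrac12=1$, the right-hand side lies in $\cohomt^1(X_\Gamma;\Ocal_F)$, hence in $\Lcal_0$. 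Because $\Lcal_{\Eis}=\bigoplus_{[\afrak]}E_{\chi,\afrak}\Ocal_F$, this gives $\tfrac{1}{2\sqrt{D}}L^{\integ}(\chi\circ\N_{H/K},0)\in\Den(\Lcal_{\Eis})$, and since $\Den(\Lcal_{\Eis})$ is an $\Ocal_F$-ideal and $2\sqrt{D}\in\Ocal_F$, the stated inclusion $L^{\integ}(\chi\circ\N_{H/K},0)\Ocal_F\subset\Den(\Lcal_{\Eis})$ follows.

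The main difficulty is not a single estimate but the exact bookkeeping of the three denominators that appear: the hidden period $\Omega^{-2}$ inside each $M_{ji}$ (cleared by Proposition~\ref{proplambda} together with Damerell's Proposition~\ref{eisint}), the factor $2$ coming from the integrality of the Sczech cocycle, and the explicit constant $4^hD^{h/2}$ in the definition of $L^{\integ}$. These must cancel with no slack, so that the Eisenstein form becomes genuinely integral over $\Ocal_F$ and not merely integral away from $2$. The Dedekind-determinant identity $\det M=L(\chi\circ\N_{H/K},0)$, which is the conceptual reason the $L$-value appears, is routine once $M$ is put in the form $g(a^{-1}b)$.
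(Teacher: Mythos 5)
Your proposal is correct and follows essentially the same route as the paper: the change-of-basis matrix from Proposition \ref{continuation}, the Dedekind determinant identity giving $\det M = L(\chi\circ\N_{H/K},0)$, the integrality of $4\sqrt{D}\,\Omega^{-2}$ times each entry via Propositions \ref{proplambda} and \ref{eisint}, the adjugate/Cramer inversion, and the integrality of $2\Ehat^{\alg}_{\chi,\afrak}$ from the corollary to Theorem \ref{eisensteinszcech}. The only difference is cosmetic bookkeeping: the paper first rescales to $\widetilde{M}_\chi=4\sqrt{D}M_\chi\in\Mat_h(\Ocal_F)$ and works with its adjugate, whereas you keep $M$ unnormalized and distribute one factor of $\sqrt{D}\,\Omega^{-2}$ onto each of the $h-1$ entries in every cofactor; the powers of $2$ and $\sqrt{D}$ cancel identically in both versions.
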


\begin{proof} 
Let $M_{\chi} \in \GL_h(F)$ be the matrix such that
\begin{align} \label{equality}
    \begin{pmatrix}
        \Ehat^{\alg}_{\chi,\afrak_1} \\      
        \vdots \\
        \Ehat^{\alg}_{\chi,\afrak_h}
    \end{pmatrix}=M_{\chi} \begin{pmatrix}
        E_{\chi,\afrak_1} \\      
        \vdots \\
        E_{\chi,\afrak_h}
    \end{pmatrix}.
\end{align}
By Proposition \ref{continuation}, we have
\[M_\chi = \left [ \Omega^{-2}\frac{\chi(\afrak\bfrak^{-1})}{w(\afrak^{-1}\bfrak)}G_2(\afrak^{-1}\bfrak) \right ]_{[\afrak],[\bfrak] \in \Cl(K)}.\]
Applying the previous lemma to the function 
\[f(\afrak) =\Omega^{-2} \frac{\chi(\afrak^{-1})}{w(\afrak)}G_2(\afrak), \] we then have
\begin{align}
    \det(M_\chi) & = \prod_{\varphi \in \widehat{\Cl(K)}}\left ( \Omega^{-2} \sum_{\afrak \in \Cl(K)}\varphi(\afrak)\frac{\chi(\afrak)}{w(\afrak^{-1})}G_2(\afrak^{-1}) \right ) \nonumber \\
    & = \prod_{\varphi \in \widehat{\Cl(K)}}\Omega^{-2} L(\varphi \chi,0) \nonumber \\
    & = \prod_{\varphi \in \widehat{\Cl(K)}}L^{\alg}(\varphi \chi,0) \nonumber \\
    & = L^{\alg}( \chi \circ \N_{H/K},0).
\end{align}
We have already seen in the proof of Proposition \ref{Lalgint} that for any fractional ideal $\afrak$
\begin{align}
    4\sqrt{D}f(\afrak^{-1}) & = 4\sqrt{D} \frac{\chi(\afrak)}{w(\afrak^{-1})}\lambda(\afrak)^2G_2(L_\afrak) \in {\Ocal_F}
\end{align}
is integral. Hence the matrix $\widetilde{M}_{\chi}\coloneqq 4\sqrt{D}M_\chi$ is in $\GL_h(F) \cap \Mat_h({\Ocal_F})$. Furthermore, the adjoint matrix $N_\chi\coloneqq \det(\widetilde{M}_\chi)\widetilde{M}^{-1}_\chi$ also has coefficients in ${\Ocal_F}$ and 
\begin{align}
    N_{\chi}M_{\chi} & =4^{-1}D^{-\frac{1}{2}}N_\chi \widetilde{M}_\chi \nonumber \\
    & =4^{-1}D^{-\frac{1}{2}}\det(\widetilde{M}_\chi) \nonumber \\
    & =4^{h-1}D^{\frac{h-1}{2}}\det(M_\chi) \nonumber \\
    & =4^{h-1}D^{\frac{h-1}{2}}L^{\alg}( \chi \circ \N_{H/K},0) \nonumber \\
    & =4^{-1}D^{-\frac{1}{2}}L^{\integ}( \chi \circ \N_{H/K},0).
\end{align} Applying $2N_\chi$ to both sides of \eqref{equality} we find that
\begin{align}
    2N_\chi\begin{pmatrix}
        \Ehat^{\alg}_{\chi,\afrak_1} \\      
        \vdots \\
        \Ehat^{\alg}_{\chi,\afrak_h}
    \end{pmatrix}=\frac{1}{2\sqrt{D}}L^{\integ}( \chi \circ \N_{H/K},0) \begin{pmatrix}
        E_{\chi,\afrak_1} \\      
        \vdots \\
        E_{\chi,\afrak_h}
    \end{pmatrix}.
\end{align}
Since the left hand side is integral, it follows that
\begin{align}
    \frac{1}{2\sqrt{D}}L^{\integ}( \chi \circ \N_{H/K},0)E_{\chi,\afrak_i}
\end{align} is integral for any $i$.
\end{proof}

\subsection{Relation to the work of Berger} \label{bergerrel}
 In his work \cite{berger1}, Berger uses the adelic setting that we will need to convert to the classical setting. Let us suppose for the rest of this section that $K$ has class number one. A more general result relating the adelic Eisenstein cohomology to Sczech's cocycle is due to Weselmann, see in particular \cite[Bemerkung.~2, p.~116]{weselmann}.

We recall the setting of \cite[Section.~4.3]{berger_2009}. Let $\phi=(\phi_1,\phi_2)$ be a pair of Hecke characters on the idèles $\A_K^\times$, where $\phi_{1,\8}(z)=z$ and $\phi_{2,\8}(z)=z^{-1}$. Hence, the character $\chi=\phi_1/\phi_2$ has infinity type $\chi_\8(z)=z^2$. Suppose that $\chi$ is unramified.
Let $G \coloneqq \GL_{2/K}$, let $K_\8\coloneqq \C^\ast \cdot \U(2)$. Let $\Mfrak_i$ be the conductor of $\phi_i$ and set
\begin{align}
K_\fin = \prod_{v \mid \Mfrak_1} U^1(\Mfrak_{1,v}) \prod_{v \mid \Mfrak_1} \GL_2(\Ocal_v)
\end{align}
where $U^1(\Mfrak_{1,v})= \left \{ \left . k \in \GL_2(\Ocal_v) \right \vert  \det(k) \equiv 1 \mod \Mfrak_{1,v}\right \}$ and $\Mfrak_{1,v}$ is the ideal $\Mfrak_{1}\Ocal_v$. We define the adelic space
\begin{align}
    S_{K_\fin} \coloneqq G(\Q) \backslash G(\A)/K_\8K_\fin.
\end{align}
Since the class number of $K$ is one, we have a decomposition
\begin{align} \label{decompadel}
    G(\A_\fin)=G(\Q)K_\fin.
\end{align} This yields the isomorphism
\begin{align} \label{isomoradelic}
    S_{K_\fin} & \longrightarrow Y_{\Gamma} \nonumber \\
    G(\Q) (g_\8,g_\fin)K_\8K_\fin & \longmapsto \Gamma g^{-1}g_\8 j,
\end{align}
where we write $g_\fin=gk_\fin$ according to the decomposition \eqref{decompadel}, and $\Gamma= G(\Q) \cap K_\fin $.

\begin{rmk} The fact that $\Gamma= G(\Q) \cap K_\fin$ is only true if $1$ is the only unit of $\Ocal^\ast$ that is congruent to $1$ modulo $\Mfrak$. If $-1$ is congruent $1$ modulo $\Mfrak$, then $G(\Q) \cap K_\fin = \GL_2(\Ocal)$. However this does not make a difference for the rest of the discussion. The Eisenstein series on $\GL_2(\Ocal) \backslash \HH_3$ and $\SL_2(\Ocal) \backslash \HH_3$ are the same, up to a potential factor $2$. However this factor will be irrelevant since we will consider the denominator away from certain primes dividing $2D$.
\end{rmk}

Let $\g$ be the Lie algebra of $G(\R)$ and $\kfrak$ be the Lie algebra of $K_\8$. We have an orthogonal decomposition $\g=\kfrak \oplus \p$ with respect to the Killing form. We consider the following elements in $\p_\C$:
\begin{align}
X & = \begin{pmatrix}
        0 & 1 \\ 1 & 0
    \end{pmatrix} , \qquad  Y=\begin{pmatrix}
        0 & i \\ -i & 0
    \end{pmatrix}, \qquad S =\frac{1}{2} \left ( X \otimes_{\R} 1 - Y \otimes_{\R} i \right ).
\end{align}
\begin{lem}
After identifying $\p$ with the tangent space of $\HH_3$ at $j$, we have
\[X \simeq 2\restr{\frac{\partial}{\partial x}}{u=j}, \qquad Y \simeq 2\restr{\frac{\partial}{\partial y}}{u=j}, \qquad S \simeq 2\restr{\frac{\partial}{\partial z}}{u=j}= \restr{\frac{\partial}{\partial x}}{u=j}-i\restr{\frac{\partial}{\partial y}}{u=j}.\]
\end{lem}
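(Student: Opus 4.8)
The plan is to unwind the identification of $\p$ with $T_j\HH_3$ used in the statement: it is the isomorphism induced by the orbit map $g\mapsto g\cdot j$ for the transitive $\SL_2(\C)$-action on $\HH_3$ recalled above, whose differential at the identity sends $Z\in\p$ to $\frac{d}{dt}\big|_{t=0}\bigl(\exp(tZ)\cdot j\bigr)\in T_j\HH_3$. Since $X$ and $Y$ are trace-zero Hermitian matrices they genuinely lie in $\p$ and satisfy $\exp(tX),\exp(tY)\in\SL_2(\C)$, so the first step is to exponentiate the corresponding one-parameter subgroups, apply the action $u\mapsto(au+b)(cu+d)^{-1}$ at $u=j$, and differentiate at $t=0$. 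The assertion for $S$ will then follow by extending the identification $\C$-linearly to $\p_\C\simeq T_j\HH_3\otimes_\R\C$.

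Concretely, $X^2=Y^2=\id$, so $\exp(tX)=\begin{pmatrix}\cosh t & \sinh t\\ \sinh t & \cosh t\end{pmatrix}$ and $\exp(tY)=\begin{pmatrix}\cosh t & i\sinh t\\ -i\sinh t & \cosh t\end{pmatrix}$. To evaluate the action on $j$ I would invert the relevant element of $\HH_3$ directly, via $(cj+d)^{-1}=\overline{cj+d}\,/\,\lvert cj+d\rvert^{2}$, keeping in mind the rule $j\lambda=\bar\lambda\, j$ for $\lambda\in\C$. For $X$ this gives $\lvert\sinh t\cdot j+\cosh t\rvert^{2}=\sinh^{2}t+\cosh^{2}t=\cosh 2t$ and $(\cosh t\cdot j+\sinh t)\,\overline{(\sinh t\cdot j+\cosh t)}=\sinh 2t+j$, hence
\[
\exp(tX)\cdot j=\tanh(2t)+\frac{1}{\cosh 2t}\,j ,
\]
whose derivative at $t=0$ is $2\,\restr{\tfrac{\partial}{\partial x}}{u=j}$. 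The identical computation for $Y$ yields $\exp(tY)\cdot j=i\tanh(2t)+\frac{1}{\cosh 2t}\,j$, whose derivative at $t=0$ is $2\,\restr{\tfrac{\partial}{\partial y}}{u=j}$.

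It then remains to handle $S$. Because the identification $\p\xrightarrow{\sim}T_j\HH_3$ is $\R$-linear, its $\C$-linear extension sends $S=\tfrac12\bigl(X\otimes_\R 1-Y\otimes_\R i\bigr)$ to $\tfrac12\bigl(2\,\restr{\tfrac{\partial}{\partial x}}{u=j}-2i\,\restr{\tfrac{\partial}{\partial y}}{u=j}\bigr)=\restr{\tfrac{\partial}{\partial x}}{u=j}-i\,\restr{\tfrac{\partial}{\partial y}}{u=j}=2\,\restr{\tfrac{\partial}{\partial z}}{u=j}$, the last equality being the Wirtinger convention $\partial/\partial z=\tfrac12(\partial/\partial x-i\,\partial/\partial y)$. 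I expect the only real friction to be bookkeeping: tracking the noncommutativity of $\HH_3$ (in particular $j\lambda=\bar\lambda j$) when multiplying out the Möbius action and its quaternionic denominator, and being consistent about the normalization of the base point $j$. Conceptually there is nothing more than differentiating an explicit orbit at a point.
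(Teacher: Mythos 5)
Your proposal is correct and follows essentially the same route as the paper: identify $\p$ with $T_j\HH_3$ via the orbit map, exponentiate the one-parameter subgroups $\exp(tX)$ and $\exp(tY)$, compute $\exp(tX)\cdot j=\tanh(2t)+\tfrac{1}{\cosh 2t}j$ (exactly the paper's formula), differentiate at $t=0$, and extend $\C$-linearly to handle $S$. You are in fact slightly more complete than the paper, which only carries out the computation for $X$ and declares $Y$ similar, leaving the $S$ case implicit.
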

\begin{proof}
    Let us prove the statement for $X$. Let $f(u)$ be a function on $\HH_3$ with coordinates $u=x+iy+jv$. The action of $X$ on $f$ at $u=j$ is given by
    \begin{align}
        (Xf)(j) \coloneqq \restr{\frac{d}{dt}}{t=0}f(\Phi_X(t))
    \end{align}
    where 
    \begin{align}
        \Phi_X(t) \coloneqq \exp(tX)j=\begin{pmatrix}
            \cosh(t) & \sinh(t) \\
            \sinh(t) & \cosh(t)
        \end{pmatrix}j.
    \end{align}
In the coordinates $u=x+iy+jv$, the map is explicitely given by
\begin{align}
    \Phi_X(t)= \frac{1}{\cosh(t)^2+\sinh(t)^2}(\sinh(2t),0,1) \in \HH_3 \simeq \R^2 \times \R_{>0}
\end{align} and its derivative is 
\begin{align}
    \Phi'_X(0)=(2,0,0).
\end{align} Hence
\begin{align}
    (Xf)(j)= \Phi'_X(0) d_{u=j}f= 2\frac{\partial f}{\partial x}(j).
\end{align} The computation for $Y$ is similar.

\end{proof}
Consider the Borel subgroup 
\begin{align}
    B(\Q) \coloneqq \left \{ \begin{pmatrix}
        a & b \\ 0 & d
    \end{pmatrix} \in \GL_2(K)\right \}.
\end{align} We have $G(\Q)=B(\Q)\Gamma$, hence \eqref{decompadel} becomes 
\begin{align} \label{decompadel2}
    G(\A_\fin)=B(\Q)K_\fin
\end{align}since $\Gamma \subset K_\fin$. We have an isomorphism similar to \eqref{isomoradelic}:
\begin{align} \label{isomoradelic2}
    B(\Q) \backslash G(\A)/K_\8K_\fin & \longrightarrow \Gamma_\8 \backslash \HH_3 \nonumber \\
    B(\Q) (g_\8,g_\fin)K_\8K_\fin & \longmapsto \Gamma_\8 b_\8^{-1}g_\8 j,
\end{align}
where $g_\8=b_\8k_\fin$ according to the decomposition \eqref{decompadel2}.
We have an isomorphism
\begin{align} \label{isoform1}
     \Hom_{K_\8}(\p, C^{\8} \left (G(\Q) \backslash G(\A)/K_\fin \right )) \longrightarrow \Omega^1(\HH_3)^{\Gamma},
\end{align}
where $K_\8$ acts by the adjoint representation on $\p$. Hence, an element $\omegatilde[X](g_\8,g_\fin)$ in the space on the left-hand side satisfies $\omegatilde[X](g_\8k_\8,g_\fin)=\omegatilde[\Ad(k_\8)^{-1}X](g_\8,g_\fin)$. The map goes as follows.  Let $u=g_\8j$ be a point in $\HH_3$ and $X_u \in T_u\HH_3$ a tangent vector at $u$. We define a differential form in $\Omega^1(\HH_3)^{\Gamma}$ by
\begin{align} \label{isoadel1}
    \omega_u(X_u) \coloneqq \omegatilde[d_uL_{g_\8^{-1}}X_u](g_\8,1)
\end{align}
where $L_{g}(u)=gu$ is the map induced by the action of $G(\R)$ on $\HH_3$. Similarly, we have
\begin{align} \label{isoform2}
     \Hom_{K_\8}(\p, C^{\8} \left (B(\Q) \backslash G(\A)/K_\fin \right )) \longrightarrow \Omega^1(\HH_3)^{\Gamma_\8}.
\end{align}
The Eisenstein operator considered by Berger is
\begin{align} \label{operatoreis}
   \Eis \colon \Hom_{K_\8}(\p, C^{\8} \left (B(\Q) \backslash G(\A)/K_\fin \right )) & \longrightarrow  \Hom_{K_\8}(\p, C^{\8} \left (G(\Q) \backslash G(\A)/K_\fin \right )) \nonumber \\
   \omegatilde & \longrightarrow \Eis(\omegatilde)[X](g) \coloneqq \sum_{\gamma \in B(\Q)\backslash G(\Q)} \omegatilde[X](\gamma g).
\end{align}
It is immediate from the definitions that the following diagram is commutative
\begin{equation} \label{adiagram}
\begin{tikzcd}
  \Hom_{K_\8}(\p, C^{\8} \left (B(\Q) \backslash G(\A)/K_\fin \right )) \arrow[d] \arrow[r,"\Eis"] &  \Hom_{K_\8}(\p, C^{\8} \left (G(\Q) \backslash G(\A)/K_\fin \right )) \arrow[d] \\
 \Omega^1(\HH_3)^{\Gamma_\8} \arrow[r,"\Eis"] & \Omega^1(\HH_3)^{\Gamma}.
\end{tikzcd}
\end{equation}
Here the vertical arrows are the ismorphisms \eqref{isoform1} and \eqref{isoform2}, and the horizontal map on the bottom is the operator $\Eis$ we considered in \eqref{eisstep1}.

Let $\phi=(\phi_1,\phi_2)$ be as above. We view $\phi$ as a character on $B(\A)$ by
\begin{align}
    \phi \begin{pmatrix}
        a & b \\ 0 & d
    \end{pmatrix}=\phi_1(a)\phi_2(d),
\end{align}
and for a complex parameter $s$ 
\begin{align}
    \phi_s \begin{pmatrix}
        a & b \\ 0 & d
    \end{pmatrix}=\phi_1(a)\phi_2(d)\left \vert \frac{a}{d} \right \vert^{s/2}.
\end{align}
The induced representation $V_\phi$ consists of complex valued functions $\Psi$ on $G(\A_\fin)$ that satisfy $\Psi(bg)=\phi(b)\Psi(g)$ for every $b \in B(\A_\fin)$, and $\Psi(gk)=\Psi(g)$ for every $k \in K_\fin$. Given a function $\Psi \in V_{\phi_s}$, we define 
\begin{align}
    \omegatilde_{\Psi,s} \in \Hom_{K_\8}(\p, C^{\8} \left (B(\Q) \backslash G(\A)/K_\fin \right ))
\end{align}
by
\begin{align}
  \omegatilde_{\Psi,s}[X](g)= \left \vert \frac{a}{d} \right \vert^{1+s}\Psi(g_\fin) \check{S}(\Ad(k_\8^{-1}
  )X) 
\end{align}
where 
\begin{align}
  g_\8=\begin{pmatrix} a & b \\ 0 & d\end{pmatrix}k_\8 \in B(\R)K_\8  
\end{align} and $\check{S} \in \Hom(\p,\C)$ is the dual of $S$. Berger then considers the Eisenstein class
\begin{align}
    \Eis(\Psi,s) \coloneqq \Eis(\omegatilde_{\Psi,s})
\end{align}
where the operator $\Eis$ on the right-hand side is the operator in \eqref{operatoreis}.

Since the class number of $K$ is one, we have a unique equivalence class of cusps and the Eisenstein series that we defined in Section \ref{eisensteincohom} is
\begin{align}
    E_\chi(u,s)=E_{\chi,\Ocal}(u,s)= \Eis(v^s dz) = \sum_{\gamma \in \Gamma_\8 \backslash \Gamma} \gamma^\ast \left( v^s dz\right).\end{align}

\begin{prop} \label{compeis} Let $\chi$ be an unramified character on $\A_K$ such that $\chi_\8(z)=z^2$. Let $\phi_1$ and $\phi_2$ be two characters as above and such that $\chi=\phi_1/\phi_2$. For any $\Psi$ in $V_{\phi}$ we have $\Eis(\Psi,s)=\frac{1}{2}\Eis_{\chi}(u,s)$
\end{prop}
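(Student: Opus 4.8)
The plan is to unwind both sides of the claimed identity through the explicit isomorphisms and the commutative diagram \eqref{adiagram}, reducing everything to a pointwise comparison of differential forms at the base point $u=j$, and then to chase the normalization constants. First I would use the fact that the class number is one, so that $K_\fin$ contains $\GL_2(\Ocal_v)$ at every finite place away from the conductor and the decomposition \eqref{decompadel2} applies. Under the isomorphism \eqref{isoform2}, the adelic Eisenstein input $\omegatilde_{\Psi,s}$ corresponds to some differential form $\omega_{\Psi,s}$ in $\Omega^1(\HH_3)^{\Gamma_\8}$, and by the commutativity of \eqref{adiagram} the adelic Eisenstein class $\Eis(\Psi,s)$ corresponds to $\Eis(\omega_{\Psi,s})$ computed with the classical operator \eqref{eisstep1}. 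So it suffices to show that $\omega_{\Psi,s}$ equals $\tfrac12 v^s\,dz$ as a form on $\HH_3$ (invariant under $\Gamma_\8$), after which applying $\Eis$ to both sides and recalling $E_\chi(u,s)=\Eis(v^s\,dz)$ finishes the proof by linearity of $\Eis$.

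The heart of the computation is the evaluation of $\omega_{\Psi,s}$ at $u=j$, i.e. unwinding the formula
\[
\omegatilde_{\Psi,s}[X](g)=\left\vert \frac{a}{d}\right\vert^{1+s}\Psi(g_\fin)\,\check S\bigl(\Ad(k_\8^{-1})X\bigr)
\]
through \eqref{isoadel1}. At $g_\8=1$ (so $u=j$, $a=d=1$, $k_\8=1$), the form $\omega_j$ acts on a tangent vector $X_j\in T_j\HH_3$ by $\omega_j(X_j)=\omegatilde_{\Psi,s}[X_j](1)=\Psi(1)\,\check S(X_j)$, where I have identified $T_j\HH_3$ with $\p$ via the previous lemma. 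Since $\Psi\in V_\phi$ and $\phi$ is trivial on $K_\fin\supset$ (the relevant local groups), one checks $\Psi(1)=1$ — this is where the hypothesis that $\chi$, hence $\phi$ on the relevant subgroup, is unramified enters, and any normalization of $\Psi$ must be pinned down here. Then I would use the lemma identifying $S$ with $2\,\partial/\partial z|_{u=j}$, so that its dual $\check S$ pairs with the coordinate differentials as $\check S = \tfrac12\,dz|_{u=j}$; combined with the factor $|a/d|^{1+s}=1$ at the base point this gives $\omega_j = \tfrac12\,dz|_{u=j}$. Propagating this to a general point $u=g_\8 j$ by $\Gamma_\8$-equivariance (equivalently, by translating by $g_\8\in B(\R)$, which scales $v$ and $dz$ by the appropriate powers coming from $|a/d|^{1+s}$ and the automorphy of $dz$), one recovers exactly $\tfrac12 v^s\,dz$; this is essentially a repetition of the computations in \eqref{gammav} and \eqref{jacobian}.

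The main obstacle I anticipate is bookkeeping of the various normalizations: the factor $|a/d|^{1+s}$ versus $|a/d|^{s/2}$ in $\phi_s$, the identification of $\p$ with $T_j\HH_3$ (where the lemma gives factors of $2$), the precise duality convention for $\check S$, and the value $\Psi(1)$, which depends on how $\Psi\in V_{\phi_s}$ is normalized and on the relation $\chi=\phi_1/\phi_2$ with $\chi_\8(z)=z^2$. Getting the single overall constant $\tfrac12$ right — rather than $1$, $2$, or $\tfrac14$ — is the delicate part, and it is exactly the content of the proposition; conceptually everything else is a direct translation between the adelic and classical descriptions of the same Eisenstein series. I would double-check the constant by specializing to the case where $\Psi$ is the standard spherical vector and comparing constant terms (Fourier expansion at $\8$) on both sides, using the computations of Section \ref{eisdefs}.
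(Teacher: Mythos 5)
Your proposal is correct and follows essentially the same route as the paper: reduce via the commutative diagram \eqref{adiagram} to showing that $\omegatilde_{\Psi,s}$ corresponds to $\tfrac12 v^s\,dz$ under \eqref{isoform2}, then extract the factor $\tfrac12$ from $\check S$ via the identification $S\simeq 2\,\partial/\partial z|_{u=j}$ and the factor $v^s$ from $|a/d|^{1+s}$ combined with $dL_{b_\8^{-1}}=\diag(v^{-1},v^{-1},v^{-1})$. The paper evaluates directly at a general point $u=z+jv$ using $b_\8=\begin{psmallmatrix} v & z\\ 0& 1\end{psmallmatrix}$ rather than at the basepoint followed by propagation, but this is the same computation; your explicit flagging of the normalization $\Psi(1)=1$ (forced, up to scalar, since $G(\A_\fin)=B(\Q)K_\fin$ in class number one) is a point the paper leaves implicit.
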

\begin{proof}
    Let $\omega$ be the form in $\Omega^1(\HH_3)^\Gamma$ corresponding to $\omegatilde_{\Psi,s}$ by the isomorphism \eqref{isoform2}. For $u=z+jv$ a point on $\HH_3$, the matrix
    \begin{align}
        b_\8 \coloneqq \begin{pmatrix}
            v & z \\ 0 & 1
        \end{pmatrix} \in B(\R)
    \end{align}
    sends $j$ to $u$. By \eqref{isoadel1}, we then have
    \begin{align}
        \omega_u(X_u) & =\omegatilde_{\Psi,s}[d_uL_{b_\8^{-1}}X_u](b_\8,1) \nonumber \\
        & = v^{1+s} \check{S}(d_uL_{b_\8^{-1}}X_u).
    \end{align}
It is immediate to check that $dL_{b_\8^{-1}}X_u=\diag(v^{-1},v^{-1},v^{-1})$. Hence 
\begin{align}
    dL_{b_\8^{-1}}\left ( \restr{\frac{\partial}{\partial z}}{u} \right )=v^{-1}\restr{\frac{\partial}{\partial z}}{j},
\end{align}
and we find that $\omega=\frac{1}{2}v^sdz$. The result follows from the commutativity of the diagram \eqref{adiagram}.
\end{proof}
Since $K$ has class number one we have $F=H=K$ and the algebraic $L$-function is $L^{\alg}(\chi \circ \N_{H/K},0)=L^{\alg}(\chi,0)=\frac{1}{2}G_2(L_\Ocal)$ and $L^{\integ}(\chi \circ \N_{H/K},0)=2\sqrt{D}G_2(L_\Ocal)$. Now let $\q$ be a prime ideal of $\Ocal$ and let $\Den(\Lcal_{\Eis,\q})$ be as in \eqref{eisatq}.

\begin{cor}\label{maincor} Let $\q$ be a prime ideal coprime to $2D$, and suppose that $K$ has class number one.
    Then the denominator at $\q$ of the Eisenstein cohomology is exactly
    \[ \Den(\Lcal_{\Eis,\q})=G_2(L_\Ocal)\Ocal_{\q}. \]
\end{cor}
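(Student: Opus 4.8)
The plan is to sandwich $\Den(\Lcal_{\Eis,\q})$ between the upper bound coming from Theorem~\ref{mainthm} and Berger's lower bound from \cite{berger1}. When $h=1$ Berger's bound applies directly, since \eqref{decompadel}--\eqref{isomoradelic} identify $S_{K_\fin}$ with $Y_\Gamma$, and the dictionary between his adelic normalization and the classical one used here is furnished by Proposition~\ref{compeis}. Everything is carried out after localizing at $\q$, where the hypothesis $\q\nmid 2D$ turns the stray factors $2$ and $\sqrt D$ into units.

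\emph{Upper bound.} Since $h=1$ we have $F=H=K$, the boundary $\partial X_\Gamma$ is the single torus $\C/\Ocal$, and $\cohomt^1(\partial X_\Gamma;\Ocal_\q)$ is free of rank two over $\Ocal_\q$. Because $\q\nmid D$, it is generated over $\Ocal_\q$ by $\omega_{\chi,r}$ and $\bar\omega_{\chi,r}$: the only obstruction to these being an $\Ocal_K$-basis of $\cohomt^1(\C/\Ocal;\Ocal_K)$ is the factor $\tau-\bar\tau=\sqrt D$ in the base change to the topological generators of $H^1(\C/\Ocal;\Z)$, which is a unit at $\q$. Using $\res_r(E_{\chi,\Ocal})=\omega_{\chi,r}-\bar\omega_{\chi,r}$ (Proposition~\ref{resteis}, valid since $h=1$), the identity \eqref{involution2}, and the injectivity of $\res$ on $H^1_{\Eis}$ (which follows from \eqref{splitting}), one obtains $\iota^\ast E_{\chi,\Ocal}=-E_{\chi,\Ocal}$; hence $\Eis(\omega_{\chi,r})=E_{\chi,\Ocal}$ and $\Eis(\bar\omega_{\chi,r})=\iota^\ast E_{\chi,\Ocal}$ span the same $\Ocal_\q$-line, so $\Lcal_{\Eis,\q}=E_{\chi,\Ocal}\Ocal_\q$. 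Since for $h=1$ one has $\tfrac{1}{2\sqrt D}L^{\integ}(\chi\circ\N_{H/K},0)=2L^{\alg}(\chi,0)=G_2(L_\Ocal)$ (as recorded before Corollary~\ref{maincor}), Theorem~\ref{mainthm} says $G_2(L_\Ocal)E_{\chi,\Ocal}\in\cohomt^1(X_\Gamma;\Ocal_K)\subset\cohomt^1(X_\Gamma;\Ocal_\q)$, i.e. $G_2(L_\Ocal)\Ocal_\q\subset\Den(\Lcal_{\Eis,\q})$.

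\emph{Lower bound.} By \eqref{decompadel} and \eqref{isomoradelic}, $S_{K_\fin}=Y_\Gamma$, so Berger's lower bound on $\Den(\Lcal_{\Eis}(S_{K_\fin}))$ becomes a lower bound on $\Den(\Lcal_{\Eis})$, hence on $\Den(\Lcal_{\Eis,\q})$ after localizing; the possible mismatch between $\Gamma=\SL_2(\Ocal)$ and $G(\Q)\cap K_\fin$ flagged in Section~\ref{bergerrel} only changes things by a factor of $2$, invisible at $\q\nmid 2$. It then remains to match the $L$-value appearing in Berger's bound with $G_2(L_\Ocal)$ up to a $\q$-unit. Proposition~\ref{compeis} handles the geometric half: Berger's Eisenstein class $\Eis(\Psi,s)$ equals $\tfrac12 E_\chi(u,s)$, so the integral structure he puts on $H^1_{\Eis}$ agrees with ours up to powers of $2$, hence exactly after localizing at $\q\nmid 2$. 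For the arithmetic half one unwinds his normalization of the special value in terms of $L(\chi,0)$ and invokes $L^{\alg}(\chi,0)=\Omega^{-2}L(\chi,0)=\tfrac12 G_2(L_\Ocal)$; all intervening constants are powers of $2$ and of $\sqrt D$, units in $\Ocal_\q$ since $\q\nmid 2D$. This gives $\Den(\Lcal_{\Eis,\q})\subset G_2(L_\Ocal)\Ocal_\q$, and combined with the upper bound, the asserted equality $\Den(\Lcal_{\Eis,\q})=G_2(L_\Ocal)\Ocal_\q$.

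\emph{Main obstacle.} The delicate step is the lower bound: one must translate Berger's adelic statement — his choice of complex periods, his normalization of the Hecke $L$-function, and his integral lattice in degree-one group cohomology — faithfully into the classical picture used here, and verify that the only primes lost in the dictionary divide $2D$. Proposition~\ref{compeis} carries out the comparison of Eisenstein classes; what remains is the arithmetic bookkeeping of the $L$-value at the localized level, and this is precisely where the hypothesis $\q\nmid 2D$ is spent, which is why the corollary is stated only for such $\q$.
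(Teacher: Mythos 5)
Your proposal is correct and follows essentially the same route as the paper: the upper bound is Theorem \ref{mainthm} specialized to $h=1$, where $\tfrac{1}{2\sqrt D}L^{\integ}(\chi\circ\N_{H/K},0)=G_2(L_\Ocal)$, and the lower bound is Berger's Theorem 29 transported to the classical picture via $S_{K_\fin}=Y_\Gamma$ and Proposition \ref{compeis}, with all discrepancies being powers of $2$ and $\sqrt D$ that are units since $\q\nmid 2D$. The extra verification you give that $\Lcal_{\Eis,\q}=E_{\chi,\Ocal}\Ocal_\q$ (via $\iota^\ast E_{\chi,\Ocal}=-E_{\chi,\Ocal}$) is a detail the paper builds into its definition of $\Lcal_{\Eis}$ rather than re-deriving, but it does not change the argument.
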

\begin{proof} Since $\q$ is coprime to $2$ and $D$ we have 
\begin{align}
    L^{\alg}(\chi \circ \N_{H/K},0)\Ocal_{\q}=L^{\integ}(\chi \circ \N_{H/K},0)\Ocal_{\q}=G_2(L_\Ocal)\Ocal_{\q}.
\end{align}
From Theorem \ref{mainthm} we know that
    \begin{align}
        G_2(L_\Ocal)\Ocal_{\q} \subset \Den(\Lcal_{\Eis})\Ocal_{\q}.
    \end{align}
On the other hand, by \cite[Theorem.~29]{berger1} there is a $\Psi^0 \in V_\phi$ such that
\begin{align}
\Den(\Eis(\Psi^0)) \subset L^{\alg}(\chi ,0)\Ocal_{\q}=G_2(L_\Ocal) \Ocal_\q.    
\end{align}
The conditions of Berger's theorem are satisfied since $\chi$ is unramified and of type\footnote{It correspond to the type $(-2,0)$ in our notation when viewed as a character on the group of fractional ideals.} $\chi_\8(z)=z^2$ by the final remark in \cite[p.~467]{berger1}. By Proposition \ref{compeis}, we have $\Lcal_{\Eis,\q}=\Eis(\Psi^0) \Ocal_{\q}$ hence
\[\Den(\Lcal_{\Eis,\q})=\Den(\Eis(\Psi^0)).\]
\end{proof}
\begin{rmk}\label{indications}
When the class number is greater than one, then the space $S_{K_\fin}$ is no longer connected and is a disjoint union
\begin{align}
    S_{K_\fin}=\bigsqcup_{[\bfrak] \in H_K} \Gamma(\bfrak)\backslash \HH_3,
\end{align}
where $H_K$ can be represented by ideal classes. Hence our result (Theorem \ref{mainthm}) only gives a bound on the denominator of the connected component corresponding to $\bfrak=\Ocal$:
\begin{align} \label{equality0}
    L^{\integ}(\chi \circ \N_{H/K},0)\Ocal_{\q} \subset \Den(\Lcal_{\Eis}(\Ocal))\Ocal_{\q}, 
\end{align}
where $\Lcal_{\Eis}(\bfrak)$ (respectively $\Lcal_{\Eis}(S_{K_\fin})$) is the integral structure on $\Gamma(\bfrak)\backslash \HH_3$ (respectively on $S_{K_\fin}$). In particular, since $\Lcal_{\Eis}(S_{K_\fin})=\bigoplus_{[\bfrak]} \Lcal_{\Eis}(\bfrak)$, we have that
\begin{align}\label{intersection}
 \Den(\Lcal_{\Eis}(S_{K_\fin}))\Ocal_{\q}=\bigcap \Den(\Lcal_{\Eis}(\bfrak))\Ocal_{\q}.
\end{align}
To find the denominator for the connected component associated to the fractional ideal $\bfrak$, we need to consider the Eisenstein series
\begin{align}
    \Ehat_{\chi,\afrak,\bfrak}(u,s)= \chi(\afrak)^{-1}\N(\afrak)^s\sum_{(c,d)\in \afrak \times \afrak\bfrak} \eta(u,c,d,s) \in \Omega^1(\HH_3)^{\Gamma(\bfrak)}.
\end{align}
associated to the cusp $\afrak$. Similary we define $E_{\chi,\afrak,\bfrak}(u,s)$, which is related to $\Ehat_{\chi,\afrak,\bfrak}(u,s)$ by the matrix $M_\chi$ exactly as in \eqref{equality}. We can then define a cocycle
$\Phi_{\afrak,\bfrak} \colon \Gamma(\bfrak)\longrightarrow \C$ by \begin{align}
    \Phi_{\afrak,\bfrak}(\gamma) \coloneqq \chi(\afrak) \int_{u_0}^{\gamma u_0} \Ehat_{\chi,\afrak,\bfrak},
\end{align}
that specializes to Sczech's cocycle when $\bfrak=\Ocal$. We do not know how prove that $\Phi_{\afrak,\bfrak}$ is integral. If we were able to show the integrality at $\q$, we would get the lower bound
\begin{align}
    L^{\integ}(\chi \circ \N_{H/K},0)\Ocal_{\q} \subset \Den(\Lcal_{\Eis}(\bfrak))\Ocal_{\q}
\end{align}
on each of the connected components of $S_{K_\fin}$ by the same argument as in the proof of Theorem \ref{mainthm}. By \eqref{intersection} we would get
\begin{align}
    L^{\integ}(\chi \circ \N_{H/K},0)\Ocal_{\q} \subset \Den(\Lcal_{\Eis}(S_{K_\fin}))\Ocal_{\q}.
\end{align}
After applying Berger's result one should then get the equality
\begin{align}
    L^{\integ}(\chi \circ \N_{H/K},0)\Ocal_{\q} = \Den(\Lcal_{\Eis}(S_{K_\fin}))\Ocal_{\q}
\end{align}
for imaginary quadratic fields of arbitrary class numbers.
\end{rmk}

\printbibliography
\end{document}